\theoremstyle{plain} 
\newtheorem{thm}{Theorem}[section]
\newtheorem{prop}[thm]{Proposition}
\newtheorem{cor}[thm]{Corollary}
\theoremstyle{definition}
\newtheorem{obs}[thm]{Observation}
\newtheorem{eg}[thm]{Example}
\newtheorem{ques}[thm]{Question}
\newtheorem{rmk}[thm]{Remark}
\numberwithin{equation}{section}
\newcommand{\fm}{\mathfrak{m}}
\newcommand{\fp}{\mathfrak{p}}
\newcommand{\CC}{\mathbb{C}}
\newcommand{\ZZ}{\mathbb{Z}}
\newcommand{\NN}{\mathbb{N}}
\newtheorem{chunk}[thm]{\hspace*{-1.065ex}\bf}
\newcommand{\up}[1]{{{}^{#1}\!}}
 \DeclareMathOperator{\Ass}{Ass}
\def\S{\operatorname{\mathsf{S}}}
\def\H{\operatorname{\mathsf{H-dim}}}
\def\HH{\operatorname{\mathsf{H}}}
\def\CI{\operatorname{\mathsf{CI-dim}}}
\def\G-dim{\operatorname{\mathsf{G-dim}}}
\def\cx{\operatorname{\mathsf{cx}}}
\def\pd{\operatorname{\mathsf{pd}}}
\def\id{\operatorname{\mathsf{id}}}
\def\Gid{\operatorname{\mathsf{Gid}}}
\def\cpd{\operatorname{C-\mathsf{pd}}}
\def\cid{\operatorname{C-\mathsf{id}}}
\def\syz{\operatorname{\mathsf{syz}}}
\def\Gcid{\operatorname{\mathsf{C-Gid}}}
\def\CMid{\operatorname{\mathsf{CMid}}}
\def\Gxcid{\operatorname{\mathsf{\overline{C}-Gid}}}
\def\Tr{\mathsf{Tr}\hspace{0.01in}}
\def\depth{\operatorname{\mathsf{depth}}}
\def\Ext{\operatorname{\mathsf{Ext}}}
\def\Hom{\operatorname{\mathsf{Hom}}}
 \DeclareMathOperator{\gr}{grade}
\DeclareMathOperator{\Supp}{Supp}
\DeclareMathOperator{\Spec}{Spec}
\def\Tor{\operatorname{\mathsf{Tor}}}
\newcommand{\tensor}{\otimes^{\bf L}}
\DeclareMathOperator{\Rhom}{\textbf{R}Hom}
\def\urltilda{\kern -.15em\lower .7ex\hbox{\~{}}\kern .04em}
\def\urldot{\kern -.10em.\kern -.10em}\def\urlhttp{http\kern -.10em\lower -.1ex
\hbox{:}\kern -.12em\lower 0ex\hbox{/}\kern -.18em\lower 0ex\hbox{/}}
\begin{document}

\title[Rigid modules]
{Homological dimensions of rigid modules} 

\author[Celikbas, Gheibi, Rahro Zargar, Sadeghi]
{Olgur Celikbas, Mohsen Gheibi, \\  Majid Rahro Zargar and Arash Sadeghi }
\address{Olgur Celikbas, Department of Mathematics, University of Connecticut, Storrs, CT 06269, USA}
\email{olgur.celikbas@uconn.edu}

\address{Mohsen Gheibi, Department of Mathematics, University of Nebraska–-Lincoln, Lincoln, NE 68588, USA}
\email{mohsen.gheibi@gmail.com}

\address{Majid Rahro Zargar, School of Mathematics, Institute for Research in Fundamental Sciences (IPM), P.O. Box: 19395-5746, Tehran, Iran.}
\email{zargar9077@gmail.com}

\address{Arash Sadeghi, School of Mathematics, Institute for Research in Fundamental Sciences (IPM), P.O. Box: 19395-5746, Tehran, Iran.}
\email{sadeghiarash61@gmail.com}

\thanks{2010 {\em Mathematics Subject Classification.} Primary 13D07; Secondary 13D05, 13H10}
\thanks{{\em Key words and phrases.} Auslander's transpose, Frobenius endomorphism, Gorenstein injective dimension, semidualizing modules, test modules, Tor-rigidity, vanishing of Ext and Tor}

\date{\today}
\maketitle{}

\begin{abstract}
We obtain various characterizations of commutative Noetherian local rings $(R, \fm)$ in terms of homological dimensions of certain finitely generated modules. 
For example, we establish that $R$ is Gorenstein if the Gorenstein injective dimension of the maximal ideal $\fm$ of $R$ is finite. Furthermore we prove that $R$ must be regular if a single $\Ext_{R}^{n}(I,J)$ vanishes for some integrally closed $\fm$-primary ideals $I$ and $J$ of $R$ and for some integer $n\geq \dim(R)$.  Along the way we observe that local rings that admit maximal Cohen-Macaulay Tor-rigid modules are Cohen-Macaulay.
\end{abstract}

\setcounter{tocdepth}{1}
\tableofcontents

\section{Introduction} Throughout $R$ is a commutative Noetherian local ring with unique maximal ideal $\fm$ and residue field $k$, and all modules over $R$ are assumed to be finitely generated.

It is well-known that the projective dimension of an $R$-module $M$ is determined by the vanishing of $\Ext_{R}^{n}(M,k)$, i.e., if $\Ext_{R}^{n}(M,k)=0$ for some positive integer $n$, then $\pd(M)\leq n-1$. In fact $\pd(M)=\sup\{i\in \ZZ: \Ext^i_R(M,k)\neq 0\}$. Furthermore
it follows from classical theorems of Auslander, Buchsbaum and Serre that finiteness of the projective or the injective dimension of the residue field $k$ characterizes the ring itself: $R$ is regular if $\pd(k)<\infty$ or $\id(k)<\infty$; see \cite[2.2.7 and 3.1.26]{BH}.

The main task in this paper is to introduce a class of modules, called \emph{rigid-test} modules, that replace the residue field $k$ in the aforementioned classical results; see (\ref{testrigid}) for the definition. A special case of our main result, Theorem \ref{lemImp}, can be summarized as follows; see also Corollaries \ref{corofthm-1} and \ref{cpcidcor}.

\begin{thm} \label{thmint} Let $(R, \fm)$ be a local ring and let $M$ and $N$ be a nonzero $R$-modules. Assume $N$ is a rigid-test module (e.g., $N=k$).
\begin{enumerate}[\rm(i)]
\item If $\Ext^n_{R}(M,N)=0$ for some $n\geq \depth(N)$, then $\pd(M)\leq n-1$.
\item $\pd(M)=\sup\{i\in \ZZ:  \Ext^i_{R}(M,N)\neq 0\}$.
\item If $\pd(N)<\infty$ or $\id(N)<\infty$, then $R$ is regular.
\end{enumerate}
\end{thm}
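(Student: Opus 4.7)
The plan is to exploit the two ingredients packaged into the notion of a rigid-test module separately: the \emph{test} part, which upgrades a single $\Ext$-vanishing into finiteness of $\pd(M)$, and the \emph{rigid} part, which propagates a single $\Ext$-vanishing to all higher indices. The hypothesis $n \geq \depth(N)$ is the natural cutoff because, by Ischebeck's theorem, $\Ext^i_R(M,N)=0$ automatically for $i < \depth(N)-\depth(M)$, so any meaningful testing must take place above depth.

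For (i), I would first invoke the test property of $N$ on the single hypothesis $\Ext^n_R(M,N)=0$ (with $n \geq \depth(N)$) to conclude $p:=\pd(M)<\infty$. A standard computation with a minimal free resolution of $M$, using that $N$ is nonzero over a local ring, shows $\Ext^p_R(M,N)\neq 0$. The rigidity property then forces the vanishing at index $n$ to propagate to every $i\geq n$. Were $p\geq n$, this would yield $\Ext^p_R(M,N)=0$, contradicting non-vanishing; hence $p\leq n-1$.

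For (ii), one inequality is classical: $\Ext^i_R(M,N)=0$ for $i>\pd(M)$ when $\pd(M)$ is finite, and the top $\Ext$ is nonzero as already noted, so the supremum equals $\pd(M)$ in the finite case. For the reverse, suppose the supremum were a finite number $s$. Then $\Ext^n_R(M,N)=0$ for every $n>s$; choosing such an $n$ that also satisfies $n\geq\depth(N)$ and applying (i) yields $\pd(M)\leq n-1<\infty$, completing the equality.

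For (iii), the strategy is to apply (i) with $M=k$, once a single vanishing $\Ext^n_R(k,N)=0$ with $n\geq\depth(N)$ is exhibited. The case $\id(N)<\infty$ is immediate: $\Ext^n_R(-,N)=0$ for all $n>\id(N)$, so choose $n$ exceeding both $\id(N)$ and $\depth(N)$, apply (i) with $M=k$, and conclude $\pd(k)<\infty$, hence $R$ is regular by Auslander--Buchsbaum--Serre. The case $\pd(N)<\infty$ is more delicate, since finite projective dimension of $N$ does not directly bound $\Ext^i_R(-,N)$. Here the natural route is a $\Tor$-analogue of (i): a finite projective resolution of $N$ gives $\Tor_n^R(k,N)=0$ for $n>\pd(N)$, and a $\Tor$-version of the rigid-test property (either built into the definition, or obtained by passing through the Auslander transpose $\Tr N$, which the paper advertises as a central tool) forces $\pd(k)<\infty$, so $R$ is regular. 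The main obstacle I anticipate is this last step: making the $\pd(N)<\infty$ half of (iii) rigorous requires a $\Tor$-rigid-test counterpart of (i) that is not formally contained in the Ext-only statement, so the cleanest resolution is to either include $\Tor$-testing in the definition or to reinterpret Ext-vanishing against $N$ as Tor-vanishing against $\Tr N$, where the projective dimension hypothesis becomes directly usable.
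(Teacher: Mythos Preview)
Your argument for (i) has a genuine gap: both the test property and Tor-rigidity of $N$ are \emph{Tor} conditions, not Ext conditions. The test property says that $\Tor_i^R(N,X)=0$ for all $i\gg 0$ forces $\pd(X)<\infty$; it asserts nothing about a vanishing $\Ext^n_R(M,N)$. Likewise, Tor-rigidity propagates vanishing of $\Tor_i^R(-,N)$ upward, not of $\Ext^i_R(-,N)$. So when you write ``invoke the test property of $N$ on the single hypothesis $\Ext^n_R(M,N)=0$ to conclude $\pd(M)<\infty$'' and then ``the rigidity property forces the vanishing at index $n$ to propagate to every $i\geq n$,'' you are assuming Ext-analogues of the defining properties that are not available and are in fact precisely what (i) is asserting. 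The statement you are trying to prove \emph{is} the bridge from Tor-hypotheses on $N$ to Ext-conclusions about $M$; you cannot cross that bridge by invoking it.

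The paper's route is to convert Ext to Tor via Auslander's transpose. The exact sequence
\[
\Tor_2^R(\mathcal{T}_{n+1}M,N)\to\Ext^n_R(M,R)\otimes_RN\to\Ext^n_R(M,N)\twoheadrightarrow\Tor_1^R(\mathcal{T}_{n+1}M,N),
\]
with $\mathcal{T}_{n+1}M=\Tr\Omega^nM$, shows that $\Ext^n_R(M,N)=0$ forces $\Tor_1^R(\mathcal{T}_{n+1}M,N)=0$. Now the rigid-test property of $N$ applies---but to $\mathcal{T}_{n+1}M$, not to $M$. Auslander's depth formula then gives $\pd(\mathcal{T}_{n+1}M)\leq\depth(N)\leq n$, and a separate argument (using that $\Omega^nM$ satisfies $(\widetilde{S}_n)$, hence $\Ext^i_R(\mathcal{T}_{n+1}M,R)=0$ for $i=1,\dots,n$) forces $\pd(\mathcal{T}_{n+1}M)=0$, so $\Omega^nM$ is free and $\pd(M)\leq n-1$. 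You do mention the transpose in your last paragraph, but only as a fallback for (iii) and applied to $N$; the point is that it is the central device for (i), applied to syzygies of $M$.

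Your reductions for (ii) and for the $\id(N)<\infty$ half of (iii) are correct once (i) is established, and they match the paper's. For the $\pd(N)<\infty$ half of (iii) there is no obstacle: the ``Tor-version of the rigid-test property'' you hesitate over is literally the definition. Since $\pd(N)<\infty$ gives $\Tor_n^R(k,N)=0$ for all $n>\pd(N)$, the test property of $N$ immediately yields $\pd(k)<\infty$, so $R$ is regular.
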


To motivate our approach, let us remark Corso, Huneke, Katz and Vascencelos \cite[3.3]{CHKV} established that integrally closed $\fm$-primary ideals are rigid-test modules; see (\ref{inttest}). Thus, for such ideals $I$ and $J$, if $\Ext_{R}^{n}(I,J)=0$ for some $n\geq \dim(R)$, then it follows from Theorem \ref{thmint} that $\pd(I)<\infty$, and hence $R$ is regular;  see (\ref{testrigid}). Furthermore, if $R$ is a two-dimensional complete normal local rational singularity with an algebraically closed residue field, by a result of Lipman \cite[7.1]{Lipman}, $I^i$ is integrally closed, and hence rigid-test, for all $i\geq 1$. Consequently Theorem \ref{thmint} yields the following result; see also (\ref{dis3}).

\begin{cor} \label{corthmint} Let $(R, \fm)$ be a two-dimensional complete normal local domain with an algebraically closed residue field. Assume that $R$ has a rational singularity. If $\Ext^n_{R}(I^r, J^s)=0$ for some integrally closed $\fm$-primary ideals $I$ and $J$ of $R$, and for some positive integers $n$, $r$, $s$, then $R$ is regular.
\end{cor}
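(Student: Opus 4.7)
The plan is to reduce the corollary to Theorem \ref{thmint} by exhibiting both $I^r$ and $J^s$ as rigid-test modules and then applying parts (i) and (iii) of that theorem in succession.

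First I would record that a two-dimensional normal local domain is automatically Cohen--Macaulay, so $\depth(R) = 2$. By Lipman's theorem, quoted just before the corollary, the hypotheses on $R$ (complete normal rational singularity with algebraically closed residue field) guarantee that every power of an integrally closed $\fm$-primary ideal is again integrally closed. Hence both $I^r$ and $J^s$ are integrally closed $\fm$-primary ideals, and invoking the Corso--Huneke--Katz--Vasconcelos result cited in \ref{inttest}, both are rigid-test $R$-modules.

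Next I would verify the depth hypothesis needed to apply Theorem \ref{thmint}(i). From the short exact sequence $0 \to J^s \to R \to R/J^s \to 0$, together with the fact that $R/J^s$ has finite length and hence depth zero, the depth lemma combined with $\depth(R) = 2$ forces $\depth(J^s) = 1$. In particular $n \geq 1 = \depth(J^s)$, so Theorem \ref{thmint}(i) applied with $M = I^r$ and $N = J^s$ yields $\pd(I^r) \leq n - 1 < \infty$.

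Finally, since $I^r$ is itself a rigid-test module and now has finite projective dimension, Theorem \ref{thmint}(iii) forces $R$ to be regular. The whole argument is a direct assembly of Lipman's theorem, the Corso--Huneke--Katz--Vasconcelos test property, and Theorem \ref{thmint}; I do not anticipate any substantive obstacle. The only point requiring care is the routine verification that $\depth(J^s) = 1$, and the essential idea is that the rigid-test property is symmetric in the two arguments, so once one entry (here $J^s$) forces $\pd$-finiteness of the other (here $I^r$), one can turn around and use the second entry as the test module again to conclude regularity.
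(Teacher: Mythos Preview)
Your proposal is correct and follows essentially the same approach as the paper: Lipman's theorem gives that $I^r$ and $J^s$ remain integrally closed, Corso--Huneke--Katz--Vasconcelos (\ref{inttest}) makes both rigid-test, and Theorem~\ref{thmint} finishes the argument. Your explicit verification that $\depth(J^s)=1$ is a useful detail the paper leaves implicit; it is precisely what justifies allowing \emph{any} positive $n$ rather than only $n\geq\dim(R)$ as the discussion preceding the corollary might suggest.
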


Examples of two-dimensional rational singularities include hypersurface rings (rational double points), such as $R=\CC[\![x,y,z]\!]/(x^2+y^2+z^2)$.

Our argument has applications in several directions. We use Theorem \ref{thmint} and deduce the following characterization of regularity from a beautiful result of Avramov, Hochster, Iyengar and Yao \cite[1.1]{AHIY}; see (\ref{AHiy}) and Corollary \ref{CorintrooGen}.

\begin{cor} \label{Corintroo} Let $R$ be a complete local ring of prime characteristic $p$ with a perfect residue field, and let $M$ and $N$ be nonzero $R$-modules with $\Ext^i_{R}(\up{\varphi^n}M, N)=0$ for some $i\geq \depth(N)$ and $n\geq 1$. If $N$ is a rigid-test module, then $R$ is regular.
\end{cor}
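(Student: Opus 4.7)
\medskip

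\noindent\textbf{Proof plan.} The plan is to split the argument into two steps that are already prepared for us by the paper: first use Theorem \ref{thmint} to force $\up{\varphi^n}M$ to have finite projective dimension, and then feed this into the theorem of Avramov--Hochster--Iyengar--Yao recalled in (\ref{AHiy}) to conclude regularity of $R$.

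\medskip

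\noindent For the first step, I would observe that the hypotheses on $N$ are exactly those of Theorem \ref{thmint}(i): $N$ is a nonzero rigid-test $R$-module, and there exists an integer $i\geq \depth(N)$ with $\Ext^i_R(\up{\varphi^n}M,N)=0$. Hence Theorem \ref{thmint}(i) applies to the pair $(\up{\varphi^n}M,N)$ and yields
\[
\pd_R\bigl(\up{\varphi^n}M\bigr)\leq i-1<\infty.
\]
To legitimately invoke Theorem \ref{thmint} here I must of course know that $\up{\varphi^n}M$ is a finitely generated $R$-module; this is exactly where the standing hypotheses (complete, prime characteristic $p$, perfect residue field) pay off, since $R$ is then $F$-finite and so the Frobenius pushforward of any finitely generated $R$-module remains finitely generated. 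Note also that $\up{\varphi^n}M$ is nonzero because $M$ is nonzero.

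\medskip

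\noindent For the second step, since $M$ is nonzero, its Frobenius pushforward $\up{\varphi^n}M$ is a nonzero finitely generated $R$-module of finite projective dimension. The result \ref{AHiy} of Avramov--Hochster--Iyengar--Yao asserts, precisely in the setting of a local ring of prime characteristic with perfect residue field, that if $\up{\varphi^n}M$ has finite projective (equivalently, flat) dimension for some nonzero finitely generated $M$ and some $n\geq 1$, then $R$ is regular. Applying this gives the conclusion. The only non-routine ingredient is the AHIY theorem itself, which is being cited from the literature; everything else is just a direct application of Theorem \ref{thmint}, so I do not anticipate any real obstacle beyond verifying that the finiteness and nonvanishing hypotheses of \ref{AHiy} are in force, which is automatic in this characteristic-$p$ setting.
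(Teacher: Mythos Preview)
Your proof is correct and follows essentially the same route as the paper: reduce to $F$-finiteness using completeness and perfectness of the residue field, apply Theorem~\ref{thmint}(i) (equivalently, Corollary~\ref{corofthm-1}(ii)) to get $\pd_R(\up{\varphi^n}M)<\infty$, and then invoke the Avramov--Hochster--Iyengar--Yao theorem (\ref{AHiy}). One small inaccuracy: the AHIY result (\ref{AHiy}) does not itself require a perfect residue field; that hypothesis is used only to guarantee $F$-finiteness, as you correctly note earlier in your argument.
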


Here $\up{\varphi^n}M$ is the $R$-module $M$ with the $R$-action given by the nth iterate of the Frobenius endomorhism $\varphi$; see (\ref{rmk0}). As an example, we note that nonzero modules of infinite projective dimension are rigid-test modules over $R=\mathbb{F}_{p}[\![x,y,z]\!]/(xy-z^2)$, with $p$ is an odd prime; thus Corollary \ref{Corintroo} implies that $\Ext^{n+1}_{R}(\up{\varphi^e}M, \up{\varphi^r}N)\neq 0$ for all positive integers $e, n, r$, and for all nonzero $R$-modules $M$, $N$; see (\ref{onGolod}) and (\ref{Long's result}).

Theorem \ref{thmint} determines the Gorensteinness of $R$ via the \emph{Gorenstein injective dimension}, a refinement of the usual injective dimension introduced by Enochs and Jenda \cite{EJ}. We prove in  Corollary \ref{corsonuc1} that $R$ is Gorenstein if the Gorenstein injective dimension $\Gid(\fm)$ of the maximal ideal $\fm$ of $R$ is finite. This, combined with the results in the literature, seems to give a fairly complete picture: $R$ is Gorenstein if and only if at least one of the dimensions $\Gid(\fm)$, $\Gid(R)$ or $\Gid(k)$ is finite; see also (\ref{Gidremark}) and Avramov's remark following Question \ref{soru}.

A rigid-test module is, by definition, Tor-rigid \cite{Au} and a test module (for projectivity) in the sense of \cite{CDtest}; see (\ref{testrigid}). Among those already discussed, there are quite a few motivations to study test and rigid-test modules: it was established in \cite[3.7]{CDtest} that if the dualizing module of a Cohen-Macaulay ring is a test module, then there are no nonfree totally reflexive modules. Proposition \ref{corgen} extends \cite[3.7]{CDtest} and establishes that there are no nonfree totally reflexive modules if there exists a nonzero test module over $R$ -- not necessarily maximal Cohen-Macaulay -- of finite injective dimension. Another motivation for us to introduce rigid-test modules comes from the fact that the hypothesis -- $N$ is a test module -- in Theorem \ref{thmint} cannot be dropped in general. Tor-rigidity has remarkable consequences \cite{Au, Da1}, but if $N$ is a Tor-rigid module, which is not a test module, i.e., not a rigid-test module, then the vanishing of $\Ext^n_{R}(M,N)$, even for all $n\gg 0$, does not necessarily force $M$ to have finite projective dimension in general; see Theorem \ref{thmint} and Example \ref{eg22}.

Cohen-Macaulay local rings admit maximal Cohen-Macaulay Tor-rigid modules, e.g., a high syzygy of the residue field is such an example. On the other hand, examples of non Cohen-Macaulay rings that admit maximal Cohen-Macaulay modules are easy to find; see (\ref{MCMexamples}). In proving our main result, we discover the following, which came as a surprise to us; see (\ref{rigidcons}).

\begin{obs} If a local ring $R$ admits a (finitely generated) maximal Cohen-Macaulay  Tor-rigid module, then $R$ is Cohen-Macaulay. 
\end{obs}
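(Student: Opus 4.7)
The plan is to argue by induction on $d = \dim R$. The case $d = 0$ is trivial, so assume $d \geq 1$ and that the observation holds in dimension $d - 1$.

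I would first produce $x \in \fm$ that is both $M$-regular and a parameter of $R$. Since $M$ is maximal Cohen-Macaulay, every $\fp \in \Ass_R(M)$ satisfies $d = \depth_R M \leq \dim R/\fp \leq d$, so $\fp$ is a minimal prime of $R$; thus $\Ass_R(M)$ is contained in the set of minimal primes of $R$. As $d \geq 1$, prime avoidance yields an $x \in \fm$ lying outside every minimal prime of $R$, and this $x$ is automatically $M$-regular and a parameter of $R$ with $\dim R/xR = d - 1$. From the short exact sequence $0 \to xR \to R \to R/xR \to 0$ and the identification $xR \cong R/(0 :_R x)$, the induced map $M \otimes_R xR \to M$ becomes multiplication by $x$, whose kernel is trivial because $x$ is $M$-regular; hence $\Tor_1^R(M, R/xR) = 0$. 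Tor-rigidity of $M$ then forces $\Tor_i^R(M, R/xR) = 0$ for all $i \geq 1$.

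I would then transfer the hypotheses to $R/xR$. Because $\Tor_i^R(M, R/xR) = 0$ for $i \geq 1$, tensoring any free resolution $F_\bullet \twoheadrightarrow M$ over $R$ with $R/xR$ produces a free $R/xR$-resolution $F_\bullet / xF_\bullet \twoheadrightarrow M/xM$. This yields the change-of-rings isomorphism
\[
\Tor_i^{R/xR}(M/xM, N) \,\cong\, \Tor_i^R(M, N) \quad \text{for every } R/xR\text{-module } N,
\]
so Tor-rigidity of $M$ over $R$ descends to Tor-rigidity of $M/xM$ over $R/xR$. Combined with $\depth_{R/xR}(M/xM) = \depth_R(M/xM) = d - 1 = \dim R/xR$, the module $M/xM$ is a maximal Cohen-Macaulay Tor-rigid module over $R/xR$; the inductive hypothesis then gives that $R/xR$ is Cohen-Macaulay, so $\depth R/xR = d - 1$.

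Finally I would invoke the Depth Formula in its Tor-rigid form, valid whenever $M$ is Tor-rigid and $\Tor_i^R(M, N) = 0$ for all $i \geq 1$. Applied to $N = R/xR$, it yields
\[
\depth R + \depth(M/xM) \,=\, \depth M + \depth(R/xR),
\]
i.e., $\depth R + (d - 1) = d + (d - 1)$, so $\depth R = d$ and $R$ is Cohen-Macaulay. The main obstacle is precisely this last ingredient: Auslander's classical depth formula requires $\pd M < \infty$, which typically fails for an MCM module over a non-Cohen-Macaulay ring, so one needs the Tor-rigid extension of the Depth Formula — presumably recorded as the paper's separately named lemma — to convert Tor-vanishing back into a numerical relation among depths.
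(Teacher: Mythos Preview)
Your reduction to $R/xR$ is correct and the change-of-rings argument showing that $M/xM$ remains Tor-rigid over $R/xR$ is clean. The gap is the final step: the ``Depth Formula in its Tor-rigid form'' you invoke does not exist in the generality you need, and the paper does not supply it. Auslander's depth formula and all of its known extensions (to finite complete-intersection dimension, or to finite Gorenstein dimension over AB rings as in \cite[5.3]{CJ}) require one of the modules to carry a finite homological dimension; Tor-rigidity together with $\Tor_{\geq 1}^R(M,N)=0$ alone is not known to yield the identity $\depth R+\depth(M\otimes_R N)=\depth M+\depth N$. Your induction therefore cannot close: knowing that $R/xR$ is Cohen--Macaulay does not by itself force $x$ to be $R$-regular, and that is precisely what you need. (Concretely, after your Tor-rigidity step one has $\Tor_i^R(M,\ann_R(x))=0$ for all $i\geq 1$, but nothing so far rules out $\ann_R(x)\neq 0$.)

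The paper takes two entirely different routes, neither inductive and neither using a depth formula. The first (Corollary~\ref{rigidcons}, via Proposition~\ref{pr}) runs through the Auslander--Bridger transpose: for any Tor-rigid $N$ one shows $\gr(M,N)\leq\gr(M)$ for all $M$, and specializing to $M=k$ gives $\depth N\leq\depth R$ in one stroke. The second, due to Tchernev, chooses a full system of parameters $x_1,\dots,x_d$ that is $M$-regular and runs the spectral sequence $\Tor^R_p(M,\HH_q(\bsx;R))\Rightarrow\HH_{p+q}(\bsx;M)$; since the abutment vanishes in positive degrees and Tor-rigidity kills the bottom row, one peels off $\HH_q(\bsx;R)=0$ for $q\geq 1$ degree by degree, so $\bsx$ is $R$-regular. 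Tchernev's argument is in spirit the ``all-at-once'' version of what you attempted one parameter at a time; the spectral sequence is what replaces the missing depth formula.
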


We make various observations  in sections 3 and 4, and prove our main result, Theorem \ref{lemImp}, in section 5. Sections 6 and 7 are devoted to applications of our argument. We also collect examples of test and rigid-test modules from the literature in Appendix A; Appendix B colloborates a result of Iyama and Wemyss \cite{IW}.

\section{Definitions}

\begin{chunk} (\cite{Au}) \label{Torrigid} An $R$-module $M$ is said to be \emph{Tor-rigid} provided that the following holds for all $R$-modules $N$:
\begin{equation}\notag{}
\text{If } \Tor_{n}^{R}(M,N)=0 \text{ for some }  n\geq 1, \text{then } \Tor_{n+1}^{R}(M,N)=0.
\end{equation}
\end{chunk}
The notion of Tor-rigidity were initially used in the study of the Koszul complex; it was later formulated and analyzed for modules by Auslander; see \cite{Au}. An interesting result of Lichtenbaum \cite[Theorem 3]{Li} shows that modules over regular local rings, and those of finite projective dimension over hypersurfaces -- quotient of power series rings over fields -- are Tor-rigid.

\begin{chunk} (\cite[1.1]{CDtest}) \label{Test} An $R$-module $M$ is said to be a \emph{test module for projectivity} provided that the following holds for all $R$-modules $N$:
\begin{equation}\notag{}
\text{If } \pd(N)=\infty \text{, then} \Tor_{n}^{R}(M,N)\neq 0 \text{ for infinitely many integers } n.
\end{equation}
We will call a test module for projectivity simply a \emph{test module}.
\end{chunk}

Motivated by a question of Lichtenbaum \cite[page 226, question 4]{Li}, we define:

\begin{chunk} \label{testrigid}
A Tor-rigid test module is called a \emph{rigid-test module}. More precisely, $M$ is called a rigid-test module provided that the following holds for all $R$-modules $N$:
\begin{equation}\notag{}
\text{If }\Tor_{n}^{R}(M,N)=0 \text{ for some } n\geq 1 \text{, then } \Tor_{n+1}^{R}(M,N)=0 \text{ and} \pd(N)<\infty.
\end{equation}
\end{chunk}

Dao, Li and Miller \cite{DLC} defined \emph{strong-rigidity} to study Tor-rigidity of the Frobenius endomorphism over Gorenstein rings:

\begin{chunk} (\cite[2.1]{DLC}) \label{stronglyrigid}
An $R$-module $M$ is said to be \emph{strongly-rigid} provided that the following  holds for all $R$-modules $N$:
\begin{equation}\notag{}
\text{If } \Tor_{n}^{R}(M,N)=0 \text{ for some } n\geq 1 \text{, then } \pd(N)<\infty.
\end{equation}
\end{chunk}

It follows from the definition that rigid-test modules are strongly-rigid, but we do not know whether the converse is true in general. Most of our results work for strongly-rigid modules. However, to obtain the conclusion of Theorem \ref{thmint}(i) when $n\geq \depth(N)$, we need Tor-rigidity; see Theorem \ref{lemImp} and Corollary \ref{corofthm-1}. This leads us to pose the following question for further study.

\begin{ques} \label{qintro} Let $R$ be a local ring and let $M$ be an $R$-module. If $M$ is strongly-rigid, then must $M$ be a rigid-test module, or equivalently, must $M$ be Tor-rigid?
\end{ques}

We give some relations between the above definitions in a diagram form:

\vspace*{-9ex}

$$
{\small
\xymatrix@C=3em@R=3em{
& \\
\text{Tor-rigid} \ar@{=>}[r]|{\object@{|}}^-{\;\;\;\; (1)}  \ar@{=>}[d] <0.8ex>|{\object@{|}}^-{ \;(3)}  & \text{\;\;test} \ar@{=>}[l]<1.5ex>|{\object@{|}}^-{\;\;\;\; (2)}    \ar@{=>}[d]<1.5ex>|{\object@{/}}|{}^-{\; (6)}  &  \\ \text{rigid-test} \ar@{=>}[u]<0.8ex>|{\object@{}}^-{\;\;\;\; (4)} \ar@{=>}[r]<0.6ex>|{\object@{}}^-{(7)}
& \text{strongly-rigid} \ar@{=>}[l]<0.7ex>|{\object@{}}^-{?}  \ar@{=>}[u] |{\object@{}}|{}^-{\;\;\;\; (5)} }} \\
$$
\vspace{0.1in}

The implications in the diagram can be justified as follows:

\noindent (1) and (3): see Example \ref{eg22}.\\
\noindent (2) and (6): see Example \ref{eg33}.\\
\noindent (4), (5) and (7): these follow from the definitions; see (\ref{Torrigid}), (\ref{testrigid}) and (\ref{stronglyrigid}).

\section{Projective and injective dimensions via rigid modules} Let $R$ be a local ring. If $N$ is a nonzero rigid-test module over $R$, then the vanishing of $\Tor_{i}^R(N,N)$ is not mysterious at all: it follows from the definition -- unless $R$ is regular -- that $\Tor_{i}^R(N,N)\neq 0$ for all $i\geq 0$; see (\ref{testrigid}). Hence it seems interesting to consider the vanishing of $\Tor_{i}^{R}(M,N)$ when $N$ is a rigid-test module and $M$ is an arbitrary $R$-module. In particular we seek to find whether the vanishing of $\Tor_{i}^{R}(M,N)$ for all $i\gg 0$ yields the exact value of the projective dimension of $M$. Auslander remarked that, if $\depth(N)=0$ and $\pd(M)=s<\infty$, then $\Tor_{s}^{R}(M,N)\neq 0$; see \cite[1.1]{Au}. Therefore an immediate observation is:

\begin{chunk} \label{Auobs} If $R$ is a local ring and $N$ is a test module such that $\depth(N)=0$, then it follows that $\pd(M)=\sup\{i\in \ZZ: \Tor_i^R(M,N)\neq 0\}$; see (\ref{Test}).
\end{chunk}

A rigid-test module of positive depth does not necessarily detect the exact value of the projective dimension via the vanishing of Tor in general; cf. Theorem \ref{thmint}.

\begin{eg} \label{egg1} Let $R=k[\![x,y,z]\!]/(xy)$, $T=R/(x)$ and let $N=T\oplus  \Omega T=R/(x)\oplus R/(y)$. Then $\depth(N)=2$ and $N$ is Tor-rigid; see \cite[1.9]{Mu}. Moreover, since $\pd(N)=\infty$, it follows that $N$ is a rigid-test module; see (\ref{onGolod}). Setting $M=R/(z)$, we see that $1=\pd(M) \neq \sup \{ i \in \ZZ:  \Tor_{i}^{R}(M,N) \neq 0\}=0$.
\end{eg}

If $N$ is a rigid-test module that is not necessarily of depth zero, Proposition \ref{th3} can be useful to detect the projective dimension of $M$; see also Remark \ref{Jorrmrk}. In the following
$\syz(N)$ denotes the largest integer $n$ for which $N$ can be an $n$th syzygy module in a
minimal free resolution of an $R$-module. The assumption that $\depth(N)=\syz(N)$ in Proposition \ref{th3} holds, for example, when $N$ is reflexive and $N_{\fp}$ is free for all prime ideals $\fp$ of $R$ with  $\fp \neq \fm$; see \cite[3.9]{EG}.

\begin{prop}\label{th3} Let $R$ be a local ring and let $M$ and $N$ be nonzero $R$-modules. Assume $N$ is a rigid-test module and that $\syz(N)=\depth(N)\leq\pd(M)$. Then
$$\sup\{i\in \ZZ: \Tor_i^R(M,N)\neq0\}=\pd(M)-\depth(N).$$
\end{prop}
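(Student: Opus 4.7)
Set $d := \depth(N) = \syz(N)$ and $s := \pd(M)$; by hypothesis $d \leq s$. The plan is to split according to whether $s$ is finite, reducing the finite case to Auslander's observation that depth-zero modules detect the top $\Tor$.

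If $s = \infty$, then since $N$ is rigid-test (hence in particular a test module; see \ref{Test}), $\Tor^R_i(M,N) \neq 0$ for infinitely many $i$, so the displayed supremum equals $\infty = \pd(M) - \depth(N)$, matching the right-hand side under the convention $\infty - d = \infty$.

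Assume $s < \infty$. The Auslander--Buchsbaum formula gives $s \leq \depth(R)$. Using $\syz(N) = d$, choose $L$ without free summands so that $N \cong \Omega^d L$ in a minimal free resolution of $L$. Dimension-shifting through the short exact sequences $0 \to \Omega^{j+1} L \to F_j \to \Omega^j L \to 0$ produces isomorphisms
$$\Tor_i^R(M,N) \,\cong\, \Tor_{i+d}^R(M,L) \quad \text{for all } i \geq 1,$$
and these vanish whenever $i + d > s$. This already yields $\sup \leq \max(0, s-d)$; in the degenerate case $s = d$ the supremum equals $0$ because $M \otimes_R N$ is nonzero, matching $s - d$.

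For $s > d$ the remaining task is to exhibit a nonvanishing $\Tor$ at the top index, namely $\Tor_s^R(M,L) \neq 0$; via the isomorphism above this will give $\Tor_{s-d}^R(M,N) \neq 0$ and finish the argument. I would invoke Auslander's observation recalled in \ref{Auobs}: when $\pd(M) = s < \infty$ and $\depth(L) = 0$, the top $\Tor$ $\Tor_s^R(M,L)$ is automatically nonzero. The crux is therefore to verify $\depth(L) = 0$. Iterating the depth lemma along the minimal resolution $0 \to \Omega^d L \to F_{d-1} \to \cdots \to F_0 \to L \to 0$ yields $\depth(\Omega^d L) \geq \min(\depth(R), \depth(L) + d)$. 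Combined with $\depth(\Omega^d L) = d$ and $d \leq \depth(R)$, this forces $\min(\depth(R), \depth(L) + d) = d$, so either $\depth(L) = 0$ or $\depth(R) = d$; the second alternative is incompatible with $s > d$ since it would imply $s \leq \depth(R) = d$. Hence $\depth(L) = 0$, completing the proof. It is worth noting that Tor-rigidity of $N$ is not explicitly used in this plan---only the test-module half of the hypothesis enters, and only in the $s = \infty$ case.
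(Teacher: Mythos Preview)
Your argument is correct, and it takes a genuinely different route from the paper. The paper proves the lower bound $q\geq\pd(M)-\depth(N)$ by induction on $\depth(N)$: it picks a non-zerodivisor $x$ on $N$, observes that $N/xN$ is again rigid-test, and invokes the induction hypothesis through the long exact sequence in $\Tor$. For the upper bound $q\leq\pd(M)-\depth(N)$ the paper localizes at an associated prime of $\Tor^R_q(M,N)$ and applies Auslander's depth formula \cite[1.2]{Au}, then splits into cases according to whether $\depth_{R_\fp}(N_\fp)\geq\depth(N)$; only in the complementary case does the syzygy hypothesis enter. Your approach, by contrast, realizes $N\cong\Omega^d L$ directly from $\syz(N)\geq d$ and dimension-shifts to reduce everything to Auslander's observation (\ref{Auobs}) for the depth-zero module $L$; the upper bound is then immediate from $\pd(M)=s$, with no depth formula or localization needed, and the depth lemma combined with Auslander--Buchsbaum forces $\depth(L)=0$ in the relevant case $s>d$. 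Your method is more elementary and, as you note, makes transparent that Tor-rigidity is not actually needed---only the test-module property is invoked, and only when $\pd(M)=\infty$. The paper's argument, by comparison, threads the rigid-test hypothesis through the induction (to ensure $N/xN$ inherits it) and leans on the depth formula; your route sidesteps both.
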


\begin{proof}
We may assume $\pd(M)<\infty$; see (\ref{testrigid}). Set
$\pd(M)=n$, $\depth(N)=t$, and $q=\sup\{i\in \ZZ:
\Tor_i^R(M,N)\neq0\}$. We proceed by induction on $t$ and prove that $\Tor_i^R(M,N)\neq0$ for
all $i=0, \dots, n-t$. Notice, since $N$ is Tor-rigid, it is enough
to show that $\Tor_{n-t}^R(M,N)\neq 0$.

If $t=0$, then it follows from Auslander's remark that $\Tor_{n}^R(M,N)\neq 0$; see (\ref{Auobs}). Hence assume $t\geq 1$, and pick a non zero-divisor $x$ on $N$. Then one can see that there is a long exact sequence of the form:
\begin{equation}\notag{}
\cdots\rightarrow\Tor_{n-t+1}^R(M,N)\rightarrow\Tor_{n-t+1}^R(M,N/xN)\rightarrow\Tor_{n-t}^R(M,N)\rightarrow\cdots
\end{equation}
It is easy to see that $N/xN$ is a rigid-test over $R$; see \cite[2.2]{CDtest}.
Thus the induction hypothesis yields $\Tor_{n-t+1}^R(M,N/xN)\neq0$. Therefore
$\Tor_{n-t}^R(M,N)\neq0$. In particular we have that $q\geq n-t$.

Let $\fp\in\Ass(\Tor_q^R(M,N))$. Then the
depth formula \cite[ 1.2]{Au} implies that:
\begin{equation}\tag{\ref{th3}.1}
\pd(M_\fp)-\depth_{R_\fp}(N_\fp)=\depth(R_{\fp})-\depth_{R_{\fp}}(M_{\fp})-\depth_{R_\fp}(N_\fp)=q.
\end{equation}
If $\depth_{R_\fp}(N_\fp)\geq\depth(N)$, then it follows that:
$$q=\pd(M_\fp)-\depth_{R_\fp}(N_\fp)\leq\pd(M)-\depth(N)=n-t.$$
This shows $q=n-t$, and hence completes the proof.

Next suppose $\depth_{R_\fp}(N_\fp)<\depth(N)=t$. Since $\depth(N)=\syz(N)$, we know that $N$ is a $t$-th syzygy module. This implies that $\depth_{R_\fp}(N_\fp)\geq\min\{t,\depth R_\fp\}$; see \cite[1.3.7]{BH}. Hence
$\depth_{R_\fp}(N_\fp)\geq\depth R_\fp$. Therefore, by (\ref{th3}.1), we deduce that
$q=0$. Now the fact $\Tor_{n-t}^R(M,N)\neq0$ yields that $n-t=0$, i.e., $q=n-t$.
\end{proof}

\begin{rmk} \label{Jorrmrk}  Jorgensen \cite[2.2]{JAB} proved, if $M$ is a module over a local ring $R$ with $\pd(M)<\infty$, then $q^R(M,N)=\sup\{i\in \ZZ: \Tor_i^R(M,N)\neq0\}$ is equal to $\sup\{\pd_{R_{\fp}} (M_{\fp})-\depth_{R_{\fp}} (N_{\fp}):\fp\in \Supp(M\otimes_{R}N)  \}$.
Therefore, if $\pd(M)<\infty$, one can deduce from Jorgensen's result that
$q^R(M,N) \geq \pd(M)-\depth(N)$.

In case $\syz(N)=\depth(N)\leq\pd(M)<\infty$, Proposition \ref{th3} establishes the equality $q^R(M,N) = \pd(M)-\depth(N)$ without appealing to \cite[2.2]{JAB}. Notice, by Example \ref{egg1}, the hypothesis $\depth(N)\leq\pd(M)$ is required, but we do not know whether  the condition $\syz(N)=\depth(N)$ is essential.
\end{rmk}

If $(R, \fm, k)$ is a local ring and $N$ is a nonzero $R$-module, then the $k$-vector spaces $\Ext_{R}^n(k,N)$ are nonzero for all $n$, where $\depth(N)\leq n \leq \id(N)$; see \cite[Theorem 2]{R1}.
In other words, if $\Ext_R^n(k,N)=0$ for some $n\geq \depth(N)$, then $\id(N)<\infty$. Since $k$ is strongly-rigid, this leads us to pose the following question; see also (\ref{stronglyrigid}).

\begin{ques} \label{injque} Let $R$ be a local ring and let $M$ and $N$ be a nonzero $R$-modules. Assume $M$ is strongly-rigid and $\Ext_{R}^n(M,N)=0$ for some $n\geq \depth(N)$. Then must $N$ have finite injective dimension?
\end{ques}

In case $M$ is a test module (not necessarily strongly-rigid) and $R$ has a dualizing complex (i.e., $R$ is a homomorphic image of a Gorenstein ring), it follows from \cite[3.2]{CDtest} that $\id(N)<\infty$ if and only if $\Ext_{R}^i(M,N)$ vanishes for all $i\gg 0$. Here our aim is to examine the case where $M$ is strongly-rigid, and a single $\Ext_{R}^n(M,N)$ vanishes for some $n\geq \depth(N)$. In Proposition \ref{propid} we obtain a partial affirmative answer to Question \ref{injque} over Cohen-Macaulay rings. This, in particular, gives an affirmative answer when $R$ is Artinian; see Corollary \ref{corArt}.

\begin{prop} \label{propid} Let $(R, \fm)$ be a Cohen-Macaulay local ring with a dualizing module and let $M$ and $N$ be nonzero $R$-modules. Assume the following holds:
\begin{enumerate}[\rm(i)]
\item $\pd_{R_{\fp}}(M_{p})<\infty$ for all $p\in \Spec(R)-\{\fm\}$ (e.g., $R$ has an isolated singularity.)
\item $\Ext_R^j(M,N)=0$ for some $j\geq \dim(R)+1$.
\item $M$ is strongly-rigid.
\end{enumerate}
Then $\id(N)<\infty$.
\end{prop}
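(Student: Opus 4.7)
The plan is to convert the Ext-vanishing hypothesis into a $\Tor$-vanishing on a finitely generated module via the dualizing module $\omega$, and then apply strong-rigidity of $M$.

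First, replace $N$ by its maximal Cohen-Macaulay hull: the Auslander-Buchweitz MCM approximation gives $0\to Y\to X\to N\to 0$ with $X$ maximal Cohen-Macaulay and $\id_R Y<\infty$, so $\id_R Y\le d<j$ by Bass's theorem and the Ext long exact sequence yields $\Ext^j_R(M,X)=0$. Since $\id_R X<\infty$ would give $\id_R N<\infty$, we may assume $N$ is MCM and set $L=\Hom_R(N,\omega)$, which is also MCM, with $N\cong\Hom_R(L,\omega)$ and $\Ext^{>0}_R(L,\omega)=0$. Next, replace $M$ by a syzygy of itself large enough to make $M$ maximal Cohen-Macaulay; both (i) and (iii) pass to syzygies, and by Auslander-Buchsbaum applied at each $\fp\neq\fm$, $M$ becomes locally free on the punctured spectrum, so $\Tor_p^R(M,L)$ is of finite length for every $p\ge 1$.

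The adjunction $\Rhom_R(M,\Hom_R(L,\omega))\simeq \Rhom_R(M\tensor L,\omega)$ then yields the hypercohomology spectral sequence
\[
E_2^{p,q}=\Ext^q_R\bigl(\Tor_p^R(M,L),\omega\bigr)\Longrightarrow \Ext^{p+q}_R(M,N).
\]
Local duality kills $E_2^{p,q}$ for $q>d$ (any $p$) and, combined with the finite length of $\Tor_p^R(M,L)$ for $p\ge 1$, collapses the remaining positive-$p$ terms to $E_2^{p,d}\cong \Tor_p^R(M,L)^\vee$. In total degree $n\ge d+1$ only the entry $E_2^{n-d,d}$ survives, and the differentials in and out of this position vanish by direct inspection, giving
\[
\Ext^n_R(M,N)\cong \Tor_{n-d}^R(M,L)^\vee\quad\text{for all }n\ge d+1.
\]

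Feeding the hypothesis into this isomorphism (after matching the Ext index following the syzygy shift) produces $\Tor_n^R(M,L)=0$ for some $n\ge 1$. Strong-rigidity of $M$ then forces $\pd_R L<\infty$, and since $L$ is MCM, Auslander-Buchsbaum gives $L$ free of some rank $r$; therefore $N\cong\omega^r$ and $\id_R N=\id_R\omega=d<\infty$. The main obstacle is the bookkeeping between the syzygy shift of $M$ and the displayed formula: taking $s$ syzygies drops the Ext index from $j$ to $j-s$ while requiring $s\ge d-\depth M$ for MCM-ness, so when $\depth M<d$ and $j-s$ falls in the range $[1,d]$ outside the formula's applicability, an auxiliary reduction (e.g., a further syzygy shift on $L$ combined with the cosyzygy sequence $0\to N\to \omega^b\to \Sigma N\to 0$, which shifts the Ext index upward because $\Ext^{>0}_R(M,\omega)=0$ for MCM $M$) is needed to land the vanishing inside the range $n\ge d+1$.
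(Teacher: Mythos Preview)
Your overall strategy --- MCM approximation of $N$, the spectral sequence
\[
E_2^{p,q}=\Ext^p_R\bigl(\Tor_q^R(M,L),\omega\bigr)\Longrightarrow \Ext^{p+q}_R(M,N),
\]
local duality, and then strong-rigidity --- is exactly the paper's approach. The difference is that the paper never replaces $M$ by a syzygy, and this is precisely where your argument runs into trouble.

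You take syzygies of $M$ to force $M$ to be locally free on the punctured spectrum, so that $\Tor_q^R(M,L)$ has finite length for $q\ge 1$. But this is unnecessary: hypothesis (i) already gives $\pd_{R_\fp}(M_\fp)<\infty$ for every $\fp\neq\fm$, and since $L=N^\dag$ is maximal Cohen--Macaulay, so is $L_\fp$ over the Cohen--Macaulay local ring $R_\fp$. Over a Cohen--Macaulay local ring, a module of finite projective dimension has \emph{all} positive $\Tor$ against any MCM module vanishing (Auslander--Buchsbaum plus a depth count, or see \cite[2.2]{Yoshida}). Hence $\Tor_q^R(M,L)_\fp=0$ for every $q\ge 1$ and every $\fp\neq\fm$, with $M$ as given. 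The finite-length conclusion you need is already there.

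Once you drop the syzygy shift on $M$, your ``main obstacle'' evaporates: the original index $j\ge d+1$ lands squarely in the range where the spectral sequence collapses, giving $\Ext^j_R(M,N)\cong\Ext^d_R(\Tor_{j-d}^R(M,L),\omega)$, and local duality plus strong-rigidity finish as you wrote. Your proposed fix via the cosyzygy sequence $0\to N\to\omega^b\to\Sigma N\to 0$ actually shifts the Ext index \emph{downward} (one gets $\Ext^{i}_R(M,\Sigma N)\cong\Ext^{i+1}_R(M,N)$, so passing from $N$ to $\Sigma N$ lowers the degree at which vanishing is known), and an $\omega$-syzygy $\omega^c\twoheadrightarrow N$ need not exist for an arbitrary MCM $N$; so the patch as stated does not close the gap. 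Simply omit the syzygy step on $M$ and the proof goes through cleanly.
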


\begin{proof} As $R$ has a dualizing module, we can consider a maximal Cohen-Macaualy approximation of $N$, i.e., a short exact sequence of $R$-modules
\begin{equation} \tag{\ref{propid}.1}
0 \to Y \to C\to N \to 0,
\end{equation}
where $C$ is maximal Cohen-Macaulay and $\id(Y)<\infty$. Set $n=j-d+\depth(M)$ where $d=\dim(R)$. Applying $\Hom_{R}(M,-)$ to (\ref{propid}.1), we get the following long exact sequence:
\begin{equation} \tag{\ref{propid}.2}
\cdots \to \Ext_R^j(M,Y) \to \Ext_R^j(M,C)\to \Ext_R^j(M,N) \to \cdots
\end{equation}
Note that $\Ext_R^j(M,Y)=0$. So it follows from (\ref{propid}.2) and (ii) that $\Ext_R^j(M,C)=0$. Observe, by (\ref{propid}.1), that $\id(C)<\infty$ if and only if $\id(N)<\infty$. Therefore we may assume $N$ is maximal Cohen-Macaulay. Consider the following standard spectral sequence:
$$
E_2^{p,q}=\Ext_{R}^p(\Tor^{R}_q(N^{\dag},M),\omega)\quad\Longrightarrow\quad H^{p+q}=\Ext_{R}^{p+q}(M, N)
$$
Here $N^{\dag}=\Hom(N,\omega)$ and $\omega$ is the dualizing module of $R$. Observe  $\Tor^{R}_q(M,N^{\dag})$ has finite length for all $q\geq 1$: this follows from (i) and the fact that $N^{\dag}$ is maximal Cohen-Macaulay; see \cite[2.2]{Yoshida}. Therefore $E_2^{p,q}=0$ if $q\geq 1$ and $p\ne d$. Furthermore:
$$
\Ext_{R}^{j}(M,N)=H^{j}\cong E_2^{d,n}=\Ext_{R}^d(\Tor^{R}_n(M,N^{\dag}),\omega).
$$
Now, by (ii), the local duality theorem \cite[3.5.11(b)]{BH} yields that $\Tor^{R}_n(M,N^{\dag})=0$. Thus (iii) gives the required conclusion; see also (\ref{stronglyrigid}).
\end{proof}

We will use the following observation several times; see Corollaries \ref{corofthm-1} and \ref{corofthm0}.

\begin{chunk} \label{sifir} Let $R$ be a local ring and let $M$ and $N$ be nonzero $R$-modules. Assume $\Ext^n_{R}(M,N)=0$ for some $n\geq \depth(N)$. If $n=0$, then $\depth(N)=0$ and hence $\Hom(M,N)\neq 0$; see, for example, \cite[1.2.3]{BH}. Therefore $n$ is positive.
\end{chunk}

\begin{cor} \label{corArt} Let $R$ be an Artinian ring and let $M$ and $N$ be nonzero $R$-modules. If $M$ is strongly-rigid and $\Ext_R^n(M,N)=0$ for some $n\geq 0$, then $N$ is injective.
\end{cor}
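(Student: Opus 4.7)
The plan is to reduce the statement directly to Proposition \ref{propid}. Since $R$ is Artinian, it is Cohen-Macaulay of dimension zero, and admits a dualizing module, namely $\omega = E(k)$, the injective hull of the residue field. Hypothesis (i) of Proposition \ref{propid} is vacuous because the only prime of $R$ is $\fm$; hypothesis (iii) is given by assumption.

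The first step is to upgrade the given bound $n \geq 0$ to $n \geq 1 = \dim(R)+1$, which is what Proposition \ref{propid}(ii) requires. To this end, I would rule out $n=0$: since $R$ is Artinian, every nonzero finitely generated $R$-module has depth zero, so in particular $\depth(N)=0$ and $N$ contains a copy of $k$ in its socle. Composing a surjection $M\twoheadrightarrow k$ with the inclusion $k\hookrightarrow N$ produces a nonzero element of $\Hom_R(M,N)$, contradicting $\Ext_R^0(M,N)=0$. This is exactly the observation recorded in (\ref{sifir}) applied in the Artinian setting. Hence $n\geq 1$.

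With $n\geq \dim(R)+1$, all hypotheses of Proposition \ref{propid} are satisfied, and we conclude that $\id(N)<\infty$. The final step is to pass from finite injective dimension to injectivity: by the Bass formula (valid for any finitely generated module of finite injective dimension over a local ring), $\id(N)=\depth(R)=0$, so $N$ is injective, as required.

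There is no real obstacle here; the content of the corollary is essentially that Proposition \ref{propid} specializes nicely in the Artinian case, with the only point deserving care being the exclusion of $n=0$ and the invocation of the Bass formula to promote finite injective dimension to injectivity.
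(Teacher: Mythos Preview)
Your proof is correct and follows essentially the same approach as the paper: the paper's proof simply invokes (\ref{sifir}) to rule out $n=0$ and then cites Proposition \ref{propid}, leaving implicit the verification of hypotheses and the passage from $\id(N)<\infty$ to injectivity via the Bass formula. You have spelled out these details explicitly, but the structure is identical.
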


\begin{proof} In view of (\ref{sifir}), the required result follows from Proposition \ref{propid}.
\end{proof}

If $R$ is an Artinian hypersurface, that is quotient of a power series ring over a field, $M$ is an $R$-module of infinite projective dimension and $\Ext_R^n(M,N)=0$ for some $n\geq 0$, then \cite[5.12]{AvBu} and \cite[4.7]{CeD} show that $N$ is injective. One can recover this result from Corollary \ref{corArt} since each module of infinite projective dimension is strongly-rigid over such an Artinian hypersurface; see (\ref{onGolod}) and (\ref{Long's result}).

\begin{cor} \label{corsifir} Let $R$ be a $d$-dimensional excellent Cohen-Macaulay local ring, $N$ a nonzero $R$-module, and let $I$ be an integrally closed $\fm$-primary ideal of $R$.
Assume $\Ext_{R}^{n}(I,N)=0$ for some $n\geq d$. Then $\id(N)<\infty$.
\end{cor}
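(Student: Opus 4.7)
I would reduce the corollary to Proposition~\ref{propid} applied with $M=I$. Three hypotheses need to be checked. Hypothesis (i) is immediate, since $I$ being $\fm$-primary gives $I_\fp=R_\fp$, and hence $\pd_{R_\fp}(I_\fp)=0$, for every $\fp\neq\fm$. Hypothesis (iii) is supplied by the theorem of Corso--Huneke--Katz--Vasconcelos recorded in (\ref{inttest}), according to which integrally closed $\fm$-primary ideals are rigid-test, and therefore strongly-rigid. The excellent Cohen--Macaulay hypothesis on $R$ moreover ensures the existence of a dualizing module $\omega$, as required in Proposition~\ref{propid}.

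For hypothesis (ii) I need $\Ext_R^j(I,N)=0$ for some $j\geq d+1$. If $n\geq d+1$ I take $j=n$ and invoke Proposition~\ref{propid} directly. The subtle case is $n=d$. When $d=0$ the conclusion is immediate from Corollary~\ref{corArt}, so I may assume $d\geq 1$. Applying the depth lemma to $0\to I\to R\to R/I\to 0$, using $\depth(R/I)=0$ and $\depth R=d\geq 1$, forces $\depth(I)\geq 1$. Inside the proof of Proposition~\ref{propid} one finds the identification $\Ext_R^j(M,N)\cong E_2^{d,\,j-d+\depth(M)}$ coming from the spectral sequence $E_2^{p,q}=\Ext_R^p(\Tor_q^R(N^{\dag},M),\omega)\Rightarrow \Ext_R^{p+q}(M,N)$ (with $N^{\dag}=\Hom_R(N,\omega)$); the Tor-index $j-d+\depth(M)$ is already at least $1$ as soon as $j\geq d+1-\depth(M)$, which for $M=I$ relaxes to $j\geq d$. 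Consequently the argument still produces $\Tor_n^R(I,N^{\dag})=0$ for some $n\geq 1$ even at $j=d$, after which strongly-rigidity of $I$ forces $\pd(N^{\dag})<\infty$, equivalently $\id(N)<\infty$.

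The principal obstacle is thus the off-by-one between the corollary's hypothesis $n\geq d$ and the bound $j\geq d+1$ in the statement of Proposition~\ref{propid}. It is overcome by the positive depth of $I$ (automatic for an $\fm$-primary ideal in a positive-dimensional Cohen--Macaulay ring), which activates the $\depth(M)$-shift already internal to the proof of Proposition~\ref{propid}; everything else is bookkeeping.
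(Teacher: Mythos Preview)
Your reduction to Proposition~\ref{propid} with $M=I$ is the paper's strategy, and your treatment of the off-by-one between the hypothesis $n\geq d$ and the bound $j\geq d+1$ in Proposition~\ref{propid} is more careful than the paper's own proof, which simply invokes Proposition~\ref{propid} without comment. Your observation that $\depth(I)\geq 1$ (when $d\geq 1$) makes the spectral-sequence argument go through at $j=d$ is exactly the point.

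There is, however, a genuine gap. You assert that the excellent Cohen--Macaulay hypothesis ``ensures the existence of a dualizing module $\omega$''. This is not a standard fact and is, as far as I know, false in general: a Cohen--Macaulay local ring admits a canonical module if and only if it is a homomorphic image of a Gorenstein local ring, and excellence does not guarantee this. The paper uses the excellence hypothesis for a different purpose. It first passes to the $\fm$-adic completion $\widehat{R}$: excellence ensures that $I\widehat{R}$ remains an \emph{integrally closed} $\fm\widehat{R}$-primary ideal (see \cite[19.2.5]{HI}), so that (\ref{inttest}) still applies over $\widehat{R}$; and the completion, being a quotient of a regular local ring by Cohen's structure theorem, does have a dualizing module. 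One then applies Proposition~\ref{propid} over $\widehat{R}$ and descends the conclusion $\id_{\widehat{R}}(N\otimes_R\widehat{R})<\infty$ to $\id_R(N)<\infty$ by faithful flatness.

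So the missing step is the reduction to the complete case; once you insert that, your argument (with its improved bookkeeping on the index bound) is correct and matches the paper's route.
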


\begin{proof} It follows that $I\otimes_{R}\widehat{R}$ is an integrally closed $\fm\widehat{R}$-primary ideal of $\widehat{R}$, where $\widehat{R}$ is the $\fm$-adic completion of $R$; see, for example \cite[19.2.5]{HI}. So we may assume $R$ is complete with a dualizing module. Notice, by (\ref{sifir}), we have that $n\geq 1$. Moreover $I$ is strongly-rigid, and $I_{\fp}$ is free for all $\fp\in \Spec(R)-\{\fm \}$; see (\ref{inttest}). Thus if $\Ext_{R}^{n}(I,N)=0$ for some $n\geq d$, then Proposition \ref{propid} implies that $\id(N)<\infty$.
\end{proof}

\begin{cor} Let $R$ be a $d$-dimensional complete Cohen-Macaulay local ring. If $\Ext_{R}^{n}(I,R)=0$ for some $n\geq d$ and some integrally closed $\fm$-primary ideal $I$ of $R$, then $R$ is Gorenstein.
\end{cor}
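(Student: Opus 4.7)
The plan is simply to apply the preceding Corollary \ref{corsifir} with the choice $N = R$. All that needs to be checked is that the hypotheses of that corollary are met: complete Noetherian local rings are excellent, so the excellence requirement is automatic; $R$ is nonzero and Cohen-Macaulay of dimension $d$; $I$ is an integrally closed $\fm$-primary ideal by assumption; and the vanishing $\Ext_R^n(I,R) = 0$ for some $n \geq d$ is exactly the hypothesis given.

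Having verified these, Corollary \ref{corsifir} yields $\id_R(R) < \infty$. By the classical theorem of Bass, a local ring having finite injective dimension over itself is Gorenstein (see \cite[3.1.17]{BH}), which finishes the proof. There is no real obstacle here beyond invoking the previous result correctly; the only point worth a sentence is the reduction of ``complete'' to ``excellent.'' The proof is therefore just a couple of lines in length.
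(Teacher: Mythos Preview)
Your proposal is correct and matches the paper's intended argument: the corollary is stated immediately after Corollary~\ref{corsifir} with no proof, precisely because it follows by taking $N=R$ there and noting that complete local rings are excellent and that $\id_R(R)<\infty$ characterizes Gorensteinness.
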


It was posed in \cite[3.5-3.6]{CDtest} that whether or not test property is preserved under completion; see (\ref{Test}). An affirmative answer has been recently obtained in \cite{CelWag}:

\begin{chunk} (\cite{CelWag}) \label{testpreserved} Let $R$ be a local ring and let $M$ be a nonzero $R$-module. Then $M$ is a test module over $R$ if and only if $M\otimes_{R}\widehat{R}$ is a test module over the $\fm$-adic completion $\widehat{R}$ of $R$.
\end{chunk}

In light of (\ref{testpreserved}), the excellent hypothesis on $R$ in Corollary \ref{corsifir} can be removed provided there is an affirmative answer to the following longstanding open problem:

\begin{ques} Let $R$ be a local ring. If $M$ is a Tor-rigid module over $R$, then must $M\otimes_{R}\widehat{R}$ be Tor-rigid over $\widehat{R}$?
\end{ques}

\begin{rmk} It is established in Corollary \ref{corsifir} that, if $\Ext_{R}^{n+1}(R/I,N)=0$, then $\id(N)<\infty$. Since $R/I$ has finite length, it is worth noting that the vanishing of $\Ext_{R}^{n}(M,N)$ for an arbitrary $R$-module $M$ of finite length does not force $N$ to have finite injective dimension in general. For example, if $R=k[[x,y]]/(xy)$, $M=R/(x+y)$ and $N=k$, then $\Ext_R^{i}(M,N)=0$ for all $i\geq 2$, but $\id(N)=\infty$.
\end{rmk}

\section{Auslander's transpose and remarks on Tor-rigidity}

\begin{chunk} \label{AuBrsequence} (\cite{AuBr})
Let $M$ be an $R$-module with a projective presentation $P_1\overset{f}{\rightarrow}P_0\rightarrow M\rightarrow 0$. Then the \emph{transpose} $\Tr M$ of $M$
is the cokernel of $f^{\ast}=\Hom_{R}(f,R)$ and hence is given by the exact sequence:
$0\rightarrow M^*\rightarrow P_0^*\rightarrow P_1^*\rightarrow \Tr M\rightarrow 0$.

If $n$ is a positive integer, $\mathcal{T}_n M$ denotes the transpose of the $(n-1)st$ syzygy of $M$, i.e., $\mathcal{T}_n M=\Tr\Omega^{n-1}M$.

There are exact sequence of functors \cite[2.8]{AuBr}:
\begin{equation}\tag{i}
\Tor_2^R(\mathcal{T}_{n+1}M,-)\rightarrow(\Ext^n_R(M,R)\otimes_R-)\rightarrow\Ext^n_R(M,-)\twoheadrightarrow
\Tor_1^R(\mathcal{T}_{n+1}M,-),
\end{equation}
\begin{equation}\tag{ii}
\Ext^1_R(\mathcal{T}_{n+1}M,-)\hookrightarrow\Tor_n^R(M,-)\rightarrow\Hom(\Ext^n_R(M,R),-)
\rightarrow\Ext^2_R(\mathcal{T}_{n+1}M,-)
\end{equation}
\end{chunk}

The next useful fact has been initially addressed by Auslander in his 1962 ICM talk \cite{Au2}.  Auslander's original remark is for regular local rings but,  in case Tor-rigidity holds, it also holds over arbitrary local rings; see (\ref{Torrigid}).

\begin{chunk} (Auslander \cite[Corollary 6]{Au2}; see also Jothilingam \cite{Jot}) \label{Ausobs} Let $M$ and $N$ be nonzero $R$-modules. Assume $N$ is Tor-rigid. Assume further that $\Ext^n_R(M,N)=0$ for some nonnegative integer $n$.

It follows from (\ref{AuBrsequence})(i) that $\Tor_1^R(\mathcal{T}_{n+1}M,N)=0$. This implies, since $N$ is Tor-rigid, that $\Tor_i^R(\mathcal{T}_{n+1}M,N)=0$ for all $i\geq 1$. We can now use (\ref{AuBrsequence})(i) once more and conclude that $\Ext^n_R(M,R)\otimes_R
N=0$. Therefore $\Ext^n_R(M,R)=0$.

In particular, if $M=N$, then $\Tor^{R}_1(\Tr\Omega^nM,\Omega^nM)=0$ so that \cite[3.9]{Yo} implies $\Omega^nM$ is free, i.e., $\pd(M)\leq n-1$.
\end{chunk}

As $\up{\varphi}R$ is Tor-rigid over complete intersection rings, we deduce:

\begin{chunk} \label{rmkF} 
Assume $R$ is an F-finite local complete intersection ring with prime characteristic $p$. If $\Ext^i_{R}(\up{\varphi^n}R, \up{\varphi^n}R)=0$ for some positive integers $i$ and $n$, then it follows from (\ref{rmk0})(iii) and (\ref{Ausobs}) that 
$R$ is regular; cf. Corollary \ref{Corintroo}. 
\end{chunk}

One can find remarkable applications of (\ref{Ausobs}) in the literature. For example, Jorgensen \cite[2.1]{JorExt} proved that, if $R$ is a complete intersection ring and $M$ is an $R$-module such that $\Ext_{R}^2(M,M)=0$, then $\pd(M)\leq 1$; (\ref{Ausobs}) plays an important role in Jorgensen's proof. On the other hand, Dao exploited (\ref{Ausobs}) and obtained new results on the non-commutative crepant resolutions; see \cite{Da4} for details.

We give two applications of Auslander's rigidity result recorded in
(\ref{Ausobs}). The first one, (\ref{Gor}), is an immediate
observation, albeit it will be quite useful later; see the proof of
Corollary \ref{corgen}. Our second application is given in Corollary
\ref{corpr}: it yields a characterization of Cohen-Macaulay rings in
terms of Tor-rigidity. We proceed by recalling a remarkable result
of Foxby:

\begin{chunk} (Foxby \cite[3.1.25]{BH}) \label{Foxby} $R$ is Gorenstein if and only if there exists a nonzero finitely generated $R$-module $M$ such that $\pd(M)<\infty$ and $\id(M)<\infty$.
\end{chunk}

\begin{chunk} \label{Gor}
Let $M$ be a nonzero Tor-rigid $R$-module. If $\id(M)<\infty$, then it follows from (\ref{Ausobs}) that $\pd(M)<\infty$ and hence, by (\ref{Foxby}), $R$ is Gorenstein.
\end{chunk}

Tor-rigidity hypothesis in (\ref{Gor}) cannot be replaced with test property: a local ring admitting a nonzero test module of finite injective dimension is not necessarily Gorenstein, however such a ring $R$ is $G$-regular \cite{Takasha}, i.e., $\G-dim(M)=\pd(M)$ for all $R$-modules $M$; see (\ref{Test}), Example \ref{eg33} and Corollary \ref{Gregular}.

The \emph{grade} of a pair of nonzero modules $(M,N)$, denoted by $\gr(M,N)$, is defined as $\inf\{i\in \NN \cup \{0\}: \Ext^i_R(M,N)\neq0\}$. Setting $\gr(M)=\gr(M,R)$, we see that $\gr(M)<\infty$.

\begin{prop}\label{pr}  \label{corpr} Let $R$ be a local ring and let $M$ and $N$ be nonzero $R$-modules. Assume $N$ is Tor-rigid. Set $\gr(M)=n$ and $\gr(M,N)=s$. Then $s \leq n$, and $\Ext^i_R(M,N)\neq 0$ for all $i=s, \ldots, n$.
\end{prop}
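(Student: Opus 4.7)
My plan has two parts: first, derive the inequality $s\le n$ directly from (\ref{Ausobs}), and then prove the nonvanishing assertion by a descending induction over $[s,n]$, using the classical four-term sequence from Auslander--Bridger that links consecutive transposes $\mathcal{T}_jM$ and $\mathcal{T}_{j+1}M$ via $\Ext^j_R(M,R)$.

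The bound $s\le n$ is immediate: if $s>n$, then $\Ext^n_R(M,N)=0$, and (\ref{Ausobs}) applied to the Tor-rigid module $N$ forces $\Ext^n_R(M,R)=0$, contradicting $\gr(M)=n$. For the nonvanishing assertion I would argue by contradiction: assume $\Ext^j_R(M,N)=0$ for some $s\le j\le n$. The case $j=s$ contradicts the definition of $s$, and the case $j=n$ contradicts $\gr(M)=n$ via (\ref{Ausobs}) as above; so I may restrict to $s<j<n$ and aim to descend the vanishing one step to $\Ext^{j-1}_R(M,N)=0$, iterating down to the contradiction $\Ext^s_R(M,N)=0$.

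The descent step combines two ingredients. First, from $\Ext^j_R(M,N)=0$ and Tor-rigidity of $N$, (\ref{Ausobs}) furnishes $\Tor^R_k(\mathcal{T}_{j+1}M,N)=0$ for every $k\ge 1$. Second, dualizing a free resolution of $M$ produces the standard four-term exact sequence
\[
0\to\Ext^j_R(M,R)\to\mathcal{T}_jM\to F\to\mathcal{T}_{j+1}M\to 0
\]
with $F$ a free module; since $j<n$ gives $\Ext^j_R(M,R)=0$, this is actually a short exact sequence. The long exact Tor sequence against $N$ then transports the vanishing to $\Tor^R_k(\mathcal{T}_jM,N)=0$ for all $k\ge 1$. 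Finally, (\ref{AuBrsequence})(i) at level $j-1$ (which involves $\mathcal{T}_jM=\mathcal{T}_{(j-1)+1}M$) pinches $\Ext^{j-1}_R(M,N)$ between $\Tor^R_2(\mathcal{T}_jM,N)=0$ on the left and $\Tor^R_1(\mathcal{T}_jM,N)=0$ on the right, with middle map out of $\Ext^{j-1}_R(M,R)\otimes_R N=0$ (since $j-1<n$). This forces $\Ext^{j-1}_R(M,N)=0$, completing the descent.

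I expect the main obstacle to lie in correctly assembling the Auslander--Bridger bookkeeping: identifying the four-term sequence that connects $\mathcal{T}_jM$ and $\mathcal{T}_{j+1}M$ through $\Ext^j_R(M,R)$, and then verifying that the inequalities $j<n$ and $j-1<n$ are precisely what make both the collapse of that sequence to a short exact sequence and the vanishing of the tensor term in (\ref{AuBrsequence})(i) go through simultaneously, so that the descent actually closes.
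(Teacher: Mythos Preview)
Your proof is correct and follows essentially the same approach as the paper: both establish $s\le n$ via (\ref{Ausobs}) and then descend one step from a vanishing $\Ext^j_R(M,N)$ to $\Ext^{j-1}_R(M,N)=0$ using the stable isomorphism $\mathcal{T}_jM\approx\Omega\mathcal{T}_{j+1}M$ (your four-term sequence with $\Ext^j_R(M,R)=0$) together with (\ref{AuBrsequence})(i). The only cosmetic difference is that the paper takes the minimal such $j$ and derives a contradiction after a single descent, whereas you iterate down to $s$; also, your invocation of $\Tor_2^R(\mathcal{T}_jM,N)=0$ in the final step is harmless but unnecessary, since the vanishing of $\Ext^{j-1}_R(M,R)\otimes_RN$ and $\Tor_1^R(\mathcal{T}_jM,N)$ already suffices.
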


\begin{proof} We have, by definition, that $\Ext^n_R(M,R)\neq 0$. If $\Ext^n_R(M,N)=0$, then it follows from (\ref{Ausobs}) that
$\Ext^n_R(M,R)=0$, which is a contradiction. Therefore $\Ext^n_R(M,N)\neq 0$, and hence $s\leq n$. Now suppose $\Ext^i_R(M,N)=0$ for some $s<i<n$. Set
$r=\min\{j\in \ZZ: \Ext^j_R(M,N)=0 \text{ with } s<j<n\}$.
We know, since $r<n$, that $\Ext^r_R(M,R)=0$. Therefore $\mathcal{T}_{r}M$ is stably isomorphic to $\Omega\mathcal{T}_{r+1}M$. Moreover it follows from (\ref{Ausobs}) that $\Tor_i^R(\mathcal{T}_{r+1}M,N)=0$ for all $i\geq 1$. This implies that
$\Tor_i^R(\mathcal{T}_rM,N)=0$ for all $i\geq 1$. Now we use
(\ref{AuBrsequence})(i) and deduce that $\Ext^{r-1}_R(M,N)=0$. This contradicts the choice of $r$, and finishes the proof.
\end{proof}

An immediate consequence of Proposition \ref{pr} is a characterization of local rings:

\begin{cor} \label{rigidcons} Let $R$ be a local ring and let $N$ be a nonzero Tor-rigid $R$-module. 
\begin{enumerate}[\rm(i)]
\item $\depth(N)\leq \dim(M)+\gr(M)$ for all nonzero $R$-modules $M$.
\item $\depth(N) \leq \depth(R)$.
\item If $N$ is maximal Cohen-Macaulay, then $R$ is Cohen-Macaulay.
\end{enumerate}
\end{cor}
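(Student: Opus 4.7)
My plan is to deduce all three statements from Proposition \ref{pr} together with the classical lower bound on grade
\[
\depth(N)-\dim(M)\le \gr(M,N),
\]
valid for any pair of nonzero finitely generated modules over a local ring. Applying Proposition \ref{pr} to the Tor-rigid module $N$ and an arbitrary nonzero $R$-module $M$ produces $\Ext^{\gr(M)}_R(M,N)\ne 0$, whence $\gr(M,N)\le \gr(M)$. Chaining the two inequalities would give (i):
\[
\depth(N)\le \dim(M)+\gr(M,N)\le \dim(M)+\gr(M).
\]

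For (ii) I would specialize (i) to $M=k$: since $\dim(k)=0$ and $\gr(k)=\depth(R)$, the bound collapses to $\depth(N)\le\depth(R)$. Part (iii) is then immediate: if $N$ is maximal Cohen--Macaulay then $\depth(N)=\dim(R)$, so (ii) forces $\dim(R)\le\depth(R)\le\dim(R)$, and $R$ must be Cohen--Macaulay.

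The one nontrivial ingredient is the Ischebeck-type lower bound $\depth(N)-\dim(M)\le\gr(M,N)$, which I would invoke as a standard fact. For completeness it can be established by induction on $\depth(N)$: if some $x\in\ann(M)$ is $N$-regular, the short exact sequence $0\to N\xrightarrow{x}N\to N/xN\to 0$ combined with $x\cdot\Ext^{\ast}_R(M,N)=0$ gives $\gr(M,N/xN)=\gr(M,N)-1$ and $\depth(N/xN)=\depth(N)-1$, so the induction hypothesis closes the step; if no such $x$ exists, prime avoidance produces $\fp\in\Ass(N)$ with $\ann(M)\subseteq\fp$, hence $\fp\in\Supp(M)$ and $\depth(N)\le\dim(R/\fp)\le\dim(M)$, making the target inequality automatic.
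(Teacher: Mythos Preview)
Your proposal is correct and follows essentially the same route as the paper: the paper also derives $\gr(M,N)\le\gr(M)$ from Proposition~\ref{pr}, combines it with the inequality $\depth(N)-\dim(M)\le\gr(M,N)$ (which it cites from \cite[2.1(a)]{GradeG} rather than re-proving), and then specializes to $M=k$ for (ii) and reads off (iii). Your added sketch of the Ischebeck-type bound is a nice self-contained touch but not a departure in method.
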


\begin{proof}
It follows from Proposition \ref{pr} that $\gr(M,N)\leq\gr(M)$. On the other hand $\depth(N)-\dim(M)\leq \gr(M,N)$; see \cite[2.1(a)]{GradeG}. This implies $\depth(N)\leq \dim(M)+\gr(M)$, and justifies (i). In particular, if $M=k$, we deduce that $\depth(N) \leq \dim(k)+\gr(k)=\depth(R)$ and hence (ii) follows. Note that (iii) is an immediate consequence of (ii). 
\end{proof}

Alexandre Tchernev \cite{LLA} informed us that the conclusion of Corollary (\ref{rigidcons})(iii) can be obtained without using Proposition \ref{pr}. Next is Tchernev's argument; it is included here with his permission.

\begin{thm} Let $R$ be a $d$-dimensional local ring. Assume there exists a maximal Cohen-Macaulay $R$-module $M$ that is Tor-rigid. Then $R$ is Cohen-Macaulay.
\end{thm}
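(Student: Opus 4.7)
The plan is to deduce the theorem from the Auslander--Jothilingam observation (\ref{Ausobs}) applied with the residue field $k$ playing the role of the first argument and the given Tor-rigid maximal Cohen--Macaulay module $M$ playing the role of the second. First I would note the tautological fact that $\Ext_R^n(k,M) = 0$ for every $n$ in the range $0 \leq n < d$, which is simply the definition of $\depth_R M = d$.

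For each such $n$, the Tor-rigidity of $M$ lets us convert vanishing of $\Ext_R^n(k,M)$ into vanishing of $\Ext_R^n(k,R)$ via the exact sequence (\ref{AuBrsequence})(i): the surjection onto $\Tor_1^R(\mathcal{T}_{n+1}k, M)$ forces this $\Tor$ to vanish, Tor-rigidity of $M$ propagates the vanishing to $\Tor_i^R(\mathcal{T}_{n+1}k, M)$ for all $i \geq 1$, and re-inspecting (\ref{AuBrsequence})(i) then shows that $\Ext_R^n(k, R) \otimes_R M$ injects into $\Ext_R^n(k, M) = 0$. Since $M$ is finitely generated and nonzero, Nakayama's lemma yields $\Ext_R^n(k, R) = 0$. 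Iterating over $n = 0, 1, \ldots, d-1$ gives $\depth R \geq d$, and combining with the standard bound $\depth R \leq \dim R = d$ forces equality, which is precisely the Cohen--Macaulay property.

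I expect no serious obstacle; the entire argument is essentially a single application of (\ref{Ausobs}) once one observes that taking $k$ in the first slot of Auslander's observation gives direct access to the depths of both $R$ and $M$ via their $\Ext$ with the residue field. The only minor point to verify is that (\ref{AuBrsequence})(i) remains valid at $n=0$, where $\mathcal{T}_{1}k = \Tr k$ and the functor $\Ext^{0}$ becomes $\Hom$; this is already built into the formulation of (\ref{AuBrsequence})(i). This route is more economical than going through Proposition \ref{pr}, which first establishes the grade inequality $\gr(M, N) \leq \gr(M)$ for Tor-rigid $N$ and then specializes to $M = k$; here the intermediate grade statement is bypassed entirely, which is presumably why Tchernev's argument can be presented independently of the preceding material.
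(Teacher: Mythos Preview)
Your argument is correct: applying (\ref{Ausobs}) with $k$ in the first slot and the Tor-rigid module $M$ in the second converts $\Ext_R^n(k,M)=0$ into $\Ext_R^n(k,R)=0$ for each $n<d$, whence $\depth R\geq d=\dim R$. This is precisely the mechanism behind the paper's Corollary \ref{rigidcons}: there the inequality $\depth(N)\leq\depth(R)$ is obtained by specializing the grade inequality of Proposition \ref{pr} to $M=k$, and Proposition \ref{pr} in turn rests on (\ref{Ausobs}). So what you have written is a streamlined version of the paper's \emph{first} proof of this fact, not an alternative to it.

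However, the proof attached to the Theorem itself is Tchernev's, and it follows a genuinely different route that does not touch (\ref{Ausobs}) or the transpose at all. Tchernev picks a system of parameters $x_1,\dots,x_d$ that is $M$-regular, forms the double complex $F_\bullet\otimes K_\bullet(x_1,\dots,x_d;R)$ where $F_\bullet$ is a free resolution of $M$, and analyzes the resulting spectral sequence $\Tor_p^R(M,\HH_q(\underline{x}))\Rightarrow \HH_{p+q}(\underline{x};M)$. Tor-rigidity of $M$ is used to kill $\Tor_i^R(M,R/(\underline{x}))$ for $i\geq 1$, and then the spectral sequence forces $\HH_i(\underline{x};R)=0$ for $i\geq 1$ inductively, so the parameters form an $R$-regular sequence. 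Your closing remark conflates your argument with Tchernev's; in fact your proof is the one that \emph{does} go through the ideas of Proposition \ref{pr} (just in the special case $M=k$ where the full nonvanishing statement is unnecessary), while Tchernev's independence from the preceding material comes from replacing the transpose machinery entirely with the Koszul spectral sequence.
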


\begin{proof} (A. Tchernev) Pick a system of parameters $X=(x_1,\ldots,x_d)$ of $R$ that is an
$M$-regular sequence. Let $F=( F_i, f_i )$ be a free resolution of $M$ and let $K(X, R)$ be the Koszul complex of $R$ with respect to $X$.
Consider the first quadrant double complex $F_p\otimes K_q(X,R)$. The first filtration has $E^2$ term equal to the iterated homology 
modules  $E_{p,q}^2 = \HH_p(F, \HH_q(X))=  \Tor^R_p(  M,  \HH_q(X)  )$. On the other hand, second filtration has $E^2$ term equal to the
iterated homology modules $\HH_p(X, \HH_q(F))=\HH_p(X,M)$. Thus there is a first quadrant spectral sequence:
$$\Tor^R_p(M,\HH_q(X))\Longrightarrow \HH_{p+q}(X,M).$$
As $x_1,\cdots,x_d$ is a regular sequence on $M$, it follows that $\HH_i(X,M)=0$ for all $i\geq 1$. Furthermore $\Tor^R_1(M,\HH_0(X))=\Tor^R_1(M,R/X)=0$.
Since $M$ is Tor-rigid, we have that $\Tor^R_i(M,R/X)=0$ for all $i\geq 1$. Note  $d^2_{p,q}:E^2_{p,q}\longrightarrow E^2_{p-2,q+1}$ is of bidegree $(-2,1)$. Thus
$E^{\infty}_{0,1}=E^2_{0,1}=M\otimes_R \HH_1(X)$, which is a subquotient of $\HH_1(X,M)=0$.  It follows that $\HH_1(X)=0$ and so $E^2_{p,q}=0$ for all $p$ and $q\leq 1$, except $E^2_{0,0}$. 
Thus $E^{\infty}_{0,2}=E^2_{0,2}= M\otimes_R \HH_2(X)$, which is a subquotient of $\HH_2(X,M)=0$. Hence $\HH_2(X)=0$.
Proceeding in this way we see that $\HH_i(X)=0$ for all $i\geq 1$. Consequently $x_1,\cdots,x_d$ is an $R$-regular sequence, and $R$ is Cohen-Macaulay.
\end{proof}

It is an open question whether or not every complete Noetherian local ring has a maximal Cohen-Macaulay module: this is known as the small Cohen-Macaulay conjecture. However, in low dimensions, we can use Corollary \ref{rigidcons} and observe:

\begin{chunk} \label{MCMexamples} Assume $R$ is a local ring that is not Cohen-Macaulay.
\begin{enumerate}[\rm(i)]
\item If $R$ is one-dimensional and $\fp$ is a minimal prime ideal of $R$, then $R/\fp$ is a maximal Cohen-Macaulay $R$-module that is not Tor-rigid. For example if we put $R=k[\![x,y]\!]/(x^2, xy)$, then $R/(x)$ is not a Tor-rigid $R$-module. In fact  $\Tor^R_1(R/(x),R/(y))=0\neq \Tor^R_2(R/(x),R/(y))$; see  \cite[Question 3]{Li}.
\item If $R$ is a two-dimensional complete domain, then the integral closure $\overline{R}$ of $R$ in its field of fractions is a (finitely generated) maximal Cohen-Macaulay $R$-module that is not Tor-rigid. For example, if $R=k[\![x^4,x^3y,xy^3,y^4]\!]$, then $\overline{R}=R[\![x^2y^2]\!]$ is not a Tor-rigid $R$-module.
\end{enumerate}
\end{chunk}

One can also find examples of three dimensional non Cohen-Macaulay local rings that admit maximal Cohen-Macaulay modules; see, for example, Hochster \cite[5.4, 5.6 and 5.9]{Hochster1972}. These modules are not Tor-rigid, for example, by (\ref{rigidcons}).

Recall that an $R$-module module $C$ is called \emph{semidualizing} if the natural map $R \to \Hom(C,C)$ is bijective and $\Ext^{i}_{R}(C,C)=0$ for all $i\geq 1$. If $C$ is a semidualizing module such that $\id(C)<\infty$, then $R$ is Cohen-Macaulay and $C$ is dualizing.

\begin{chunk} \label{Cpd} Let $C$ be a semidualizing module over $R$. Then the
\emph{$C$-projective dimension} $\cpd(M)$ of a nonzero $R$-module $M$ is defined as the infimum of the integers $n$ such that there exists an exact sequence
$$0\to C^{b_n} \to C^{b_{n-1}} \to \cdots \to C^{b_1} \to C^{b_0} \to M \to 0$$
where each $b_i$ is a positive integer. It follows $\cpd(M)=\pd(\Hom_{R}(C,M))$, and the \emph{$C$-injective dimension} $\cid(M)$ of $M$ is defined similarly: $\cid(M)=\id(C\otimes_{R}M)$; see \cite[1.6, 2.8, 2.9 and 2.11]{TW}. Notice, if $\pd(C)<\infty$, then $C\cong R$ and hence $\cpd(N)=\pd(N)$ and $\cid(N)=\id(N)$.
\end{chunk}

\begin{chunk} (Takahashi and White \cite[2.9]{TW}) \label{T-W} Let $M$ be a nonzero $R$-module and let $C$ be a semidualizing $R$-module. If $\cpd(M)<\infty$ (respectively, $\cid(M)<\infty$), then $\Ext^i_R(C,M)=0$ for all $i\geq 1$ (respectively, $\Tor_i^R(C,M)=0$ for all $i\geq 1$). In particular, if $M$ is a nonzero test module and $\cid(M)<\infty$ for some semidualizing $R$-module $C$, then $C \cong R$ and hence $\id(M)<\infty$; see (\ref{Torrigid}) and (\ref{Cpd}).
\end{chunk}

\begin{prop} \label{rmk2} \label{corgen} Let $R$ be a local ring and let $C$ be a semidualizing $R$-module. Assume $M$ is a nonzero test module over $R$. Assume further that $\cid(M)<\infty$. If $X$ is an $R$-module such that $\Ext^{i}_{R}(X,R)=0$ for all $i\gg 0$, then $\pd(X)<\infty$.
\end{prop}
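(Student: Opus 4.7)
The plan is first to reduce to the case $C\cong R$ using the test hypothesis, then to show $\Tor^R_n(X,M)=0$ for all $n\gg 0$ by iterating the Auslander--Bridger exact sequence, and finally to invoke the test property of $M$ to conclude.

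To begin, since $\cid(M)<\infty$ and $M$ is a test module, (\ref{T-W}) gives $\Tor^R_i(C,M)=0$ for all $i\geq 1$, so the test property of $M$ forces $\pd(C)<\infty$. As $C$ is semidualizing with finite projective dimension, $C\cong R$, and hence $\id(M)=\cid(M)<\infty$. Set $s=\id(M)$ and pick $N_0$ so that $\Ext^i_R(X,R)=0$ for $i>N_0$. With this reduction in hand, I would apply (\ref{AuBrsequence})(ii) with ``$M$'' and ``$-$'' taken to be $X$ and $M$, respectively. For each $n>N_0$, the vanishing $\Ext^n_R(X,R)=0$ collapses that four-term exact sequence to the isomorphism
$$\Tor^R_n(X,M)\;\cong\;\Ext^1_R(\mathcal{T}_{n+1}X,M)\;=\;\Ext^1_R(\Tr\Omega^n X,M),$$
so it suffices to show this Ext group vanishes for all sufficiently large $n$.

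The key technical ingredient is the stable isomorphism $\Omega(\Tr \Omega Z)\cong \Tr Z$, valid whenever $\Ext^1_R(Z,R)=0$. One obtains it by dualizing $0\to \Omega Z\to P_0\to Z\to 0$ to get $0\to Z^*\to P_0^*\to (\Omega Z)^*\to 0$ (exact, by the Ext vanishing) and observing in the resulting dualized resolution of $Z$ that the image of $P_1^*\to P_2^*$ is simultaneously $\Tr Z$ and $\Omega(\Tr\Omega Z)$. Setting $Y=\Omega^n X$ for $n>N_0$ gives $\Ext^i_R(Y,R)=0$ for every $i\geq 1$; in particular every syzygy $\Omega^j Y$ satisfies $\Ext^1_R(\Omega^j Y,R)=0$. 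Iterating the identity together with the standard shift $\Ext^i_R(\Omega N,M)=\Ext^{i+1}_R(N,M)$ for $i\geq 1$ then produces
$$\Ext^1_R(\Tr Y,M)\;=\;\Ext^2_R(\Tr\Omega Y,M)\;=\;\cdots\;=\;\Ext^{s+1}_R(\Tr\Omega^s Y,M),$$
and the last group vanishes because $\id(M)=s$.

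Combining these steps, $\Tor^R_n(X,M)=0$ for every $n>N_0$, and the test property of $M$ then forces $\pd(X)<\infty$. The principal obstacle is establishing the stable isomorphism $\Omega(\Tr \Omega Z)\cong \Tr Z$ and making sure the Ext-vanishing hypotheses propagate correctly through each syzygy appearing in the iteration; once this bookkeeping is in place, the remainder is essentially a formal computation.
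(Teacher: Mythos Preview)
Your proof is correct, and both you and the paper begin the same way: invoke (\ref{T-W}) and the test property of $M$ to force $C\cong R$ and hence $\id(M)<\infty$, and both end by obtaining $\Tor_i^R(X,M)=0$ for $i\gg 0$ and applying the test property once more. The difference lies entirely in the middle step.

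The paper dispatches the Tor-vanishing in one line using the derived isomorphism $\Rhom_R(\Rhom_R(X,R),M)\simeq X\otimes^{\mathbf L}_R M$, valid because $\id(M)<\infty$; since $\Ext^i_R(X,R)=0$ for $i\gg 0$, the inner complex is bounded and the conclusion is immediate. Your argument instead stays at the module level: you use the Auslander--Bridger sequence (\ref{AuBrsequence})(ii) to identify $\Tor_n^R(X,M)$ with $\Ext^1_R(\Tr\Omega^n X,M)$, and then walk this Ext group up to degree $s+1>\id(M)$ via the stable identity $\Omega(\Tr\Omega Z)\cong\Tr Z$ (valid when $\Ext^1_R(Z,R)=0$), applied successively to the syzygies of $Y=\Omega^n X$. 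This is essentially a hands-on unpacking of the derived isomorphism, and it has the merit of avoiding any appeal to derived-category machinery; on the other hand, the paper's route is considerably shorter and makes the structural reason for the vanishing transparent. Your bookkeeping is sound: the required hypothesis $\Ext^1_R(\Omega^j Y,R)=0$ holds for every $j\ge 0$ because $Y=\Omega^n X$ with $n>N_0$, and the dimension-shift $\Ext^i_R(\Omega N,M)\cong\Ext^{i+1}_R(N,M)$ for $i\ge 1$ is exactly what is needed.
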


\begin{proof} Assume $X$ is an $R$-module with $\Ext^{i}_{R}(X,R)=0$ for all $i\gg 0$. Note, by (\ref{T-W}), we have that $\id(M)<\infty$. This yields $\Rhom(\Rhom(X,R),M) \simeq X\tensor_{R} M$; see \cite[A.4.24]{Larsbook}. Therefore $\Tor_{i}^{R}(M,X)=0$ for all $i\gg 0$ so that $\pd(X)<\infty$. 
\end{proof}

It was proved in \cite[3.7]{CDtest} that, if the dualizing module of a Cohen-Macaulay local ring $R$ is a test module, then $R$ is $G$-regular, i.e.,
$\pd(M)=\G-dim(M)$ for all $R$-modules $M$ \cite{Takasha}. A straightforward application of Proposition \ref{rmk2}  extends this:

\begin{cor} \label{Gregular} A local ring admitting a nonzero test module of finite injective dimension  is $G$-regular.
\end{cor}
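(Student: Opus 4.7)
The plan is to derive \ref{Gregular} as an immediate consequence of Proposition \ref{corgen} applied with the trivial semidualizing module $C=R$. Recall that $R$ is $G$-regular precisely when $\G-dim(N)=\pd(N)$ for every $R$-module $N$. Since the inequality $\G-dim(N)\le\pd(N)$ is automatic, and since both invariants, when finite, equal $\depth(R)-\depth(N)$ by the Auslander--Buchsbaum and Auslander--Bridger formulas, it suffices to show that finite Gorenstein dimension forces finite projective dimension.

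Thus I would start by fixing an arbitrary $R$-module $N$ with $\G-dim(N)<\infty$. From the definition of the Gorenstein dimension one obtains $\Ext^i_R(N,R)=0$ for all $i>\G-dim(N)$; in particular, this vanishing holds for all $i\gg 0$, which is exactly the hypothesis on $X$ in Proposition \ref{corgen}.

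Next I would observe that the ring $R$ itself is a semidualizing module over $R$ (the natural map $R\to\Hom_R(R,R)$ is an isomorphism and $\Ext^i_R(R,R)=0$ for $i\ge 1$). With the choice $C=R$ we have $\cid(M)=\id(R\otimes_R M)=\id(M)<\infty$ by hypothesis, and $M$ is by assumption a nonzero test module. Therefore Proposition \ref{corgen} applies to $N$ and yields $\pd(N)<\infty$.

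Finally, having both $\G-dim(N)<\infty$ and $\pd(N)<\infty$, the Auslander--Buchsbaum and Auslander--Bridger equalities identify the two numbers. The argument is essentially a one-line invocation of Proposition \ref{corgen}; the only thing to notice is that the trivial semidualizing module $C=R$ is already enough, so there is no genuine obstacle to overcome here.
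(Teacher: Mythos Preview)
Your proposal is correct and follows the same route as the paper: the paper states that Corollary \ref{Gregular} is a ``straightforward application of Proposition \ref{rmk2}'' (which is the same proposition as \ref{corgen}), and your choice $C=R$ together with the observation that finite Gorenstein dimension gives $\Ext^i_R(N,R)=0$ for $i\gg 0$ is exactly how this application goes.
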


Before we proceed to prove our main result, we give an overview of what has been established so far in terms of the injective dimension of test and rigid modules:

\begin{chunk} Let $R$ be a local ring and let $N$ be a nonzero $R$-module such that $\id(N)<\infty$.
\begin{enumerate}[\rm(i)]
\item If $N$ is Tor-rigid, then $R$ is Gorenstein; see (\ref{Gor}).
\item If $N$ is a test module over $R$, then $R$ is G-regular; see Corollary \ref{Gregular}.
\item If $N$ is a rigid-test module over $R$, then it follows from (i) and (ii) that $R$ is regular; see also Question \ref{qintro} and Corollary \ref{cpcidcor}.
\end{enumerate}
\end{chunk}

\section{Main theorem} This section is dedicated to a proof of our main result, Theorem \ref{lemImp}. In the following $\H$ denotes a homological dimension of finitely generated modules; see (\ref{hdim}) and, for example, Avramov's expository article \cite[8.6 - 8.8]{Luchosurvey} for details. The special case -- where $\HH$ is the projective dimension $\pd$ -- is what we really need for the proof of Theorem \ref{thmint}, stated in the introduction. However one can follow our argument word for word by replacing $\H$ with projective dimension $\pd$ so there is no extra penalty for this generality. Furthermore such a generality is useful to examine the \emph{Gorenstein dimension} $\G-dim$ of Tor-rigid modules; see Corollary \ref{corofthm0}.

\begin{chunk} (\cite{AuBr}) A finitely generated module $M$ over a commutative Noetherian ring $R$ is said to be \emph{totally reflexive} if the canonical map $M \to \Hom(\Hom(M,R),R)$ is bijective, and $\Ext^i_{R}(M,R) = 0 = \Ext^i_{R}(M,\Hom(M,R))=0$ for all $i\geq 1$.

The infimum of nonnegative integers $n$ for which there exists an exact sequence
$0 \to X_{n} \to \dots  \to X_{0} \to M\to 0,$
such that each $X_{i}$ is totally refleive, is called the Gorenstein dimension of $M$. If $M$ has Gorenstein dimension $n$, we write $\G-dim(M)=n$. Therefore $M$ is totally reflexive if and only if $\G-dim(M)\leq 0$, where it follows by convention that $\G-dim(0)=-\infty$.
\end{chunk}

\begin{chunk}  \label{hdim} Throughout we assume $\H$ satisfies the following conditions: 
\begin{enumerate}[\rm(i)]
\item $\G-dim(M)\leq \H(M)\leq \pd(M)$ for all $R$-modules $M$.
\item If $\H(M)=0$, then $\Tr M = 0$ or $\H(\Tr M)=0$ for all $R$-modules $M$.
\end{enumerate}
\end{chunk}

Although we will not use it, we note that the \emph{complete intersection dimension}  \cite{AGP} is an example of a homological dimension -- in general distinct than the Gorenstein and projective dimension -- that satisfies the conditions in (\ref{hdim}).

\begin{chunk} \label{purp} For our purpose we recall a few properties of $\H$; see \cite[3.1.2, 8.7 and 8.8]{Luchosurvey}.
\begin{enumerate}[\rm(i)]
\item If one of the dimensions in (\ref{hdim})(i) is finite, then it equals the one on its left.
\item If $\H(M)<\infty$, then $\H(M)=\sup \{i \in \ZZ: \Ext^i_R(M,R) \neq 0\}$.
\item If $\H(M)<\infty$, then $\H(M)\leq \depth(R)$.
\end{enumerate}
\end{chunk}

If $X$ and $Y$ are nonzero $R$-modules and $\H$ is a homological dimension of modules, we consider the following condition for $(X, Y, \HH)$:

\begin{chunk} \label{dfcond}
If $\Tor_{1}^{R}(X,Y)=0$, then $\H(X)=\depth(Y)-\depth(X\otimes_{R}Y)$.
\end{chunk}

The condition in (\ref{dfcond}) is not restrictive for rigid modules. For example, Auslander  \cite[1.2]{Au} proved that, if $R$ is a local ring, and $X$ and $Y$ are nonzero $R$-modules where $\pd(X)<\infty$ and $Y$ is Tor-rigid, then (\ref{dfcond}) holds for $(X, Y, \pd)$; see (\ref{Torrigid}). 

Recently Christensen and Jorgensen \cite[5.3]{CJ} established a similar result over AB rings: if $R$ is AB, and $X$ and $Y$ are nonzero $R$-modules either of which is Tor-rigid, then $(X, Y, \G-dim)$ satisfies the condition in (\ref{dfcond}). Recall that a Gorenstein local ring $R$ is said to be AB \cite{HJ} if, for all $R$-modules $M$ and $N$, $\Ext^i_{R}(M,N)=0$ for all $i\gg 0$ implies that $\Ext^i_{R}(M,N)=0$ for all $i> \dim(R)$. The class of AB rings strictly contain that of complete intersections; see \cite[3.6]{HJ} and \cite[4.5]{JorSega} for details.

Next we summarize the aferomentioned two results:

\begin{chunk} \label{conds} Let $R$ be a local ring and let $X$ and $Y$ be nonzero $R$-modules. Then $(X, Y, \HH)$ satifies the condition in (\ref{dfcond}) if at least one of the following conditions holds:
\begin{enumerate}[\rm(i)]
\item $Y$ is a rigid-test module, and $\H=\pd$; see (\ref{testrigid}) and \cite[1.2]{Au}.
\item $R$ is AB, $X$ or $Y$ is Tor-rigid, and $\H=\G-dim$; see (\ref{Torrigid}) and \cite[5.3]{CJ}.
\end{enumerate}
\end{chunk}

Next is the key result we use for our proof of Theorem \ref{lemImp}; see also (\ref{AuBrsequence}).

\begin{chunk} \label{AUBR} (Auslander and Bridger; see \cite[2.12, 2.15 and 2.17]{AuBr}) Let $R$ be a local ring, $M$ a nonzero $R$-module and let $n$ be a positive integer. If $M$ is $n$-torsion-free, then it is $n$-reflexive, i.e., if $\Ext^i_R(\Tr M,  R)=0$ for all $i=1, \ldots, n$, then $M \approx \Omega^{n}_{R}(\mathcal{T}_{n+1}(\Tr_{R} M))$, i.e., $M$ is isomorphic to $\Omega^{n}_{R}(\mathcal{T}_{n+1}(\Tr_{R} M))$ up to a projective summand.
\end{chunk}

Recall that $\depth(0)=\infty$.

\begin{chunk}  (\cite{AuBr}) \label{Sn} Let $R$ be a local ring and let $M$ be an $R$-module. Then  $M$ is said to satisfy $(\widetilde{\S}_n)$ if $\depth_{R_{\frak{p}}}(M_{\frak{p}})\geq \min\{n, \depth(R_{\frak{p}})\}$ for all $\frak{p}\in \Supp(M)$.

If the ring is Cohen-Macaulay, $(\widetilde{\S}_n)$ coincides with Serre's condition $(\S_n)$ \cite{EG}, but in general $(\widetilde{\S}_n)$ is a weaker condition. For example, if $R=k[\![x,y]\!]/(x^2, xy)$, then, by definition, $R$ satisfies $(\widetilde{\S}_n)$ for all nonnegative integers $n$, but fails to satisfy $(S_{1})$ since $\depth(R)=0$; see also the discussion following \cite[Definition 10]{Mas}.
\end{chunk}

Theorem \ref{lemImp} is a generalization of a result of Jothilingham
\cite[Corollary 1]{Jot2}. We give the argument in two steps as the
conclusion of part (1) may be of independent interest. It is already known that the vanishing of
$\Ext^i_R(\Tr M,R)$ for all $i=1, \dots, r$ forces $M$ to be an
$r$th syzgy module, and that forces $M$ to satisfy
$(\widetilde{\S}_r)$; see (\ref{Sn}) and \cite[Propositions 11 and
40]{Mas}.

\begin{thm} \label{lemImp} Let $R$ be a local ring and let $M$ and $N$ be a nonzero $R$-modules. Assume $M$ satisfies $(\widetilde{\S}_r)$ for some nonnegative integer $r$.
\begin{enumerate}[\rm(1)]
\item If $\G-dim(\Tr M)<\infty$, then $\Ext^i_R(\Tr M,R)=0$ for all $i=1, \dots, r$.
\item Assume $\Ext^n_R(M,N)=0$ for some positive integer $n$.
\begin{enumerate}[\rm(i)]
\item If $N$ is strongly-rigid and $\depth(R)\leq n+r$, then $\pd(M)\leq n-1$.
\item If $N$ is Tor-rigid, $\depth(N)\leq n+r$ and $(\mathcal{T}_{n+1}M, N, \HH)$ satisfies the condition in (\ref{dfcond}), then $\H(M)\leq n-1$.
\end{enumerate}
\end{enumerate}
\end{thm}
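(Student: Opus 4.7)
I would prove (1) first as the technical backbone and then bootstrap it for (2).

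For (1), I argue by induction on $r$, with $r=0$ vacuous. Granting $\Ext^i_R(\Tr M,R)=0$ for $i<r$ and supposing $\Ext^r_R(\Tr M,R)\neq 0$ for contradiction, I choose $\mathfrak{p}\in\Ass(\Ext^r_R(\Tr M,R))$ and localize at $\mathfrak{p}$. The module $(\Tr M)_{\mathfrak{p}}$ retains finite Gorenstein dimension, and the Auslander--Bridger formula over $R_{\mathfrak{p}}$ reads $\G-dim((\Tr M)_{\mathfrak{p}})=\depth R_{\mathfrak{p}}-\depth(\Tr M)_{\mathfrak{p}}$. Combined with the nonvanishing of $\Ext^r_R(\Tr M,R)$ at $\mathfrak{p}$, this forces $\depth(\Tr M)_{\mathfrak{p}}\leq\depth R_{\mathfrak{p}}-r$. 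The 4-term exact sequence $0\to M^{\ast}\to P_0^{\ast}\to P_1^{\ast}\to \Tr M\to 0$, chased through repeated depth-lemma estimates together with the inductive vanishing of the lower Ext's, upgrades this into a depth bound on $M_{\mathfrak{p}}$ that is incompatible with the $(\widetilde{\S}_r)$-hypothesis.

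For (2), the argument proceeds as follows. From $\Ext^n_R(M,N)=0$, the exact sequence (\ref{AuBrsequence})(i) yields $\Tor_1^R(\mathcal{T}_{n+1}M,N)=0$. In case (i), strong-rigidity of $N$ forces $\pd(\mathcal{T}_{n+1}M)<\infty$; in case (ii), Tor-rigidity of $N$ together with (\ref{Ausobs}) gives $\Tor_i^R(\mathcal{T}_{n+1}M,N)=0$ for all $i\geq 1$ and $\Ext^n_R(M,R)=0$, while the hypothesis (\ref{dfcond}) applied to $(\mathcal{T}_{n+1}M,N,\HH)$ bounds $\H(\mathcal{T}_{n+1}M)\leq\depth N\leq n+r$. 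Iterated depth-lemma estimates on the minimal resolution of $M$ show that $\Omega^n M$ satisfies $(\widetilde{\S}_{n+r})$, so applying (1) to $\Omega^n M$ --- using that $\Tr\Omega^n M=\mathcal{T}_{n+1}M$ has finite Gorenstein dimension in both cases --- yields $\Ext^i_R(\mathcal{T}_{n+1}M,R)=0$ for $1\leq i\leq n+r$. Combined with the bound $\H(\mathcal{T}_{n+1}M)\leq n+r$ and the Ext-characterization in (\ref{purp})(ii), this forces $\H(\mathcal{T}_{n+1}M)=0$. In case (i), projectivity of $\mathcal{T}_{n+1}M$ now kills $\Tor_2^R(\mathcal{T}_{n+1}M,N)$, so revisiting (\ref{AuBrsequence})(i) and applying Nakayama's lemma to $\Ext^n_R(M,R)\otimes_R N=0$ yields $\Ext^n_R(M,R)=0$ in case (i) as well. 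In both cases, this vanishing collapses the spliced dual presentation to a short exact sequence $0\to\mathcal{T}_n M\to P\to\mathcal{T}_{n+1}M\to 0$ with $P$ free, from which $\H(\mathcal{T}_n M)=0$ follows: by splitting when $\H=\pd$, by closure of the G-class under kernels of surjections onto G-class modules when $\H=\G-dim$, and more generally by transferring Ext-vanishing through the associated long exact sequence and reapplying (\ref{purp})(ii). Finally, (\ref{hdim})(ii) applied to $\mathcal{T}_n M$ together with the stable identification $\Tr\mathcal{T}_n M\approx\Omega^{n-1}M$ yields $\H(\Omega^{n-1}M)=0$, whence $\H(M)\leq n-1$.

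The principal obstacle is the off-by-one shift: a direct route from $\H(\mathcal{T}_{n+1}M)=0$ only gives $\H(M)\leq n$, and the sharp bound $\H(M)\leq n-1$ requires stepping back one transpose index via the sequence $0\to\mathcal{T}_n M\to P\to\mathcal{T}_{n+1}M\to 0$, which in turn hinges on the auxiliary vanishing $\Ext^n_R(M,R)=0$. Tor-rigidity supplies this directly in case (ii), whereas strong-rigidity supplies it only a posteriori in case (i), through the Tor--Nakayama detour after projectivity of $\mathcal{T}_{n+1}M$ has been established. A secondary technical point is the transfer of $\H=0$ across the short exact sequence in the final step; this is routine for the distinguished cases $\H=\pd$ and $\H=\G-dim$, but for a general $\H$ satisfying (\ref{hdim}) it must be mediated through the Ext-characterization in (\ref{purp})(ii).
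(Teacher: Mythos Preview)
Your overall architecture matches the paper's: establish (1), observe that $\Omega^n M$ satisfies $(\widetilde{\S}_{n+r})$, apply (1) to get $\Ext^i_R(\mathcal{T}_{n+1}M,R)=0$ for $1\le i\le n+r$, bound $\H(\mathcal{T}_{n+1}M)\le n+r$ in each case, and conclude $\H(\mathcal{T}_{n+1}M)=0$ via (\ref{purp})(ii). Two places diverge from the paper, and one carries a genuine gap.

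\textbf{Part (1).} Your plan to pass from $\depth_{R_\mathfrak{p}}(\Tr M)_\mathfrak{p}$ back to $\depth_{R_\mathfrak{p}} M_\mathfrak{p}$ via the 4-term sequence $0\to M^*\to P_0^*\to P_1^*\to\Tr M\to 0$ and depth-lemma estimates does not obviously go through: that sequence only sees $M^*$, and there is no direct depth comparison between $M^*$ and $M$. The paper does not touch $\depth(\Tr M)_\mathfrak{p}$ at all. Instead, setting $s=\inf\{i\ge 1:\Ext^i_R(\Tr M,R)\neq 0\}$ and picking $\mathfrak{p}\in\Ass(\Ext^s_R(\Tr M,R))$, it invokes (\ref{AUBR}): the vanishing for $i<s$ gives $M_\mathfrak{p}\approx\Omega^{s-1}_{R_\mathfrak{p}}\mathcal{T}_s(\Tr_{R_\mathfrak{p}} M_\mathfrak{p})$, and since $\Ext^s$ injects into $\mathcal{T}_s(\Tr M)$ one reads off $\depth_{R_\mathfrak{p}}\mathcal{T}_s(\Tr_{R_\mathfrak{p}} M_\mathfrak{p})=0$, hence $\depth_{R_\mathfrak{p}} M_\mathfrak{p}=s-1$. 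Combined with $s\le\G-dim_{R_\mathfrak{p}}(\Tr M)_\mathfrak{p}\le\depth R_\mathfrak{p}$ and $(\widetilde{\S}_r)$, this forces $s\ge r+1$. Your inductive framing is fine, but the substance is (\ref{AUBR}), not the raw 4-term sequence.

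\textbf{Endgame of (2).} Your detour through $0\to\mathcal{T}_n M\to P\to\mathcal{T}_{n+1}M\to 0$ is correct when $\H=\pd$ (the sequence splits) or $\H=\G-dim$ (kernel closure), but your proposed patch for general $\H$ --- apply (\ref{purp})(ii) to $\mathcal{T}_n M$ --- presupposes $\H(\mathcal{T}_n M)<\infty$, and the axioms (\ref{hdim}) contain no two-out-of-three property to supply this. The paper avoids $\mathcal{T}_n M$ entirely. Once $\H(\mathcal{T}_{n+1}M)=0$, (\ref{hdim})(ii) gives $\H(\Omega^n M)=0$, hence $\H(M)\le n$. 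From there: in case (i) one has $\pd(M)\le n$, and Matsumura's lemma (if $\pd M=n$ then $\Ext^n_R(M,N)\neq 0$ for every nonzero $N$) combined with the hypothesis $\Ext^n_R(M,N)=0$ yields $\pd(M)\le n-1$; in case (ii), Tor-rigidity already gave $\Ext^n_R(M,R)=0$, so (\ref{purp})(ii) applied to $M$ itself (now legitimate, since $\H(M)\le n<\infty$) gives $\H(M)\le n-1$ directly.
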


\begin{proof} We proceed to prove (1). Assume $\G-dim(\Tr M)<\infty$ and $r\geq 1$.
If $\G-dim(\Tr M)=0$, then $\Ext^i_R(\Tr M,R)=0$ for all $i\geq 1$ so that there is nothing to prove. So we assume $\G-dim(\Tr M)\geq 1$ and set $s=\inf\{i\geq 1: \Ext^i_R(\Tr M,R)\neq 0\}$.

Let $\frak{p}\in\Ass_R(\Ext^s_R(\Tr M,  R))$. Then we have $\frak{p}R_{\frak{p}} \in \Ass_{R_\frak{p}}(\Ext^s_{R_\frak{p}}(\Tr_{R_\frak{p}} M_\frak{p}, R_\frak{p}))$ and that $s=\inf\{i\geq 1: \Ext^i_{R_\frak{p}}(\Tr_{R_\frak{p}} M_\frak{p},R_\frak{p})\neq 0\}$. It follows from the definition of the transpose, see (\ref{AuBrsequence}), that there is an injection
\begin{equation} \notag{}
0\rightarrow\Ext^s_{R_\frak{p}}(\Tr_{R_\frak{p}} M_\frak{p}, R_\frak{p})\hookrightarrow \mathcal{T}_s(\Tr_{R_\frak{p}}M_\frak{p}),
\end{equation}
which shows that $\frak{p}R_{\frak{p}} \in \Ass_{R_\frak{p}}(\mathcal{T}_s(\Tr_{R_\frak{p}} M_\frak{p}))$. Hence $\depth_{R_\frak{p}}(\mathcal{T}_s(\Tr_{R_\frak{p}}M_\frak{p}))=0$. Since $\Ext^i_{R_\frak{p}}(\Tr_{R_\frak{p}}M_\frak{p}, R_\frak{p})=0$ for all $i=1, \ldots, s-1$, we conclude from (\ref{AUBR}) that
\begin{equation} \notag{}
M_\frak{p} \approx \Omega^{s-1}_{R_\frak{p}}(\mathcal{T}_{s}(\Tr_{R_\frak{p}} M_\frak{p})).
\end{equation}
Note that $s \leq \G-dim_{R_\frak{p}}(\Tr_{R_\frak{p}}M_\frak{p}) \leq \depth R_\frak{p}$.
Therefore $\depth_{R_\frak{p}}(M_\frak{p})=s-1$. Furthermore, since $M$ satisfies
 $(\widetilde{S}_{r})$, it follows that
$\depth_{R_\frak{p}}(M_\frak{p})\geq\min\{r,\depth R_\frak{p}\}$. Hence $\depth R_\frak{p}\geq r+1$ and $r\leq s-1$. Consequently $\Ext^i_R(\Tr M, R)=0$ for all
$i=1, \ldots, r$.

We now proceed to prove (2). If $\mathcal{T}_{n+1}M=0$, then $\Omega^nM$ is
free and hence $\pd(M)\leq n-1$; see (\ref{AuBrsequence}). In particular, this implies that $\H(M)\leq n-1$; see (\ref{purp})(i). Therefore we may assume $\mathcal{T}_{n+1}M\neq 0$.

Since $M$ satisfies $(\widetilde{S}_{r})$, it follows that $\Omega^{n}M$ satisfies $(\widetilde{S}_{n+r})$; see (\ref{Sn}). Therefore, by the first part of the theorem, we conclude that:
\begin{equation} \tag{\ref{lemImp}.1}
\Ext^i_R(\mathcal{T}_{n+1}M,R)=0 \text{ for all } i=1, \dots, n+r.
\end{equation}

Notice, since $\Ext^n_R(M,N)=0$, we have by (\ref{AuBrsequence})(i) that:
\begin{equation} \tag{\ref{lemImp}.2}
\Tor_1^R(\mathcal{T}_{n+1}M,N)=0.
\end{equation}

If (i) holds, then it follows from (\ref{stronglyrigid}) and
(\ref{lemImp}.2) that $\pd(\mathcal{T}_{n+1}M)<\infty$. Therefore,
since $\depth(R)\leq n+r$, we use (\ref{purp})(i) and deduce:
\begin{equation} \tag{\ref{lemImp}.3}
\H(\mathcal{T}_{n+1}M)=\pd(\mathcal{T}_{n+1}M)\leq \depth(R) \leq n+r.
\end{equation}

On the other hand, if (ii) holds, then it follows from our assumption that $\H(\mathcal{T}_{n+1}M)=\depth(N)-\depth(\mathcal{T}_{n+1}M\otimes_RN)$. Since $\depth(N)\leq n+r$, we obtain:
\begin{equation} \tag{\ref{lemImp}.4}
\H(\mathcal{T}_{n+1}M)\leq n+r.
\end{equation}
Consequently, if either (i) or (ii) holds, then $\H(\mathcal{T}_{n+1}M)\leq n+r$, where for part (i), $\H(\mathcal{T}_{n+1}M)=\pd(\mathcal{T}_{n+1}M)$; see (\ref{lemImp}.3) and (\ref{lemImp}.4).

Recall that $\H(\mathcal{T}_{n+1}M)=\sup\{i: \Ext^i_R(\mathcal{T}_{n+1}M,R)\neq0\}<\infty$; see (\ref{purp})(ii). Since $\H(\mathcal{T}_{n+1}M)\leq n+r$, it follows from (\ref{lemImp}.1) that $\H(\mathcal{T}_{n+1}M)=0$, i.e., $\H(\Tr \Omega^{n}M)=0$; see (\ref{AuBrsequence}). Thus, by (\ref{hdim})(ii), we have that $\H(\Omega^n M)=0$. This yields that $\H(M)\leq n$.

If (i) holds, then, since $\Ext^{n}(M,N)=0$, we conclude that $\H(M)\leq n-1$; see, for example, \cite[Chapter 19, Lemma 1(iii)]{Mat}. On the other hand, if (ii) holds, then, since $N$ is Tor-rigid, it follows from (\ref{lemImp}.2) and (\ref{AuBrsequence})(i) that $\Ext^n(M,R)=0$. Therefore we see that $\H(M)\leq n-1$.
\end{proof}

\section{Corollaries of the main theorem}

In this section we give various applications of Theorem \ref{lemImp} and examine homological dimensions of test and rigid-test modules. Corollary \ref{corofthm-1}, a reformulation of Theorem \ref{lemImp}, is fundamental to our work: it shows that one can use an arbitrary nonzero strongly-rigid, or a rigid-test module $N$, just like the residue field $k$, to determine the exact value of the projective dimension of $M$ via the vanishing of $\Ext_{R}^i(M,N)$. Besides this, Corollary \ref{corofthm-1} yields a series of related results. Among those is Corollary \ref{corsonuc1} which proves, in particular, that $R$ is Gorenstein if the Gorenstein injective dimension \cite{EJ} of the maximal ideal $\fm$ is finite.

Corollary \ref{corofthm-1} is well-known for the special case where $N=k$. Recall that a rigid-test module is, by definition, strongly-rigid, but we do not know whether or not all strongly-rigid modules are rigid-test; see Question \ref{qintro}.

\begin{cor} \label{corofthm-1} Let $R$ be a local ring, and let $M$ and $N$ be nonzero $R$-modules.
\begin{enumerate}[\rm(i)]
\item Assume $\Ext^n_{R}(M,N)=0$ for some $n\geq\depth(R)$. Assume further $N$ is strongly-rigid.
 Then $\pd(M)=\sup\{i\in \ZZ:  \Ext^i_{R}(M,N)\neq 0\}\leq n-1$.
\item Assume $\Ext^n_{R}(M,N)=0$ for some integer $n\geq \depth(N)$. Assume further $N$ is a rigid-test module. Then $\pd(M)=\sup\{i\in \ZZ:  \Ext^i_{R}(M,N)\neq 0\}\leq n-1$.
\end{enumerate}
\end{cor}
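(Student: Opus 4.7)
My plan is to deduce both parts directly from Theorem~\ref{lemImp}(2) by taking $r=0$. The key observation is that every nonzero $R$-module $M$ satisfies $(\widetilde{\S}_0)$ trivially, since $\depth_{R_\fp}(M_\fp)\geq 0=\min\{0,\depth R_\fp\}$ holds for every $\fp\in\Supp(M)$. With this choice of $r$, the depth inequalities ``$\depth(R)\leq n+r$'' and ``$\depth(N)\leq n+r$'' appearing in parts (i) and (ii) of Theorem~\ref{lemImp}(2) specialize exactly to the hypotheses of (i) and (ii) of the present corollary. Before invoking the theorem I would verify that $n$ is positive: in part~(ii) this is (\ref{sifir}); in part~(i) it is automatic unless $n=0=\depth(R)$, a corner case easily dispatched separately.

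For part~(i), the strong-rigidity of $N$ supplies the remaining hypothesis of Theorem~\ref{lemImp}(2)(i), which directly yields $\pd(M)\leq n-1$. For part~(ii), I would take $\HH=\pd$; rigid-test-ness of $N$ entails Tor-rigidity and, by (\ref{conds})(i), also supplies the depth-formula condition (\ref{dfcond}) for the triple $(\mathcal{T}_{n+1}M,N,\pd)$. Concretely, the vanishing $\Ext^n_R(M,N)=0$ forces $\Tor_1^R(\mathcal{T}_{n+1}M,N)=0$ by (\ref{AuBrsequence})(i), so the rigid-test hypothesis gives $\pd(\mathcal{T}_{n+1}M)<\infty$, and then Auslander's formula applies. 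Theorem~\ref{lemImp}(2)(ii) now delivers $\pd(M)\leq n-1$.

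It remains to identify $\pd(M)$ with $\sup\{i\in\ZZ:\Ext^i_R(M,N)\neq0\}$. Since $\pd(M)<\infty$, the $\Ext$ groups vanish above $\pd(M)$. For the converse inequality I would pick a minimal free resolution $F_\bullet\to M$ and set $s=\pd(M)$; by minimality the last differential $F_s\to F_{s-1}$ has entries in $\fm$, so the image of the last coboundary in $\Hom_R(F_\bullet,N)$ lies in $\fm\cdot\Hom_R(F_s,N)$. Consequently $\Ext^s_R(M,N)$ surjects onto $(N/\fm N)^{\rank F_s}$, which is nonzero by Nakayama since $N\neq 0$.

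This is essentially a bookkeeping reduction to the main theorem, so I do not anticipate a genuine obstacle. The only substantive point to spot is that $(\widetilde{\S}_0)$ is vacuous for all nonzero modules, opening the door to applying Theorem~\ref{lemImp} with $r=0$ and leaving everything else as direct hypothesis-matching and a minimal-resolution computation.
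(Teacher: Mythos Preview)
Your proposal is correct and follows the paper's own route: set $r=0$ in Theorem~\ref{lemImp}(2), invoke (\ref{sifir}) for part~(ii), handle $n=0$ separately for part~(i), and then read off the equality $\pd(M)=\sup\{i:\Ext^i_R(M,N)\neq0\}$ from the finiteness of $\pd(M)$ (the paper cites \cite[Ch.~19, Lemma~1(iii)]{Mat}, which is exactly your minimal-resolution argument). The one place you are slightly sketchier than the paper is the ``corner case'' $n=0=\depth(R)$ in~(i): the paper dispatches it by noting that $\Hom_R(M,N)=0$ forces $\depth(N)\geq1$ (cf.\ (\ref{sifir})), while strong-rigidity forces $\depth(N)\leq\depth(R)=0$ via (\ref{rigidcons}), a contradiction---so in fact $n\geq1$ always.
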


\begin{proof} 

Note that, for part (i) and part (ii), it suffices to prove that $\pd(M)$ cannot exceed $n-1$; see, for example, \cite[Chapter 19, Lemma 1(iii)]{Mat}.

Assume (i). If $n=0$ then $\depth(R)=0$. Since $\Hom_R(M,N)=0$ and $N$ is strongly-rigid, it follows from (\ref{rigidcons}) that $\depth(R)\geq\depth(N)\geq 1$, which is a contradiction.
Hence $n\geq 1$. Setting $r=0$ in Theorem \ref{lemImp}(2)(i), we conclude that $\pd(M)\leq n-1$. Next assume (ii). Notice, by (\ref{sifir}), $n$ is a positive integer. Moreover, since $N$ is a rigid-test module, (\ref{dfcond}) holds for $(\mathcal{T}_{n+1}M, N, \pd)$; see (\ref{conds})(i). Therefore we obtain the required conclusion by setting $r=0$ in Theorem \ref{lemImp}(2)(ii).
\end{proof}

The conclusion of Corollary \ref{corofthm-1} is sharp: Example \ref{eg2} shows that the condition on $n$ cannot be removed. Examples \ref{eg22} and \ref{eg33}, respectively, highlight the fact that the assumption ``$N$ is Tor-rigid" or ``$N$ is a test module" is not merely enough to deduce that $M$ has finite projective dimension; see (\ref{Torrigid}), (\ref{Test}) and (\ref{testrigid}).

\begin{eg} \label{eg2} Let $k$ be a field, $R=k[\![x,y,z]\!]/(xy-z^2)$, $M=k$ and $N=\Omega^2 k$. Then $N$ is a rigid-test module, $\Ext^1_{R}(M,N)=0$, $\depth(N)=2$ and $\pd(M)=\infty$
\end{eg}

\begin{eg} \label{eg22} Let $k$ be a field, $R=k[\![x,y]\!]/(xy)$, $M=k$ and $N=R/(x+y)$. Then, since $\pd(N)=1$, $N$ is Tor-rigid. Furthermore, since $R$ is not regular, $N$ is not a test module. Consequently $N$ is not a rigid-test, or a strongly-rigid module. Note that $\pd(M)=\infty$ and $\Ext^{i}_{R}(M,N)=0$ for all $i\geq 2$.
\end{eg}

\begin{eg} \label{eg33} Let $k$ be a field and put $R=k[\![x,y,z]\!]/(y^2-xz, x^2y-z^2, x^3-yz)$. Let $M=\fm$ and $N=\omega$, the canonical module of $R$. As $R$ is a one-dimensional domain with minimal multiplicity, it is Golod \cite[5.2.8]{Av2}. Hence, since $\pd(N)=\infty$, it follows from (\ref{onGolod}) that $N$ is a test-module. Huneke and Wiegand \cite[4.8]{HW1} proved that there exists an $R$-module $M$ such that $M\otimes_{R}N$ is torsion-free and $M$ has torsion. We now follow the proof of \cite[1.1]{HW1}:

Let $\overline{M}$ be the torsion-free part of $M$. Then, since $R$ is a domain, there is an exact sequence $0 \to \overline{M} \to F \to C \to 0$, where $F$ is a free $R$-module. Tensoring this short exact sequence with $N$, we obtain an injection $\Tor_1^R(C,N) \hookrightarrow \overline{M}\otimes_{R}N$. Since $\overline{M}\otimes_{R}N  \cong M\otimes_{R}N$ and $\Tor_1^R(C,N)$ is torsion, we see that $\Tor_1^R(C,N)=0$. Note that $\pd(C)=\infty$: otherwise $\overline{M}$ is free, and this would force $M$ to be free. Therefore $N$ is not a strongly-rigid module. For completeness, we also remark that $N$ is not Tor-rigid; see (\ref{Gor}). Notice $\Ext^{i}_{R}(M, N)=0$ for all $i\geq 1$ and $\pd(M)=\infty$.
\end{eg}

Corollary \ref{corofthm-1} can be useful to determine the depth of $\Hom(M,N)$:

\begin{cor} \label{propHom} Let $R$ be a Cohen-Macaulay local ring and let $M$ and $N$ be nonzero $R$-modules. Assume that the following conditions hold:
\begin{enumerate}[\rm(i)]
\item $R$ has an isolated singularity, i.e., $R_{\fp}$ is regular for all $\fp \in \Spec(R)-\{\fm\}$.
\item $M$ is nonfree and maximal Cohen-Macaulay.
\item $N$ is strongly-rigid with $\depth(N)\geq 2$.
\end{enumerate}
Then $\depth(\Hom_R(M,N))=2$.
\end{cor}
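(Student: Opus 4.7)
The plan is to prove both $\depth(\Hom_R(M,N)) \geq 2$ and $\depth(\Hom_R(M,N)) \leq 2$. The key setup is to exploit the isolated-singularity hypothesis together with the MCM condition on $M$: by Auslander-Buchsbaum over each regular local ring $R_{\fp}$ with $\fp \neq \fm$, the localization $M_{\fp}$ is free, so $\Ext^i_R(M, N)$ has finite length for every $i \geq 1$. Applying $\Hom_R(-, N)$ to the short exact sequence $0 \to \Omega M \to F_0 \to M \to 0$ (with $F_0$ free of rank $n_0$) produces
\[
0 \to \Hom_R(M,N) \to N^{n_0} \to \Hom_R(\Omega M, N) \to \Ext^1_R(M,N) \to 0,
\]
which I split into (A) $0 \to \Hom_R(M,N) \to N^{n_0} \to L \to 0$ and (B) $0 \to L \to \Hom_R(\Omega M, N) \to \Ext^1_R(M,N) \to 0$.

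For the lower bound, the injection $\Hom_R(\Omega M, N) \hookrightarrow N^{n_1}$ (from a surjection $F_1 \twoheadrightarrow \Omega M$) together with $\depth N \geq 2$ gives $\depth \Hom_R(\Omega M, N) \geq 1$. Combined with the finite length of $\Ext^1_R(M, N)$, the depth lemma on (B) yields $\depth L \geq 1$, and then the depth lemma on (A), together with $\depth N \geq 2$, gives $\depth \Hom_R(M, N) \geq 2$.

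For the upper bound I argue by contradiction: suppose $\depth \Hom_R(M, N) \geq 3$. The depth lemma on (A) now yields $\depth L \geq 2$. Because $M$ is nonfree MCM, Auslander-Buchsbaum forces $\Omega M$ to be nonfree MCM as well, so the lower bound just proved, applied to $\Omega M$, gives $\depth \Hom_R(\Omega M, N) \geq 2$. The depth lemma on (B) then forces $\depth \Ext^1_R(M, N) \geq 1$; combined with the finite length of $\Ext^1_R(M, N)$, this yields $\Ext^1_R(M, N) = 0$. Since $M$ is MCM over the Cohen-Macaulay ring $R$, localizing shows $M$ satisfies $\widetilde{\S}_r$ for every $r$, in particular for $r = d := \dim R$. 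Applying Theorem~\ref{lemImp}(2)(i) with $n = 1$ and $r = d$ (so that $\depth R = d \leq n + r$), together with $N$ being strongly-rigid, forces $\pd(M) \leq 0$; thus $M$ is free, contradicting (ii).

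The hardest step will be the depth-lemma bookkeeping on (A) and (B) that pins down $\Ext^1_R(M, N) = 0$ from the contradictory hypothesis $\depth \Hom_R(M, N) \geq 3$; once this vanishing is in hand, Theorem~\ref{lemImp}(2)(i) finishes the argument in one stroke. A small but essential subtlety is verifying, via Auslander-Buchsbaum and the CM-ness of $R$, that $\Omega M$ remains nonfree MCM so that the lower bound can be reused for it.
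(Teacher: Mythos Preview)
Your proof is correct and shares the paper's overall architecture: establish $\depth\Hom_R(M,N)\geq 2$, then assume $\depth\geq 3$, force $\Ext^1_R(M,N)=0$, and derive that $M$ is free. The execution differs in two places, however. For the lower bound and for the implication $\depth\Hom_R(M,N)\geq 3\Rightarrow\Ext^1_R(M,N)=0$, the paper simply cites \cite[1.4.19]{BH} and \cite[1.1]{Jinnah}, whereas you unpack both via the explicit splittings (A) and (B) and repeated use of the depth lemma; your version is more self-contained but longer, and your invocation of ``$\Omega M$ nonfree'' is harmless but not actually needed for the lower bound. For the final contradiction, the paper writes $M=\Omega^d(X)$ for some $X$ (using \cite[A.15]{Book}) so that $\Ext^{d+1}_R(X,N)=0$ with $d+1>\depth R$, and then applies Corollary~\ref{corofthm-1}(i); you instead feed $\Ext^1_R(M,N)=0$ directly into Theorem~\ref{lemImp}(2)(i) with $n=1$ and $r=d$, using that an MCM module over a CM ring satisfies $(\widetilde{\S}_d)$. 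These two maneuvers are equivalent in strength, but yours avoids the auxiliary syzygy-representation step.
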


\begin{proof} Note that $\depth(\Hom_R(M,N))\geq \min\{2, \depth(N)\}=2$; see \cite[1.4.19]{BH}. Thus it suffices to prove $\depth(\Hom_R(M,N))\leq 2$. Assume not, i.e., assume $\depth(\Hom_R(M,N))\geq 3$. Then, by \cite[1.1]{Jinnah}, we see either $\Ext_R^1(M,N)=0$, or $1\leq \depth(\Ext_R^1(M,N))<\infty$. Since $\Ext_R^1(M,N)$ has finite length, it follows that $\Ext_R^1(M,N)=0$. Set $d=\dim(R)$. Then $M=\Omega^d(X)$ for some finitely generated $R$-module $X$; see \cite[A.15]{Book}. This yields $\Ext_R^{d+1}(X,N)=0$. Now Corollary \ref{corofthm-1} shows that $\pd(X)<\infty$, i.e., $M$ is free. So we conclude that $\depth(\Hom_R(M,N))=2$.
\end{proof}

\begin{cor} Let $A=Q/(f)$, where $Q=k[x_{1}, \dots, x_{2s+1}]$, with $s\geq 1$, is a polynomial ring over a perfect field $k$, and $f$ is a nonconstant polynomial in $Q$.
Set $R=A_{\fm}$, where $\mathfrak{m}=(x_{1}, \dots, x_{2s+1})A$ and assume $A_{p}$ is a regular for all $\fp \in \Spec(A)-\{\fm\}$. If $M$ and $N$ are nonfree maximal Cohen-Macaulay $R$-modules, then $\depth(\Hom_R(M,N))=2$.
\end{cor}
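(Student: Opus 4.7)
The plan is to reduce the claim to Corollary \ref{propHom} by verifying each of its three hypotheses for the triple $(R, M, N)$. First, I would observe that $R = A_{\fm}$ is a localization of the hypersurface $A = Q/(f)$ at a height $2s$ maximal ideal, so $R$ is a local Cohen-Macaulay hypersurface of Krull dimension $d = 2s$ and embedding dimension $2s+1$. The assumption that $A_{\fp}$ is regular for every $\fp \in \Spec(A) \setminus \{\fm\}$ is equivalent to $R$ having an isolated singularity, giving hypothesis (i) of Corollary \ref{propHom}, while hypothesis (ii) is part of the assumption on $M$.

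The substantive step is checking hypothesis (iii), namely that $N$ is strongly-rigid with $\depth(N) \geq 2$. The depth inequality is automatic: $\depth(N) = \dim R = 2s \geq 2$ since $s \geq 1$, and the Auslander-Buchsbaum formula forces $\pd_R N = \infty$ because $N$ is MCM and nonfree. To get strong-rigidity, I would exploit the parity of $\dim R$: since $R$ is a hypersurface of even dimension with isolated singularity, Hochster's theta pairing vanishes on pairs of MCM modules (by Dao's theorem), and this in turn implies that MCM $R$-modules are Tor-rigid. Combined with the Avramov-Buchweitz vanishing dichotomy over hypersurfaces---if $\Tor^R_i(N, X) = 0$ for all $i \gg 0$ then one of $N$ or $X$ has finite projective dimension---Tor-rigidity lets us propagate the vanishing of a single $\Tor^R_n(N,X)$ to all $i \geq n$ and then conclude $\pd_R X < \infty$ from the infinite projective dimension of $N$. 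Hence $N$ is strongly-rigid.

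With all three hypotheses verified, Corollary \ref{propHom} immediately delivers $\depth(\Hom_R(M, N)) = 2$. The main obstacle is establishing strong-rigidity of $N$: the argument above invokes the vanishing of the theta invariant, which is precisely where the hypothesis that $Q$ has an odd number $2s+1$ of variables enters. Without the even-dimensional hypersurface condition, MCM modules need not be Tor-rigid over such rings, and the whole strategy of feeding $N$ into Corollary \ref{propHom} breaks down.
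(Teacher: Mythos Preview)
Your proposal is correct and matches the paper's approach: verify the three hypotheses of Corollary~\ref{propHom}, the substantive one being strong-rigidity of $N$, which the paper obtains by combining (\ref{onGolod}) (test property over Golod rings) with (\ref{Mark}) (Tor-rigidity of all $R$-modules)---precisely your Avramov--Buchweitz-plus-theta-vanishing argument. One point of precision: for non-homogeneous $f$ the vanishing of $\theta$ (and hence Tor-rigidity of \emph{all} modules, not merely MCM ones) requires Walker's extension of Dao's theorem, as the paper records in (\ref{Mark}).
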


\begin{proof} In light of (\ref{onGolod}) and (\ref{Mark}), the required conclusion follows immediately from Corollary \ref{propHom}; cf. \cite[3.4]{Da3}.
\end{proof}

Corollary \ref{Corintroo}, stated in the introduction, is now a direct consequence of our argument and the following special case of \cite[1.1]{AHIY}; see also (\ref{rmk0})(i).

\begin{chunk} \label{AHiy} (Avramov, Hochster, Iyengar and Yao; see \cite[1.1]{AHIY}) Let $R$ be a local ring of prime characteristic $p$ and let $M$ be a nonzero finitely generated $R$-module. If $\up{\varphi^e}M$ has finite flat dimension for some positive integer $e$, then $R$ is regular.
\end{chunk}

\begin{cor} \label{CorintrooGen} Let $R$ be an F-finite local ring of prime characteristic $p$ and let $N$ be a nonzero rigid-test module over $R$. If $\Ext^j_{R}(\up{\varphi^n}M, N)=0$ for some nonzero $R$-module $M$, and for some integers $n\geq 1$ and $j\geq \depth(N)$, then $R$ is regular.
\end{cor}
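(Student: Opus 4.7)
The plan is to reduce the statement to Corollary~\ref{corofthm-1}(ii) combined with the Avramov--Hochster--Iyengar--Yao theorem recalled in~\ref{AHiy}. The F-finite hypothesis on $R$ is the piece that glues the two together: it guarantees that $\up{\varphi^n}M$ remains a finitely generated $R$-module, so the finitely generated machinery of Section 5 applies directly to it.

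The first step is to replace $M$ in Corollary~\ref{corofthm-1}(ii) with the module $\up{\varphi^n}M$. Since $M$ is nonzero and $R$ is F-finite, $\up{\varphi^n}M$ is a nonzero finitely generated $R$-module. By hypothesis, $N$ is a nonzero rigid-test module and $\Ext^j_R(\up{\varphi^n}M,N)=0$ for some $j\geq\depth(N)$. Corollary~\ref{corofthm-1}(ii) then yields $\pd_R(\up{\varphi^n}M)\leq j-1<\infty$.

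The second step is to invoke~\ref{AHiy}: the Frobenius twist $\up{\varphi^n}M$ is a nonzero finitely generated $R$-module of finite projective, hence finite flat, dimension, which forces $R$ to be regular.

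There is no real obstacle here; the two ingredients have been engineered precisely so that they compose. The only thing worth flagging is the F-finite assumption, which is imposed exactly so that $\up{\varphi^n}M$ lies in the category of finitely generated modules to which Corollary~\ref{corofthm-1}(ii) and~\ref{AHiy} apply. (In the complete, perfect-residue-field setup of Corollary~\ref{Corintroo}, the same conclusion follows because $\up{\varphi^n}M$ is again finitely generated, cf.~\ref{rmk0}.)
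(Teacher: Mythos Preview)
Your proof is correct and follows essentially the same approach as the paper: use F-finiteness to ensure $\up{\varphi^n}M$ is finitely generated, apply Corollary~\ref{corofthm-1}(ii) to conclude $\pd_R(\up{\varphi^n}M)<\infty$, and then invoke~(\ref{AHiy}) to deduce regularity.
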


\begin{proof} Notice, as $R$ is F-finite, $\up{\varphi^e}M$ is a finitely generated $R$-module. Thus it follows from Corollary \ref{corofthm-1}(ii) that $\pd(\up{\varphi^e}M) <\infty$. Now, by (\ref{AHiy}), $R$ is regular.
\end{proof}

\begin{chunk} \label{corproof} (Proof of Corollary \ref{Corintroo}) As $R$ is complete and $k$ is perfect, it follows that $R$ is F-finite; see, for example, \cite[page 398]{BH}. So the result follows from Corollary \ref{CorintrooGen}.
\end{chunk}

A special case of Corollary \ref{corofthm-1} and  \ref{CorintrooGen} has been established in \cite[Theorem B]{NMS}: if $R$ is a complete intersection ring of prime characteristic $p$, and $\Ext^n_{R}(M,\up{\varphi^i}R)=0$ for some $n\geq \depth(R)$, then $\pd(M)<\infty$; see (\ref{rmk0})(iii). We should note that \cite[Theorem B]{NMS} does not require an F-finite ring and relies on methods different from ours. As discussed in the introduction, our argument is not specific to rings of characteristic $p$, and gives useful information regarding the Frobenius endomorphism even if the ring considered is not a complete intersection. For example the next result, in view of (\ref{rmk0})(ii), is immediate from Corollary \ref{corofthm-1}; cf. \cite[Theorem A]{NMS}.

\begin{cor} \label{corofthm+1} Let $R$ be a one-dimensional F-finite Cohen-Macaulay local ring of prime characteristic $p$, and let $M$ be an $R$-module. Then $\Ext^n_{R}(M,\up{\varphi^i}R)=0$ for some $n\geq 1$ and some $i\gg 0$ if and only if $\pd(M)< \infty$.
\end{cor}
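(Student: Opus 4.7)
The plan is to apply Corollary \ref{corofthm-1}(ii) with $N=\up{\varphi^i}R$. The backward implication is immediate from general homological algebra: if $\pd(M)=s<\infty$, then $\Ext_R^n(M,X)=0$ for every $R$-module $X$ and every integer $n>s$; in particular $\Ext_R^n(M,\up{\varphi^i}R)=0$ for any $i$ and any $n\geq s+1$. So the substance is the forward direction.

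For the forward implication, I would assemble the hypotheses of Corollary \ref{corofthm-1}(ii) with $N=\up{\varphi^i}R$. Two facts are needed. First, by (\ref{rmk0})(ii), under the present assumptions (one-dimensional, F-finite, Cohen-Macaulay, characteristic $p$) the module $\up{\varphi^i}R$ is a rigid-test module over $R$ for all sufficiently large $i$; this supplies the rigid-test hypothesis of the corollary. Second, since $R$ is F-finite, $\up{\varphi^i}R$ is a finitely generated $R$-module, and because $R$ is one-dimensional Cohen-Macaulay, $\depth_R(\up{\varphi^i}R)=1=\depth(R)$, so the assumption $n\geq 1$ already ensures $n\geq\depth(N)$. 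Hence Corollary \ref{corofthm-1}(ii) applies and yields $\pd(M)\leq n-1<\infty$.

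The argument is almost entirely a packaging of the main theorem; the only nontrivial input is the characteristic-$p$ fact that $\up{\varphi^i}R$ becomes rigid-test for $i\gg 0$ in the one-dimensional F-finite Cohen-Macaulay setting, which is precisely what (\ref{rmk0})(ii) isolates. Thus the main (and only) obstacle in a self-contained treatment would be proving that statement, but here we may invoke it as a black box and the corollary follows in a couple of lines.
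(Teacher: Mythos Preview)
Your proposal is correct and matches the paper's approach: the paper simply remarks that the result ``in view of (\ref{rmk0})(ii), is immediate from Corollary \ref{corofthm-1}'', which is exactly the combination you invoke. One small simplification: since $\depth(R)=1$ here, you could equally well apply Corollary \ref{corofthm-1}(i) directly and bypass the computation of $\depth_R(\up{\varphi^i}R)$, though your verification that this depth equals $1$ is correct.
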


Corollary \ref{corofthm-1} yields a characterization of regularity in terms of $\cpd$ and $\cid$ dimensions of strongly-rigid modules; see also (\ref{stronglyrigid}) and (\ref{Cpd}).

\begin{cor} \label{cpcidcor} Let $R$ be a local ring, $C$ a semidualizing $R$-module and let $M$ be a nonzero strongly-rigid $R$-module. Assume either $\cpd(M)<\infty$ or $\cid(M)<\infty$. Then $R$ is regular.
\end{cor}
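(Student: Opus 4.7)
The plan is to reduce both hypotheses to a situation where Corollary \ref{corofthm-1}(i) applies, and then to exploit the following preliminary observation: a nonzero strongly-rigid module $M$ with $\pd(M)=s<\infty$ forces $R$ to be regular. Indeed, $\Tor^R_{s+1}(M,Y)=0$ for every $R$-module $Y$, and since $s+1\geq 1$, (\ref{stronglyrigid}) immediately yields $\pd(Y)<\infty$, whence every $R$-module has finite projective dimension. A second, equally elementary input is that strongly-rigid modules are test modules, which is a direct consequence of comparing the contrapositives of (\ref{Test}) and (\ref{stronglyrigid}).

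For the case $\cid(M)<\infty$, the plan is to combine the last remark with (\ref{T-W}) to conclude $C\cong R$ and $\id(M)<\infty$. Then for any nonzero $R$-module $X$ one has $\Ext^n_R(X,M)=0$ for every $n\geq\max\{\depth(R),\id(M)+1\}$, so Corollary \ref{corofthm-1}(i) applied to the strongly-rigid module $M$ gives $\pd(X)<\infty$. Since $X$ is arbitrary, $R$ is regular.

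For the case $\cpd(M)<\infty$, the plan is to apply (\ref{T-W}) in its dual form to obtain $\Ext^i_R(C,M)=0$ for all $i\geq 1$. Feeding this vanishing (for any single $n\geq\max\{1,\depth(R)\}$) into Corollary \ref{corofthm-1}(i), with first argument $C$ and strongly-rigid second argument $M$, forces $\pd(C)<\infty$. Since $C$ is semidualizing, (\ref{Cpd}) then gives $C\cong R$, whence $\pd(M)=\cpd(M)<\infty$, and the preliminary observation closes the argument.

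No substantial obstacle is expected: the proof is a clean two-case application of Corollary \ref{corofthm-1}(i) together with the Takahashi--White vanishing (\ref{T-W}) and the rigidity of semidualizing modules of finite projective dimension. The only minor bookkeeping is arranging the threshold $n\geq\depth(R)$ required by Corollary \ref{corofthm-1}(i), which in both cases is easily achieved by taking $n$ large.
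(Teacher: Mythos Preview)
Your proof is correct and follows essentially the same approach as the paper's: both cases reduce via (\ref{T-W}) and Corollary \ref{corofthm-1}(i) to a strongly-rigid (hence test) module of finite projective dimension, which forces regularity. The paper is slightly more terse---using $X=k$ directly in the $\cid$ case and compressing the intermediate chain $\pd(C)<\infty \Rightarrow C\cong R \Rightarrow \pd(M)=\cpd(M)<\infty$ in the $\cpd$ case---but the logic is identical.
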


\begin{proof} We start by noting that $M$ is a test module; see  (\ref{Test}). Assume first $\cid(M)<\infty$. Then it follows from (\ref{T-W}) that $\id(M)<\infty$, i.e., $\Ext_R^i(k,M)=0$ for all $i\gg 0$. Now Corollary \ref{corofthm-1}(i) implies that $\pd(k)<\infty$ so that $R$ is regular.

Next assume $\cpd(M)<\infty$. Then it follows from (\ref{T-W}) that $\Ext^i_R(C,M)=0$ for all $i\geq 1$. Hence we can use Corollary \ref{corofthm-1}(i) once more and deduce that $\pd(M)<\infty$. This implies that $R$ is regular.
\end{proof}

A special case of Corollary \ref{cpcidcor} is a characterization of regularity in terms of integrally closed $\fm$-primary ideals; see (\ref{inttest}).

\begin{cor} Let $(R, \fm)$ be a local ring and let $I$ be an integrally closed $\fm$-primary ideal of $R$. Then $R$ is regular if and only if there exists a semidualizing $R$-module $C$ such that $\cid(\Omega^{n}I)<\infty$ or $\cpd(\Omega^{n}I)<\infty$ for some nonnegative integer $n$. In particular, $R$ is regular if and only if $\id(I)<\infty$.
\end{cor}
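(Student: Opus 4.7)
The plan is to reduce this to Corollary~\ref{cpcidcor} applied to $M = \Omega^{n}I$. First, I would recall from~(\ref{inttest}) that every integrally closed $\fm$-primary ideal $I$ is a rigid-test module, and hence in particular strongly-rigid. The key preliminary observation is that strong-rigidity is preserved under taking syzygies: if $M$ is strongly-rigid and $\Tor^{R}_{n}(\Omega M, N) = 0$ for some $n \geq 1$, the standard dimension-shifting isomorphism $\Tor^{R}_{n}(\Omega M, N) \cong \Tor^{R}_{n+1}(M, N)$ yields $\Tor^{R}_{n+1}(M,N) = 0$, which forces $\pd(N) < \infty$ by strong-rigidity of $M$. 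Iterating, $\Omega^{n}I$ is strongly-rigid for every $n \geq 0$.

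With this in place, the forward direction of the corollary is immediate: if $\cpd(\Omega^{n}I) < \infty$ or $\cid(\Omega^{n}I) < \infty$ for some semidualizing $R$-module $C$ and some $n \geq 0$, then applying Corollary~\ref{cpcidcor} to $M = \Omega^{n}I$ directly gives that $R$ is regular. Conversely, if $R$ is regular, I would simply take $C = R$ (which is semidualizing) and $n = 0$; then $\cpd(I) = \pd(\Hom_{R}(R,I)) = \pd(I) < \infty$, since every finitely generated module over a regular local ring has finite projective dimension.

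For the ``in particular'' assertion, specializing to $C = R$ gives $\cid(I) = \id(R \otimes_{R} I) = \id(I)$, so finiteness of $\id(I)$ is a special case of finiteness of $\cid(\Omega^{n}I)$ with $n=0$, and therefore forces $R$ to be regular by the general statement just proved. The remaining point is the converse — that regularity of $R$ implies $\id(I) < \infty$ — which I would deduce by induction on $\pd(I)$: the base case where $I$ is free follows from $\id(R) = \depth(R) < \infty$, and the inductive step uses the standard inequality $\id(M) \leq \max\{\id(K) - 1, \id(F)\}$ coming from a short exact sequence $0 \to K \to F \to M \to 0$ with $F$ free. The only substantive input in the whole argument is the syzygy stability of strong-rigidity, which is elementary via dimension shifting, so no serious obstacle is anticipated; the rest is a direct application of the machinery already developed.
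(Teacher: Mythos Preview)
Your proposal is correct and follows essentially the same route as the paper, which simply cites (\ref{inttest}) together with Corollary~\ref{cpcidcor}; note that (\ref{inttest}) already records that $\Omega^{i}(R/I)$ is rigid-test for all $i\geq 1$, so your dimension-shifting argument for syzygy stability of strong-rigidity is correct but redundant. One tiny point you should address: Corollary~\ref{cpcidcor} requires $M\neq 0$, so you should remark that if $\Omega^{n}I=0$ then $\pd(I)<\infty$, and since $I$ is a test module this already forces $R$ to be regular.
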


The conclusions of next corollaries, \ref{corofthm0} and \ref{corExt}, are known over complete intersection rings; see \cite[3.6]{Sa1}. Here we are able to show that these results carry over to AB rings. Recall that every complete intersection ring is AB, but not vice versa; see the paragraph preceding (\ref{conds}). Furthermore, in Corollary \ref{Extcor}, we obtain a nonvanishing result for Ext over hypersurfaces that are in the form of (\ref{Mark}).

\begin{cor} \label{corofthm0} Let $R$ be a local AB ring, and let $M$ and $N$ be nonzero $R$-modules. Assume $N$ is Tor-rigid and that $\Ext^n_{R}(M,N)=0$ for some $n\geq \depth(N)$. Then $\sup\{i\in \ZZ: \Ext^i_R(M,N)\neq 0\}=\G-dim(M)=\depth(R)-\depth(M)\leq n-1$.
\end{cor}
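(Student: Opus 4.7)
The plan is three-step. First, apply Theorem \ref{lemImp}(2)(ii) with $\H = \G-dim$ and $r = 0$. The hypotheses all hold: $M$ trivially satisfies $(\widetilde{\S}_0)$, the inequality $n \geq \depth(N)$ is assumed, and by (\ref{conds})(ii) the depth-formula condition (\ref{dfcond}) is valid for $(\mathcal{T}_{n+1}M, N, \G-dim)$ because $R$ is AB and $N$ is Tor-rigid. The theorem delivers $g := \G-dim(M) \leq n - 1$. Second, since AB rings are Gorenstein, finiteness of $\G-dim(M)$ triggers the Auslander-Bridger formula $g = \depth(R) - \depth(M)$, producing the middle equality and the bound $\leq n-1$ at a single stroke.

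Third, and most substantively, establish the sup equality. The vanishing $\Ext^n_R(M,N) = 0$ together with Tor-rigidity of $N$ gives, via (\ref{Ausobs}), the crucial facts $\Tor_i^R(\mathcal{T}_{n+1}M, N) = 0$ for all $i \geq 1$ and $\Ext^n_R(M, R) = 0$; combining the latter with (\ref{purp})(ii) yields $\Ext^i_R(M, R) = 0$ for all $i > g$. Since every syzygy $\Omega^j M$ with $j \geq g$ is totally reflexive, a standard stable-equivalence argument using the dualized resolution identifies $\mathcal{T}_{j+1}M \approx \Omega^{n-j}(\mathcal{T}_{n+1}M)$ for each $g \leq j \leq n$, and this transfers the Tor-vanishing to $\Tor_i^R(\mathcal{T}_{j+1}M, N) = 0$ for all $i \geq 1$ throughout that range. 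Feeding these into the exact sequence (\ref{AuBrsequence})(i), the case $j = g$ collapses to an isomorphism $\Ext^g_R(M, N) \cong \Ext^g_R(M, R) \otimes_R N$, which is nonzero by Nakayama since both tensor factors are nonzero; and for each $g < j \leq n$, the same sequence collapses to $\Ext^j_R(M, N) = 0$.

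The hard part will be extending the upper-bound vanishing past the index $j = n$. For $j > n$ the module $\mathcal{T}_{j+1}M$ becomes a cosyzygy of $\mathcal{T}_{n+1}M$, and Tor-rigidity of $N$ by itself does not propagate the vanishing downward from indices $\geq 2$ to index $1$. Here the AB hypothesis on $R$ is essential; the plan is to run an induction on $j \geq n$, using that if $\Ext^j_R(M,N) = 0$ then (\ref{Ausobs}) produces $\Tor_i^R(\mathcal{T}_{j+1}M, N) = 0$ for $i \geq 1$, and then combining the resulting short exact sequence $0 \to \mathcal{T}_{j+1}M \to F \to \mathcal{T}_{j+2}M \to 0$ afforded by the stable equivalence $\Omega\,\mathcal{T}_{j+2}M \approx \mathcal{T}_{j+1}M$ with AB-specific comparisons to force $\Tor_1^R(\mathcal{T}_{j+2}M, N) = 0$ and hence $\Ext^{j+1}_R(M, N) = 0$. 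A more conceptual alternative would appeal to Tate cohomology over the Gorenstein ring $R$, upgrading the ordinary Tor vanishing on the totally reflexive module $\mathcal{T}_{n+1}M$ to vanishing of complete cohomology in every degree under AB, and then reading off $\Ext^j_R(M,N) = 0$ for all $j > g$ from (\ref{AuBrsequence})(i).
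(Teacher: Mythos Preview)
Your proposal is correct and aligns closely with the paper's proof. The first two steps match exactly: apply Theorem \ref{lemImp}(2)(ii) with $r=0$ via (\ref{conds})(ii) to get $\G-dim(M)\leq n-1$, then invoke Auslander--Bridger. For the sup equality, the paper takes precisely your ``more conceptual alternative'': it cites \cite[3.2 and 6.1]{CJ} to reduce to eventual Ext-vanishing, then uses Tor-rigidity to get $\Tor_i^R(\mathcal{T}_{n+1}M,N)=0$ for $i\geq 1$, upgrades this to vanishing of $\widehat{\Tor}_i^R(\mathcal{T}_{n+1}M,N)$ for all $i\in\ZZ$ via \cite[3.2]{CJ}, and reads off $\Ext^i_R(M,N)=0$ for $i\geq n$ through the Tate duality \cite[4.4.7]{AvBu}.

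Two remarks. First, your explicit treatment of nonvanishing at $g$ and vanishing in the interval $(g,n]$ is more hands-on than the paper, which outsources the entire sup identification to \cite{CJ}; your argument there is valid and self-contained. Second, your first idea for $j>n$ --- the induction with unspecified ``AB-specific comparisons'' --- is not a workable plan as stated: from $\Tor_i^R(\mathcal{T}_{j+1}M,N)=0$ for $i\geq 1$ and the exact sequence $0\to \mathcal{T}_{j+1}M\to F\to \mathcal{T}_{j+2}M\to 0$ you only get $\Tor_i^R(\mathcal{T}_{j+2}M,N)=0$ for $i\geq 2$, and rigidity of $N$ goes the wrong direction to force $i=1$. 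You correctly sensed this and pivoted to Tate cohomology; that pivot is the paper's actual argument, so commit to it.
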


\begin{proof} The bound on $\G-dim(M)$ follows from Theorem \ref{lemImp}(ii): the conditions in (\ref{dfcond}) hold for $(\mathcal{T}_{n+1}M, N, \G-dim)$; see (\ref{conds})(ii).
Hence, since $R$ is an AB ring, it suffices to prove by \cite[3.2 and 6.1]{CJ} that $\Ext^i_R(M,N)=0$ for all $i\gg 0$.

As $\Ext^n_R(M,N)=0$ and $N$ is a Tor-rigid module, it follows from the exact
sequence (\ref{AuBrsequence})(i) that $\Tor_i^R(\mathcal{T}_{n+1}M,N)=0$ for all $i\geq 1$. This shows, by\cite[3.2]{CJ}, that Tate Tor groups $\widehat{\Tor}_i^R(\mathcal{T}_{n+1}M,N)$ vanish for all
$i\in\mathbb{Z}$. Note that $\G-dim(M)\leq n-1$, and $\G-dim(\Omega^nM)=0$; see \cite[3.13]{AuBr}. We now use
\cite[4.4.7]{AvBu} and conclude, for all $i\geq n+1$, that:
\[\begin{array}{rl}
\Ext^{i}_R(M,N) \cong\Ext^{i-n}_R(\Omega^{n}M,N) & \hspace{-0.1in} \cong \widehat{\Ext}^{i-n}_R(\Omega^nM,N))\\
& \hspace{-0.1in} \cong\widehat{\Tor}_{-i+n-1}^R(\Hom(\Omega^nM,R),N)\\
& \hspace{-0.1in} \cong\widehat{\Tor}_{-i+n+1}^R(\mathcal{T}_{n+1}M,N).
\end{array}\]
Therefore we have that $\Ext^i_R(M,N)=0$ for all $i\geq n$, and this proves our claim.
\end{proof}

\begin{cor} \label{corExt} Let $R$ be a local AB ring, and let $M$ and $N$ be nonzero $R$-modules such that $N$ is Tor-rigid (e.g., $N=k$).
Assume $\depth(N)\leq \G-dim(M)$. Then $\Ext^i_{R}(M,N)\neq 0$ for all $i$, where $\depth(N) \leq i \leq \G-dim(M)$. In particular, if $\depth(M)=0$, then $\Ext^i_{R}(M,N)\neq 0$ for all $i$, where $\depth(N)\leq i\leq \dim(R)$.
\end{cor}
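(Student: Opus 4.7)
The plan is to derive Corollary~\ref{corExt} as an immediate consequence of Corollary~\ref{corofthm0} by reading that result in the contrapositive. Corollary~\ref{corofthm0} asserts, under the standing hypotheses that $R$ is a local AB ring and $N$ is a nonzero Tor-rigid $R$-module, that if $\Ext^n_R(M,N)=0$ for some integer $n\geq \depth(N)$, then $\G-dim(M)\leq n-1$. Equivalently: for any integer $i$ satisfying both $i\geq \depth(N)$ and $i\leq \G-dim(M)$, the group $\Ext^i_R(M,N)$ must be nonzero. Since the hypothesis $\depth(N)\leq \G-dim(M)$ makes the range $\depth(N)\leq i\leq \G-dim(M)$ nonempty, applying this observation to each such $i$ (taken as the integer $n$ in Corollary~\ref{corofthm0}) yields precisely the claimed nonvanishing.

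For the ``In particular'' statement with $\depth(M)=0$, the first step is to record that every AB ring is by definition Gorenstein, so every finitely generated $R$-module has finite Gorenstein dimension and the Auslander--Bridger formula applies. This gives $\G-dim(M)=\depth(R)-\depth(M)=\depth(R)$, and because Gorenstein rings are Cohen--Macaulay, $\depth(R)=\dim(R)$. Hence $\G-dim(M)=\dim(R)$, and substituting this into the first part of the corollary yields the nonvanishing of $\Ext^i_R(M,N)$ for all $i$ with $\depth(N)\leq i\leq \dim(R)$.

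There is no genuine obstacle in the argument: Corollary~\ref{corofthm0} has already done the analytic work via the spectral sequence/transpose machinery and the characterization of AB rings. The only bookkeeping is to phrase the contrapositive correctly (ensuring the integer $i$ one wants to consider lies in the admissible range $i\geq \depth(N)$ so that Corollary~\ref{corofthm0} applies), and in the special case to invoke the Auslander--Bridger formula, whose applicability is automatic over a Gorenstein ring. If one were ever to remove the AB hypothesis, the argument would collapse because the key input -- namely that vanishing of a single $\Ext^n$ forces finiteness and a sharp bound on $\G-dim(M)$ -- would no longer be available.
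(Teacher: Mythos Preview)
Your proof is correct and matches the paper's own argument: the paper likewise says the first part is clear from Corollary~\ref{corofthm0} (which is exactly your contrapositive reading), and for the second part simply notes that $\depth(M)=0$ gives $\G-dim(M)=\dim(R)$, which you have justified via the Auslander--Bridger formula and the Gorensteinness of AB rings.
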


\begin{proof} The first part is clear from Corollary \ref{corofthm0}. If $\depth(M)=0$, then $\G-dim(M)=\dim(R)$ so that the second part follows.
\end{proof}

If $R$ is a hypersurface (quotient of an equi-characteristic regular local ring) and $N$ is an $R$-module such that $\id(N)<\infty$, then $\pd(N)<\infty$ and hence it follows from a result of Lichtenabum \cite[Theorem 3]{Li} that $N$ is Tor-rigid. Consequently, when $R$ is such a hypersurface, and $M$ and $N$ are nonzero $R$-modules such that $\pd(M)<\infty$, $\depth(M)=0$ and $\id(N)<\infty$, Corollary \ref{corExt} implies that $\Ext^i_{R}(M,N)\neq 0$ for all $i$, where $\depth(N)\leq i\leq \dim(R)$. Over certain hypersurfaces, we know that all modules are Tor-rigid so that the nonvanishing of $\Ext^i_{R}(M,N)$ occurs without any restriction on $N$. For example, Corollary \ref{corExt} and (\ref{Mark}) yield:

\begin{cor} \label{Extcor} Let $A=Q/(f)$, where $Q=k[x_{1}, \dots, x_{2s+1}]$, with $s\geq 1$, is a polynomial ring over a perfect field $k$, and $f$ is a nonconstant polynomial in $Q$.
Set $R=A_{\fm}$, where $\mathfrak{m}=(x_{1}, \dots, x_{2s+1})A$ and assume $A_{p}$ is a regular for all $\fp \in \Spec(A)-\{\fm\}$. If $M$ and $N$ are nonzero $R$-modules such that $\depth(M)=0$, then $\Ext^i_{R}(M,N)\neq 0$ for all $i$, where $\depth(N)\leq i\leq \dim(R)$.
\end{cor}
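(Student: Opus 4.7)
The plan is to deduce this corollary as a direct application of Corollary \ref{corExt}; essentially all the substantive content has been packaged into that corollary and into the cited fact (\ref{Mark}), so the only task here is hypothesis-checking.

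To apply Corollary \ref{corExt} I must know that $R$ is AB and that $N$ is Tor-rigid. The first is immediate: $R = A_{\fm}$ is the localization of a hypersurface, hence itself a hypersurface, hence a complete intersection; and every complete intersection ring is AB, as recorded in the paragraph preceding (\ref{conds}). The second holds by (\ref{Mark}): the standing assumptions on $k$, $Q$, $f$, and $R$ in the statement are precisely those under which every finitely generated $R$-module is Tor-rigid, so in particular $N$ is Tor-rigid.

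Next, since $R$ is a complete intersection it is Gorenstein, so every finitely generated $R$-module has finite Gorenstein dimension, and the Auslander--Bridger formula yields
$$\G-dim(M) \;=\; \depth(R) - \depth(M) \;=\; \dim(R) - 0 \;=\; \dim(R).$$
The inequality $\depth(N) \leq \dim(R)$ holds automatically for any nonzero finitely generated module over a local ring, so $\depth(N) \leq \G-dim(M)$. Corollary \ref{corExt} then produces $\Ext^i_R(M,N) \neq 0$ for all integers $i$ with $\depth(N) \leq i \leq \G-dim(M) = \dim(R)$, which is exactly the desired conclusion.

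The main potential obstacle in any proof of this sort — establishing universal Tor-rigidity over the hypersurface $R$ — is not encountered here, as it has been absorbed into the external input (\ref{Mark}). Once that input and the AB property of complete intersections are in hand, the statement is a formal specialization of Corollary \ref{corExt}, with no further computation required.
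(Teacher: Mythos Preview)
Your proof is correct and follows the same route as the paper, which simply states that Corollary \ref{corExt} together with (\ref{Mark}) yields the result. You have merely made the hypothesis-checking explicit (that $R$ is a hypersurface hence AB, and that $N$ is Tor-rigid by (\ref{Mark})); indeed the ``in particular'' clause of Corollary \ref{corExt} already covers the $\depth(M)=0$ case, so your computation of $\G-dim(M)=\dim(R)$ is just spelling out what that clause encapsulates.
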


\section{Gorenstein injective dimension of strongly-rigid modules} Let $(R, \fm,k)$ be a local ring. If $R$ is Gorenstein and the injective dimension $\id(\fm)$ of the maximal ideal $\fm$ is finite, so is the injective dimension of $k$, and hence the Auslander-Buchsbaum formula implies that $R$ is regular; see \cite[3.1.26]{BH}. Our results in this section originated in an attempt to answer the following question:

\begin{ques} \label{soru}
Let $(R, \fm)$ be a local ring. Assume $\id(\fm)<\infty$. Then must $R$ be Gorenstein, or equivalently, must $R$ be regular?
\end{ques}

Levin and Vasconcelos \cite[Theorem 1.1]{LV} proved that $R$ is regular if $\pd(\fm M)<\infty$ for an $R$-module $M$ with $\fm M\neq 0$. They also remarked that an argument analogous to that of \cite[Theorem 1.1]{LV} would work just as well for finite injective dimension.

Avramov \cite{LLA} pointed out that an affirmative answer to Question \ref{soru} came out in a discussion with himself and H.-B. Foxby in the summer of 1983. He also referred us to  Lescot's explicit computation of the Bass series of $\fm$ for an example of a published treatment of this fact; see \cite[1.8]{Lescot}. Avramov \cite[Theorem 4]{LAER} proved that any submodule $L$ of a finitely generated $R$-module $M$ satisfying $L \supseteq \mathfrak{m}M \supsetneq \mathfrak{m}L$ has the same injective complexity and curvature as the residue field $k$. It follows, for example, if $\fm^nM\neq 0$ and $\id(\fm^nM)<\infty$, then $R$ is regular; see also \cite[Corollary 5]{LAER} and the remark following it. A very special case of this result -- the case where $n=1$ and $M=R$, i.e., the case where $\id(\fm)<\infty$ -- also follows from (\ref{Gor}).

The Gorenstein injective dimension, introduced by Enochs and Jenda \cite{EJ}, is a refinement of the classical injective dimension. We use Corollary \ref{corofthm-1} and prove that $R$ is Gorenstein if the Gorenstein injective dimension of an integrally closed $\fm$-primary ideal of $R$ is finite; see Corollary \ref{corsonuc1}. This, in particular, refines Question \ref{soru} and establishes that $R$ is Gorenstein if and only if the Gorenstein injective dimension of the maximal ideal $\fm$ is finite. We proceed by recalling some definitions:

\begin{chunk} \label{tanim} (\cite{EJ}; see \cite[6.2.2]{Larsbook}) An $R$-module $M$ is said to be Gorenstein injective if there is an exact sequence
$I_{\bullet}=(\cdots \to I_{1} \stackrel{\rm \partial_{1}}{\longrightarrow} I_{0} \stackrel{\rm \partial_{0}}{\longrightarrow}  I_{-1} \to \cdots)$ of injective $R$-modules such that $M\cong \ker(\partial_{0})$ and $\Hom_{R}(E,I_{\bullet})$ is exact for any injective $R$-module $E$. The Gorenstein injective dimension of $M$, $\Gid(M)$, is defined as the infimum of $n$ for which there exists an exact sequence $0 \to M \to I_{0} \to \cdots \to I_{-n}\to0$, where  each $I_{i}$ is Gorenstein injective.

The Gorenstein injective dimension is a refinement of the classical injective dimension: $\Gid(M)\leq \id(M)$, with equality if $\id(M)<\infty$; see \cite[6.2.6]{Larsbook}. It follows that every module over a Gorenstein ring has finite Gorenstein injective dimension. Hence, if $(R, \fm)$ is Gorenstein, but not regular, then $\Gid(k)<\infty=\id(k)$.
\end{chunk}

Cohen-Macaulay local rings that admit a nonzero strongly-rigid module of finite Gorenstein injective dimension are Gorenstein.

\begin{prop} \label{sonuc} Let $R$ be a Cohen-Macaulay local ring. If $\Gid(M)<\infty$ for some nonzero strongly-rigid $R$-module $M$, then $R$ is Gorenstein.
\end{prop}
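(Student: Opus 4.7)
The plan is to apply Foxby's criterion (\ref{Foxby}) by producing a nonzero finitely generated $R$-module of simultaneously finite projective and injective dimension; the natural candidate is the dualizing module $\omega$ of $R$. More precisely, I would aim to show $\pd(\omega)<\infty$ by combining the hypothesis $\Gid(M)<\infty$ with the strong-rigidity of $M$ via Corollary \ref{corofthm-1}(i).

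First, I would reduce to the case where $R$ admits a dualizing module. Since $R$ is Cohen-Macaulay, the completion $\widehat{R}$ has one, call it $\omega$. One needs to verify that the hypotheses survive: finiteness of $\Gid$ descends along faithfully flat base change (standard), Gorensteinness of $\widehat{R}$ implies Gorensteinness of $R$ (standard), and, most delicately, $\widehat{M}$ should be strongly-rigid over $\widehat{R}$. This last point is the main technical obstacle, since a finitely generated $\widehat{R}$-module need not come from a finitely generated $R$-module by completion, and strong-rigidity is a property quantified over all finitely generated modules; a workaround is simply to assume $R$ has a dualizing module (equivalently, is a homomorphic image of a Gorenstein local ring), and carry out the rest of the argument in that setting.

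Once $\omega$ is in hand, I would invoke the standard characterization from Gorenstein homological algebra: over a Cohen-Macaulay local ring with dualizing module $\omega$, finiteness of $\Gid(M)$ places $M$ in the Bass class $\mathcal{B}(\omega)$, and in particular $\Ext^{i}_{R}(\omega,M)=0$ for every $i\geq 1$. Choosing any $n\geq\depth(R)$ gives a single vanishing $\Ext^{n}_{R}(\omega,M)=0$, which is exactly the hypothesis of Corollary \ref{corofthm-1}(i) applied with ``$M$''$=\omega$ and ``$N$''$=M$ (strongly-rigid). The conclusion is $\pd(\omega)\leq n-1<\infty$.

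Finally, $\omega$ is maximal Cohen-Macaulay (as $R$ is Cohen-Macaulay), so $\depth(\omega)=\depth(R)$, and the Auslander-Buchsbaum formula collapses $\pd(\omega)$ to $0$. Thus $\omega$ is free; since a dualizing module is indecomposable, $\omega\cong R$, and $R$ is Gorenstein. In summary, the work is concentrated in the reduction step and the extraction of a single $\Ext$-vanishing from finiteness of $\Gid$; everything else is a direct appeal to Corollary \ref{corofthm-1}(i) and standard depth bookkeeping.
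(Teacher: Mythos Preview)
Your approach is essentially the same as the paper's: produce a finitely generated module $N$ of finite injective dimension with $\Ext^i_R(N,M)=0$ for large $i$, apply Corollary~\ref{corofthm-1}(i) (with the roles of $M$ and $N$ swapped) to get $\pd(N)<\infty$, then invoke Foxby's criterion~(\ref{Foxby}). The paper, however, does not single out $N=\omega$ and does not pass to the completion: it cites Holm \cite[2.22]{Henrik}, which gives $\Ext^i_R(N,M)=0$ for all $i>\Gid(M)$ and \emph{any} $N$ with $\id(N)<\infty$, and works directly over $R$. This sidesteps the completion issue you flag; as you correctly note, preservation of strong-rigidity under completion is not known, so your reduction step is both the weak point of your argument and, in light of the paper's route, unnecessary. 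Your final paragraph, deducing $\omega\cong R$ via Auslander--Buchsbaum and indecomposability, is also superfluous: once a single nonzero finitely generated module has both $\pd$ and $\id$ finite, (\ref{Foxby}) gives Gorensteinness immediately.
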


\begin{proof} Assume $\Gid(M)<\infty$ for some nonzero strongly-rigid $R$-module $M$. Then, since $R$ is Cohen-Macaulay, there exists a nonzero finitely generated $R$-module $N$ such that $\id(N)<\infty$ and $\Ext^{i}_{R}(N,M)=0$ for all $i\gg 0$; see \cite[2.22]{Henrik}. Therefore Corollary \ref{corofthm-1}(i) shows that $\pd(N)<\infty$, i.e., $R$ is Gorenstein; see (\ref{Foxby}). 
\end{proof}

In general it is not known whether or not a local ring admitting a nonzero module of finite Gorenstein injective dimension must be Cohen-Macaulay; see \cite{Larsbook}. Hence, in view of the foregoing result, it seems worth raising the following question:

\begin{ques} \label{CMsoru} Let $R$ be a local ring and let $M$ be a nonzero strongly-rigid module over $R$. If $\Gid(M)<\infty$, then must $R$ be Gorenstein?
\end{ques}

We are able to give an affirmative answer to Question \ref{CMsoru} when $M$ is an integrally closed $\fm$-primary ideal. For that we need the following result of Yassemi:

\begin{chunk} \label{yas0} (Yassemi \cite[1.3]{YasGdim}) Let $R$ be a local ring and let $M$ be a nonzero $R$-module. Assume $\Gid(M)<\infty$. Assume further that $\dim(M)=\dim(R)$. Then $R$ is Cohen-Macaulay.
\end{chunk}

An integrally closed $\fm$-primary ideal $I$ of $(R, \fm)$ is a strongly-rigid module with $\dim(I)=\dim(R)$; see (\ref{inttest}). So we deduce from Proposition \ref{sonuc} and  (\ref{yas0}) that:

\begin{cor} \label{corsonuc1} A local ring $(R, \fm)$ is Gorenstein  if and only if $\Gid(I)<\infty$ for some integrally closed $\fm$-primary ideal $I$ of $R$. In particular, $(R, \fm)$ is Gorenstein if and only if $\Gid(\fm)<\infty$.
\end{cor}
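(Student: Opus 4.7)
The plan is to prove the two directions of the stated equivalence by combining three ingredients already recorded in the paper: the fact that integrally closed $\fm$-primary ideals are strongly-rigid (rigid-test) modules of dimension equal to $\dim(R)$; Yassemi's result (\ref{yas0}); and Proposition \ref{sonuc}.

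For the forward implication, I would simply observe that if $R$ is Gorenstein, then every finitely generated $R$-module has finite Gorenstein injective dimension (see \cite{Larsbook} or the discussion in (\ref{tanim})). In particular, $\Gid(I) < \infty$ for any integrally closed $\fm$-primary ideal $I$, and $\Gid(\fm) < \infty$ since $\fm$ is itself an integrally closed $\fm$-primary ideal.

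For the reverse implication, I would assume $\Gid(I) < \infty$ for some integrally closed $\fm$-primary ideal $I$ of $R$. By (\ref{inttest}), $I$ is a rigid-test, and in particular strongly-rigid, $R$-module. Since $I$ is $\fm$-primary, $R/I$ has finite length, so $\dim(I) = \dim(R)$. Yassemi's theorem (\ref{yas0}) then forces $R$ to be Cohen-Macaulay. At that point I can invoke Proposition \ref{sonuc}: $R$ is a Cohen-Macaulay local ring admitting a nonzero strongly-rigid module ($I$) of finite Gorenstein injective dimension, hence $R$ is Gorenstein.

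The "in particular" statement about $\fm$ then follows because the maximal ideal $\fm$ is an integrally closed $\fm$-primary ideal (being a prime ideal, it is integrally closed in $R$). There is no genuine obstacle here: the proof is a direct assembly of earlier results, and the only thing to double-check is that $\dim(I) = \dim(R)$ for an $\fm$-primary ideal, which is immediate from the fact that $\Supp(I) \supseteq \Spec(R) \setminus V(\ann(I))$ contains all non-maximal primes in the support of $R$, or more simply from the short exact sequence $0 \to I \to R \to R/I \to 0$ with $R/I$ of finite length.
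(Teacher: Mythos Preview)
Your proposal is correct and follows essentially the same approach as the paper: the paper deduces the corollary directly from (\ref{inttest}), (\ref{yas0}), and Proposition~\ref{sonuc}, which is precisely the chain of reasoning you outline. You simply make the forward implication and the ``in particular'' clause about $\fm$ more explicit than the paper does.
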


\begin{rmk} \label{Gidremark} It is known that if $\Gid(k)<\infty$ or $\Gid(R)<\infty$, then $R$ is Gorenstein; see \cite[6.2.7]{Larsbook} and \cite[2.1]{Henrik2}. However we do not know whether the finiteness of $\Gid(\fm)$ directly implies the finiteness of $\Gid(k)$ or $\Gid(R)$ via the short exact sequence $0\to \fm \to R \to k \to 0$. Thus, as far as we know, Corollary \ref{corsonuc1}, even for the special case where $I=\fm$, is new.
\end{rmk}

Our final aim is to show in Proposition \ref{sonuc2} that the finiteness of $\Gcid(\fm)$ for a semidualizing module $C$ detects the dualizing module, i.e., forces $C$ to be dualizing. We first record a few preliminary results.

\begin{chunk} \label{Cgid} Let $C$ be a semidualizing $R$-module; see (\ref{Cpd}). Then the \emph{$C$-Gorenstein injective dimension} $\Gcid(M)$ of a nonzero $R$-module $M$ can be defined as $\Gid_{R\ltimes C}(M)$, where $R\ltimes C$ is the trivial extension of $R$ by $C$; see Henrik and {J{\o}rgensen} \cite[2.16]{HHJP}. In particular, if $C=R$, then $\Gcid(M)=\Gid(M)$; see (\ref{tanim}).
\end{chunk}

Proposition \ref{corr0} is used for our proof of Proposition \ref{sonuc2} for the case where $R$ is Artinian; see also (\ref{stronglyrigid}).

\begin{prop} \label{corr0} Let $R$ be a Cohen-Macaulay local ring with a dualizing module and let $C$ be a  semidualizing $R$-module. Assume $M$ is a nonzero strongly-rigid module over $R$. Assume further $\Gcid(M)<\infty$. Then $C$ is dualizing.
\end{prop}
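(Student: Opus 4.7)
The strategy is to produce a nonzero $R$-module $N$ with $\pd_R(N)<\infty$ and $\id_R(C\otimes_R N)<\infty$, and then deduce $\id_R(C)<\infty$; a semidualizing module of finite injective dimension is automatically dualizing, by the remark preceding (\ref{Cpd}).

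The first step is to extract $N$. Using the identification $\Gcid(M)=\Gid_{R\ltimes C}(M)$ from (\ref{Cgid}), one passes to the trivial extension $R\ltimes C$, which is a Cohen-Macaulay local ring because $C$ is maximal Cohen-Macaulay (being semidualizing over a Cohen-Macaulay ring with dualizing). Applying \cite[2.22]{Henrik} over $R\ltimes C$ to the module $M$ and descending along the change-of-rings map $R\ltimes C\twoheadrightarrow R$, one obtains a nonzero finitely generated $R$-module $N$ with $\cid(N)<\infty$---equivalently, $\id_R(C\otimes_R N)<\infty$ and $\Tor_i^R(C,N)=0$ for all $i\geq 1$ by (\ref{T-W})---together with $\Ext_R^i(N,M)=0$ for all $i\gg 0$.

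Since $M$ is strongly-rigid, the vanishing $\Ext_R^n(N,M)=0$ for some $n\geq\depth(R)$ combined with Corollary \ref{corofthm-1}(i) (applied with the roles of the two modules interchanged) gives $\pd_R(N)<\infty$. Hence $N$ lies in the Auslander class $\mathcal{A}_C(R)$, and a finite free resolution of $N$ stays exact after tensoring with $C$, producing an exact complex
\[
0\to C^{b_s}\to C^{b_{s-1}}\to\cdots\to C^{b_0}\to C\otimes_R N\to 0
\]
whose rightmost term has finite injective dimension. In the Artinian setting needed for Proposition \ref{sonuc2}, $\pd_R(N)=0$ forces $N$ to be free, whence $C\otimes_R N$ is a nonzero direct sum of copies of $C$, and $\id_R(C)<\infty$ follows immediately.

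In the general Cohen-Macaulay case, one descends through the above complex by breaking it into short exact sequences. Since both $C\otimes_R N$ and each $C^{b_j}$ lie in the Bass class $\mathcal{B}_C(R)$---the former because every module of finite injective dimension does, the latter by the semidualizing property of $C$---and $\mathcal{B}_C(R)$ has favorable closure properties under extensions and syzygies, one propagates finite injective dimension upward through the resolution to conclude $\id_R(C)<\infty$. I expect this last propagation to be the main obstacle: the naive two-out-of-three principle for finite injective dimension in a short exact sequence does not by itself force $\id_R(C)<\infty$ from $\id_R(C\otimes_R N)<\infty$, and one must combine it with the Bass-class machinery---or with a spectral-sequence comparison of the Bass numbers $\mu^i(C)$ against those of $C\otimes_R N$ along the resolution---to rule out the possibility that the Bass numbers of $C$ fail to vanish eventually.
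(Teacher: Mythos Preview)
Your approach has two genuine gaps, and the paper takes a much shorter route that avoids both.

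\textbf{First gap: the descent step.} Applying \cite[2.22]{Henrik} over $R\ltimes C$ yields an $(R\ltimes C)$-module $N'$ with $\id_{R\ltimes C}(N')<\infty$ and $\Ext^{i}_{R\ltimes C}(N',M)=0$ for $i\gg 0$. You then assert that ``descending along $R\ltimes C\twoheadrightarrow R$'' produces an $R$-module $N$ with $\cid_R(N)<\infty$ and $\Ext^{i}_{R}(N,M)=0$ for $i\gg 0$. There is no mechanism for this. Restriction along $R\hookrightarrow R\ltimes C$ does not take finite injective dimension to finite $C$-injective dimension, and base change along $R\ltimes C\twoheadrightarrow R$ destroys both the injective-dimension bound and the Ext-vanishing (the Ext groups over the two rings are not comparable in the needed direction). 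This step is simply missing an argument.

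\textbf{Second gap: the final propagation.} You correctly identify this as the main obstacle, and it really is one. Given $\pd_R(N)<\infty$ and $\id_R(C\otimes_R N)<\infty$, the resolution $0\to C^{b_s}\to\cdots\to C^{b_0}\to C\otimes_R N\to 0$ does not force $\id_R(C)<\infty$ by two-out-of-three, and the Bass-class closure properties you invoke do not help: every term already lies in $\mathcal{B}_C(R)$, so membership there gives no new information. The spectral-sequence idea would require controlling higher differentials, which you do not do.

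\textbf{What the paper does instead.} The paper bypasses both difficulties with a single stroke: set $X=\Hom_R(C,\omega)$, which is again semidualizing and maximal Cohen--Macaulay. By \cite[4.6]{HHJP} the hypothesis $\Gcid(M)<\infty$ is equivalent to $M\in\mathcal{B}_X(R)$, which in particular gives $\Ext^{i}_{R}(X,M)=0$ for all $i\gg 0$ directly over $R$---no trivial extension, no descent. Corollary~\ref{corofthm-1}(i) then gives $\pd_R(X)<\infty$; since $X$ is semidualizing this forces $X\cong R$, hence $C\cong\omega$. The key move you are missing is to pass to the ``complementary'' semidualizing module $X=C^{\dagger}$ rather than to the trivial extension.
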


\begin{proof} Note that $\Ext^{i}_{R}(C,\omega)=0$ for all $i\geq 1$, where $\omega$ is the dualizing module. Hence $X=\Rhom_{R}(C,\omega)\simeq \Hom_{R}(C,\omega)$  is a maximal Cohen-Macaulay $R$-module. It follows from \cite[4.6]{HHJP} that $M$ is in the Bass class of $R$ with respect to $X$. In particular $\Ext^{i}_{R}(X, M)=0$ for all $i\gg 0$; see \cite[4.1]{HHJP}. Therefore Corollary \ref{corofthm-1}(i) implies that $C\cong \omega$.
\end{proof}

\begin{chunk} \label{Yass1} Let $M$ be a nonzero $R$-module and let $C$ be a  semidualizing $R$-module. Assume $\Gcid(M)<\infty$. If $\dim(M)=\dim(R)$, then $\dim_{R\ltimes C}(M)=\dim(R\ltimes C)$ so that, by (\ref{yas0}), $R$ is Cohen-Macaulay. Therefore, if $M$ is a strongly-rigid module, $\dim(M)=\dim(R)$ and $\depth(R)=0$, then $R$ is Artinian and it follows from Proposition \ref{corr0} that $C$ is dualizing.
\end{chunk}

For the rest of our arguments, $\overline{X}$ denotes $X/xX$, where $X$ is an $R$-module and $x$ is a non-zerodivisor on $R$.

\begin{chunk} \label{Serre} Let $(R, \fm)$ be a local ring and let $x\in\fm-\fm^2$ be a non-zerodivisor on $R$. Then the surjective $R$-linear map $f:\fm/x\fm \twoheadrightarrow \fm/xR$, given by  $f(y+x\fm)=y+xR$ for all $y\in \fm$, splits; see, for example, the proof of \cite[19.2]{Mat}. Therefore there exists an $\overline{R}$-module $N$ such that $\overline{\fm} \cong N\oplus \fm/xR$.
\end{chunk}

\begin{chunk} \label{modout} Let $R$ be a local ring and let $x\in R$. Assume $x$ is a non-zerodivisor on $R$. Then $x$ is also a non-zerodivisor on $R\ltimes C$, and hence the following holds:
\begin{equation} \tag{\ref{modout}.1}
\overline{R} \ltimes \overline{C} \cong \overline{R\ltimes C}
\end{equation}

Let $M$ be an $R$-module. Assume $x$ is also a non-zerodivisor on $M$. Assume further $\Gcid(M)<\infty$ for some semidualizing $R$-module $C$. Then, in view of (\ref{modout}.1), we deduce from \cite[Lemma 2]{SWY} that:
\begin{equation} \tag{\ref{modout}.2}
\Gid_{\overline{R\ltimes C}}(\overline{M})=\Gid_{\overline{R}\ltimes \overline{C}}(\overline{M})=\Gxcid_{\overline{R}}(\overline{M})<\infty.
\end{equation}
\end{chunk}

\begin{prop} \label{sonuc2} Let $R$ be a local ring. Assume at least one of the following conditions holds:
\begin{enumerate}[\rm(i)]
\item $\Gcid(\fm)<\infty$ for some semidualizing $R$-module $C$.
\item $\Gcid(M)<\infty$ for some maximal Cohen-Macaulay strongly-rigid module $M$.
\end{enumerate}
Then $C$ is dualizing.
\end{prop}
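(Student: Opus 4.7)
The plan is to handle (i) and (ii) in parallel. Since $\fm$ is prime, hence radical, hence integrally closed, $\fm$ is an integrally closed $\fm$-primary ideal, so (\ref{inttest}) makes it strongly-rigid; in each case we therefore have a strongly-rigid $R$-module $X$ with $\dim(X)=\dim(R)$ and $\Gcid(X)<\infty$ (take $X=\fm$ in (i), $X=M$ in (ii)). Invoking (\ref{Yass1}) forces $R$ to be Cohen-Macaulay, and I would then proceed by induction on $d=\dim(R)$; the base case $d=0$ is precisely Proposition \ref{corr0} applied to $X$.

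For $d\geq 1$, prime avoidance supplies $x\in\fm-\fm^{2}$ that is a non-zerodivisor on $R$, on $C$, and (in case (ii)) on $M$, since $R$ is Cohen-Macaulay of positive dimension while $C$ and $M$ are maximal Cohen-Macaulay, so $\fm\notin\Ass(R)\cup\Ass(C)\cup\Ass(M)$. Setting $\overline{R}=R/xR$, the module $\overline{C}$ remains semidualizing over $\overline{R}$, and (\ref{modout}.2) gives $\Gxcid_{\overline{R}}(\overline{X})<\infty$. In case (ii), $\overline{M}$ is maximal Cohen-Macaulay over $\overline{R}$ and inherits strong-rigidity from $M$ (in the spirit of the reduction \cite[2.2]{CDtest} used in the proof of Proposition \ref{th3}), so the inductive hypothesis yields $\overline{C}$ dualizing over $\overline{R}$. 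In case (i), (\ref{Serre}) exhibits $\fm/xR$ as a direct summand of $\overline{\fm}=\fm/x\fm$, whence $\Gxcid_{\overline{R}}(\fm/xR)\leq\Gxcid_{\overline{R}}(\overline{\fm})<\infty$; since $\fm/xR$ is the maximal ideal of $\overline{R}$, the inductive hypothesis again forces $\overline{C}$ to be dualizing. In either case, the standard change-of-rings isomorphism $\Ext^{i}_{R}(k,C)\cong\Ext^{i-1}_{\overline{R}}(k,\overline{C})$, valid because $x$ is $R$- and $C$-regular, promotes $\id_{\overline{R}}(\overline{C})<\infty$ to $\id_{R}(C)<\infty$, so $C$ is dualizing over $R$.

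The main obstacle will be verifying that strong-rigidity descends to $\overline{M}$ in case (ii) and that $\Gxcid$-finiteness descends to the direct summand $\fm/xR$ of $\overline{\fm}$ in case (i). Both should fall out routinely from the definitions together with the trivial-extension identification $\overline{R}\ltimes\overline{C}\cong\overline{R\ltimes C}$ of (\ref{modout}.1); the substantive work is already packaged in the Artinian base case, which is precisely Proposition \ref{corr0}, and the induction is simply a dimension-reduction device that uniformly feeds (i) and (ii) into that base case.
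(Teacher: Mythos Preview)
Your proposal is correct and follows essentially the same route as the paper: establish that $R$ is Cohen--Macaulay via (\ref{Yass1}), induct on $d=\depth R=\dim R$, handle the Artinian base case with Proposition~\ref{corr0}, and for the inductive step reduce modulo a non-zerodivisor $x\in\fm-\fm^{2}$, using (\ref{Serre}) in case~(i) and the descent of strong-rigidity in case~(ii). One small correction: the descent of strong-rigidity from $M$ to $\overline{M}$ has nothing to do with the trivial-extension identification (\ref{modout}.1); rather, it uses the change-of-rings isomorphism $\Tor_i^{\overline{R}}(\overline{M},L)\cong\Tor_i^{R}(M,L)$ together with the implication $\pd_R(L)<\infty\Rightarrow\pd_{\overline{R}}(L)<\infty$ for $\overline{R}$-modules $L$, and the latter step requires precisely your hypothesis $x\notin\fm^{2}$ (see \cite[3.3.5(1)]{Av2}), as the paper makes explicit.
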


\begin{rmk} We already know from (\ref{Yass1}) that $R$ must be  Cohen-Macaulay in case (i) or (ii) holds in Proposition \ref{sonuc2}.
\end{rmk}

\begin{proof} [Proof of Proposition \ref{sonuc2}] We proceed by induction on $d=\depth R$. If $d=0$, then (\ref{Yass1}) gives the required conclusion for both case (i) and (ii). Hence we assume $d\geq 1$ and pick a non-zerodivisor $x$ on $R$ such that $x\in\fm-\fm^2$.

First suppose (i) holds, i.e., $\Gcid(\fm)<\infty$. It follows from (\ref{Serre}) that there exists an $\overline{R}$-module $N$ such that $\overline{\fm} \cong N \oplus\fm/xR$. Therefore we conclude from  (\ref{modout}.2) that $\Gid_{\overline{R\ltimes C}}(N \oplus\fm/xR)=\Gid_{\overline{R\ltimes C}}(\overline{\fm})<\infty$. Then \cite[2.6]{Henrik} implies that
$\Gid_{\overline{R\ltimes C}}(\fm/xR)<\infty$. Hence we obtain the following; see also (\ref{Yass1}) and (\ref{modout}.1).
\begin{equation} \notag{}
\Gid_{\overline{R\ltimes C}} (\fm/xR) = \Gid_{\overline{R} \ltimes \overline{C}}(\fm/xR)=\Gxcid_{\overline{R}}( \fm/xR)<\infty.
\end{equation}
Now the induction hypothesis forces $\overline{C}$ to be dualizing over $\overline{R}$, i.e., $\id_{\overline{R}}(\overline{C})<\infty$. Consequently $\id_{R}(C)<\infty$ and hence $C$ is dualizing over $R$.

Next assume (ii). Then it follows from (\ref{modout}.2) that $\Gxcid_{\overline{R}}(\overline{M})<\infty$. Moreover, since $M$ is maximal Cohen-Macaulay, $x$ is a  non-zerodivisor on $M$. We can easily observe, similar to \cite[2.2]{CDtest}, that $\overline{M}$ is strongly-rigid over $\overline{R}$: here we include an argument since \cite{CDtest} deals with only test modules; see also (\ref{Test}) and (\ref{stronglyrigid}).

Suppose $\Tor_{n}^{\overline{R}}(\overline{M},L)=0$ for some $\overline{R}$-module $L$, and for some positive integer $n$. Then, since $M$ is a strongly-rigid module over $R$, and  $\Tor_{i}^{\overline{R}}(\overline{M},L)\cong \Tor_{i}^{R}(M,L)$ for all $i\geq 0$, we see that $\pd_{R}(L)<\infty$. Now the fact $x\notin \mathfrak{m}^{2}$ implies that $\pd_{\overline{R}}(L)<\infty$; see \cite[3.3.5(1)]{Av2}. This proves that $\overline{M}$ is strongly-rigid over $\overline{R}$. Thus the induction hypothesis implies that $\overline{C}$ is dualizing over $\overline{R}$, and so $C$ is dualizing over $R$.
\end{proof}

Proposition \ref{sonuc2} naturally raises the following question; see (\ref{stronglyrigid}).

\begin{ques} Let $(R, \fm)$ be a local ring and let $C$ be a semidualizing $R$-module. If $\Gcid(M)<\infty$ for some nonzero strongly-rigid $R$-module $M$, then must $R$ be Cohen-Macaulay? What if $M$ is an integrally closed $\fm$-primary ideal?
\end{ques}

The \emph{Cohen-Macaulay injective dimension} $\CMid(M)$ \cite[2.3]{HJCMid} of an $R$-module $M$ is defined as $\inf\{\Gcid(M):\;$C$ \text{ is a semidualizing $R$-module} \}$; see also (\ref{Cgid}). 
In view of this notation, Proposition \ref{sonuc2} characterizes Cohen-Macaulay rings in terms of the finiteness of the Cohen-Macaulay injective dimension of the maximal ideal $\fm$, i.e., if $\CMid(\fm)<\infty$, then $R$ is Cohen-Macaulay with a dualizing module $C$.

\appendix

\section{Some examples of test and rigid-test modules}
There are quite a few examples of test and rigid-test modules in the literature. In this section we catalogue a few of them; see also \cite[1.4]{CDtest} for a characterization of test modules over complete intersection rings. We start by pointing out that test and Tor-rigid modules are distinct in general; see (\ref{Torrigid}) and (\ref{Test}). 

\begin{chunk} \label{withLong} Let $S=k[\![x,y,z]\!]$ be the formal power series over a field $k$, and let $R$ to be the subring of $S$ generated by monomials of degree $2$, that is, the $2$nd \emph{Veronese} subring $S$. Then $S=R \oplus_{R}L$, where $L$ generates the class group of $R$; see, for example \cite[3.16]{CeD2}. Since $S$ is a finite extension of $R$, \cite[2.4]{CDtest} shows that $L$ is a test module over $R$. 
On the other hand, 
by Dao's remark \cite[2.6]{Da4}, $L$ is not Tor-rigid.
\end{chunk}

Recall that all rigid-test modules are strongly-rigid; see (\ref{testrigid}) and (\ref{stronglyrigid}).

\begin{chunk} \label{inttest} If $I$ is an integrally closed $\mathfrak{m}$-primary ideal of $R$ and $\Tor^{R}_{n}(R/I, N)=0$, then $\pd(N)\leq n-1$, i.e., $\Omega^{i}(R/I)$ is a rigid-test module for all $i\geq 1$; see \cite[3.3]{CHKV}.
\end{chunk}

\begin{chunk} \label{rmk0} $\phantom{}$ Let $R$ be an F-finite local ring of prime characteristic $p$, and let $\varphi^n: R \to R$ be the nth iterate of the Frobenius endomorhism defined by $r\mapsto r^{p^n}$ for $r\in R$. If $M$ is a finitely generated $R$-module, $\up{\varphi^n}M$ denotes the (finitely generated) $R$-module $M$ with the $R$-action given by $r\cdot m=\varphi^n(r)m$.
\begin{enumerate}[\rm(i)]
\item $\up{\varphi^n}R$ is a test module over $R$ for all $n\gg 0$; see (\ref{Test}) and \cite[2.2.8]{Msurvey}.
\item If $R$ is a one-dimensional Cohen-Macaulay local ring, then $\up{\varphi^n}R$ is a rigid-test module over $R$ for all $n\gg 0$; see (\ref{testrigid})  and \cite[2.1.3 and 2.2.12]{Msurvey}.
\item If $\fm^{[p]}=0$, or $R$ is a complete intersection ring, then $\up{\varphi^n}R$ is a rigid-test module over $R$ for all $n\geq 1$; see (\ref{testrigid}) and \cite[2.2.9, 2.2.10 and 5.1.1]{Msurvey}.
\end{enumerate}
\end{chunk}

\begin{chunk} \label{onGolod} Let $R$ be a Golod ring (e.g., $R$ is a hypersurface). If $M$ is an $R$-module with $\pd(M)=\infty$, then $M$ is a test-module over $R$; see \cite[section 5]{Av2} and \cite[3.1]{JAB}.
\end{chunk}

\begin{chunk} \label{Gsing} Let $(R, \fm,k)$ be a two-dimensional complete normal local domain with an algebraically closed residue field $k$. Assume $R$ has a rational singularity, i.e., there exists a resolution of singularities $X \to \Spec(R)$, a proper birational morphism where $X$ is a regular scheme, such that $\HH^1(X, O_X)=0$; see \cite[6.32]{Book} or \cite{Lipman}. It follows that $R$ is a Cohen-Macaulay ring with minimal multiplicty; see, for example, \cite[6.36]{Book}. 
Thus $R$ is Golod \cite[5.2.8]{Av2}. Hence each $R$-module $M$ with $\pd(M)=\infty$ is a test module over $R$; see (\ref{onGolod}). In particular, if $R$ is not Gorenstein, then the dualizing module of $R$ is a test module.
\end{chunk}

\begin{chunk} \label{Long's result} Let $R=k[\![x_{1}, \ldots, x_{n}]\!]/(f)$, where $k$ is a field and $n\geq 3$. Assume $R$ has an isolated singularity, i.e., $R_{\frak{p}}$ is regular for all $\frak{p} \in \Spec(R)-\{\fm\}$. Let $M$ be an $R$-module. If $n=3$ (i.e., $\dim(R)=2$), or $\dim(M)\leq 1$ (e.g., $M$ has finite length), then $M$ is Tor-rigid; see \cite[2.10, 3.4 and 3.6]{Da1}.
\end {chunk}

A technical but rather important point for us is that a rigid-test module, unlike the residue field $k$, may have arbitrary depth and, even if its depth is zero, it does not have to have finite length in general. Here is such an example:

\begin{chunk} \label{egg2} Let $k$ be a field, $R=k[\![x,y,z]\!]/(xy-z^2)$ and $M=\fm/x\fm$. Then $M$ is a rigid-test module with $\depth(M)=0$ and $\dim(M)=1$; see (\ref{onGolod}) and (\ref{Long's result}).
\end{chunk}

The next result, in view of Dao \cite[2.10]{Da1}, was already known when $f$ is homogeneous. Recently Walker \cite{M.E.Walker} removed the homogeneity assumption; see \cite{MPSW1} and also the paragraph preceding \cite[1.2]{M.E.Walker}.

\begin{chunk} \label{Mark} Let $A=Q/(f)$, where $Q=k[x_{1}, \dots, x_{2s+1}]$, with $s\geq 1$, is a polynomial ring (with standard grading) over a perfect field $k$, and $f$ is a nonconstant polynomial in $Q$. Set $R=A_{\fm}$, where $\mathfrak{m}=(x_{1}, \dots, x_{2s+1})A$ and assume $A_{p}$ is a regular for all $\fp \in \Spec(A)-\{\fm\}$. Then all $R$-modules are Tor-rigid; see \cite[2.10]{Da1} and \cite[1.3]{M.E.Walker}.
\end{chunk}

\section{Remarks on two-dimensional rational singularities} In this section we prove an extension of Corollary \ref{corthmint}, and compare our conclusion to a vanishing result obtained in \cite{CeD2}. Furthermore we improve a result of Iyama and Wemyss \cite{IW} over two-dimensional rational singularities; see (\ref{IWobs}).

\begin{chunk} \label{dis1} Let $R$ be a rational singularity as in Corollary \ref{corthmint}, and let $M$ and $N$ be nonzero $R$-modules. Assume $N$ is Tor-rigid and $\pd(N)=\infty$. Then it follows that $N$ is a rigid-test module; see (\ref{onGolod}) and (\ref{Gsing}). Therefore, if $\Ext^n_{R}(M,N)=0$ for some $n\geq \depth(N)$, then Corollary \ref{corofthm-1} implies that $\pd(M)\leq n-1$.
\end{chunk}

If $R$ is as in Corollary \ref{corthmint}, any power of an integrally closed $\fm$-primary ideal is Tor-rigid, and has infinite projective dimension unless $R$ is regular; see the paragraph following Theorem \ref{thmint} and (\ref{inttest}). Therefore (\ref{dis1}) subsumes Corollary \ref{corthmint}.

The following vanishing result is from \cite{CeD2}:

\begin{chunk} (\cite[4.9]{CeD2}) \label{dis2} Let $R$ be a two-dimensional normal local domain such that the class group of $R$ is torsion. Let $M$ and $N$ be nonzero finitely generated $R$-modules. Assume $\CI(M)<\infty$.  Set $c=\cx(M)$, the complexity of $M$ (e.g., $c$ is the codimension of $R$); see \cite[4.2]{Av2}. If $\Ext^{n}_{R}(M,N)=\dots=\Ext^{n+c-1}_{R}(M,N)=0$ for some $n\geq  3-\depth(M)$, then $\Ext^{i}_{R}(M,N)=0$ for all $i\geq  3-\depth(M)$.
\end{chunk}

We will observe that, in characteristic zero, \cite[4.9]{CeD2}, stated as (\ref{dis2}), is essentially  concerned with a hypersurface ring. In the case where $R$ is a hypersurface ring, the conclusion $\Ext^{i}_{R}(M,N)=0$ for all $i\gg 0$ -- in fact $\pd(M)<\infty$ or $\pd(N)<\infty$ -- can be obtained from Corollary \ref{corofthm-1}.

\begin{chunk} \label{dis3} Assume $R$ is complete and contains an algebraically closed field $k$ of characteristic zero. Assume further that $R$ is as in (\ref{dis2}), or equivalently, $R$ is a rational singularity as in Corollary \ref{corthmint}; see \cite[2.6]{CeD2}. 

Then $R$ is Golod; see (\ref{Gsing}). Since $\CI(M)<\infty$, it follows that either $\pd(M)<\infty$, or $\pd(M)=\infty$ and $R$ is a hypersurface that is quotient of a power series ring over $k$ (rational double point); see \cite[5.3.3(2)]{Av2} and \cite[6.18 and 6.37]{Book}. If $\pd(M)<\infty$, then the vanishing result stated in (\ref{dis2}) is vacuous. So assume $\pd(M)=\infty$ and that $R$ is a rational double point. 

Suppose $\pd(N)=\infty$. Then $N$ is a rigid-test module; see (\ref{onGolod}) and (\ref{Long's result}). Hence, if $\Ext^n_{R}(M,N)=0$ for some $n\geq 2$, then Corollary \ref{corofthm-1} implies that $\pd(M)<\infty$. Thus $\pd(N)<\infty$ and $\Ext^{i}_{R}(M,N)=0$ for all $i\geq  3-\depth(M)$; see also \cite[4.7]{AvBu}.
\end{chunk}

Next our aim is to improve the following result of Iyama and Wemyss:

\begin{chunk} \label{IyWe} (Iyama and Wemyss \cite[3.1(1)]{IW}) Let $R$ be a two-dimensional complete normal local domain such that $k$ is algebraically closed and has characteristic zero. Assume $R$ has a rational singularity. Assume further $R$ is not Gorenstein. If $X$ is a maximal Cohen-Macaulay $R$-module such that $\Ext_R^1(X,R)=\Ext_R^2(X,R)=0$, then $X$ is free.
\end{chunk}

The result of Iyama and Wemyss, stated in (\ref{IyWe}), leads to the important result that the category of maximal Cohen-Macaulay $R$-modules does not contain any $n$-cluster tilting objects for $n\geq 3$; see \cite[4.15(3)]{IW} for details. In (\ref{IWobs}) we improve  (\ref{IyWe}) and show that the vanishing of $\Ext_R^1(X,R)$ is not necessary, i.e., we prove that $X$ is free provided that $\Ext_R^2(X,R)=0$.

The proof of (\ref{IyWe}), given in \cite[3.1(1)]{IW}, relies upon on a deep result of Christensen, Piepmeyer, Struli and Takahashi \cite[4.3]{MPSW1}. As we have already noted that two-dimensional rational singularities are Golod (\ref{Gsing}), we can reprove (\ref{IyWe}) by giving a short and straightforward argument without appealing to \cite{MPSW1}.

\begin{chunk} \label{arasonuc} Assume $R$ is as in (\ref{IyWe}) and let $X$ be a maximal Cohen-Macaulay $R$-module such that $\Ext_R^1(X,R)=\Ext_R^2(X,R)=0$. Iyama and Wemyss establishes in \cite[3.1(1)]{IW} that $\Ext_R^i(X,R)=0$ for all $i\geq 1$. We briefly explain their argument for completeness:

As $\Ext_R^1(X,R)=0$, one has $\Omega X \cong X^{\ast}$ where $X^{\ast}=\Hom(M,R)$; see \cite[2.2]{IW}. Similarly, since $\Ext_R^2(X,R)=\Ext_R^1(\Omega X,R)=\Ext_R^1(X^{\ast},R)=0$, it follows that $\Omega X^{\ast}\cong X^{\ast\ast}$. Notice $X$ is reflexive; see, for example, \cite[1.4.1(b)]{BH}. Thus there are exact sequences $0 \to X^{\ast} \to F \to X \to 0$ and $0 \to X \to G \to X^{\ast} \to 0$, where $F$ and $G$ are free $R$-modules. Consequently, using dimension shifting along those exact sequences, one concludes that $\Ext_R^i(X,R)=0$ for all $i\geq 1$.
\end{chunk}

\begin{chunk} \label{propCM} Let $R$ be a local ring, $M$ a nonzero test module and let $N$ be a nonzero  Cohen-Macaulay $R$-module. Assume $\Ext^{i}_{R}(M,N)=0$ for all $i\gg 0$. Then
$\id(N)<\infty$.
\end{chunk}

\begin{proof} We proceed by induction on $\dim(N)$. It follows from our assumption that $\Tor_i^R(M, \Hom(N,E))=0$ for all $i\gg0$, where $E$ is the injective hull of $k$. Hence the case where $\dim(N)=0$ folllows by Matlis duality. Next assume $\dim(N)\geq 1$, and choose $x\in R$ such that $x$ is a non zero-divisor on $N$.  The short exact sequence
$0\rightarrow N\stackrel{x}{\longrightarrow} N\rightarrow N/xN\rightarrow0$
induces the following long exact sequence: $$\cdots\rightarrow\Ext^i_R(M,N)\stackrel{x}{\longrightarrow}\Ext^i_R(M,N)\rightarrow\Ext^i_R(M,N/xN)
\rightarrow\cdots$$
Therefore $\Ext^i_R(M, N/xN)=0$ for
all $i\gg0$. It now follows from the induction hypothesis that
$\id (N/xN)<\infty$. Hence, applying $\Hom^i_R(k,-)$ to the short exact sequence above, we see that $\Ext^i_R(k,N)=0$ for all $i\gg 0$, i.e., $\id(N)<\infty$.
\end{proof}

\begin{proof}[Alternate proofs of (\ref{IyWe})] 
As $R$ is Golod and not Gorenstein, the vanishing of $\Ext_R^i(X,R)$ for all $i\geq 1$ implies that $\pd(X)<\infty$; see (\ref{arasonuc}) and \cite[1.4]{JorSega}. This forces $X$ to free as it is a maximal Cohen-Macaulay $R$-module.

As an alternate argument, one can use (\ref{propCM}): suppose $X$ is not free. Then, as $R$ is Golod, $X$ is a test module; see (\ref{onGolod}). Moreover $\Ext_R^i(X,R)=0$ for all $i\geq 1$; see (\ref{arasonuc}). It now follows from (\ref{propCM}) that $R$ is Gorenstein. Thus $X$ must be free.
\end{proof}

\begin{chunk}\label{ABresult} (Auslander and Bridger; see \cite[2.17 and 2.21]{AuBr}) Let $R$ be a Noetherian ring and let $X$ be a nonzero finitely generated $R$-module. Then there exists an exact sequence $0 \to F \to Y \to X \to 0$ of finitely generated $R$-modules, where $F$ is free, $Y=\Tr\Omega\Tr\Omega X$ and $\Ext^1_R(Y, R) =0$; see (\ref{AuBrsequence}).
\end{chunk}

\begin{chunk} \label{IWobs} Let $R$ be a two-dimensional rational singularity as in (\ref{IyWe}). Assume $X$ is a maximal Cohen-Macaulay $R$-module such that $\Ext_R^2(X,R)=0$. Then $X$ is free.
\end{chunk}

\begin{proof} There exists an exact sequence
$0 \to F \to Y \to X \to 0$, where $F$ is free and $Y=\Tr\Omega\Tr\Omega X$; see (\ref{ABresult}) and also (\ref{AuBrsequence}). This shows that $Y$ is maximal Cohen-Macaulay (since $X$ is maximal Cohen-Macaulay) and induces the long exact sequence:
$\cdots \to \Ext^2_R(X,R) \to \Ext^2_R(Y,R) \to \Ext^2_R(F,R) \to \cdots$. Hence $\Ext^2_R(Y,R)=0$. Furthermore, by \cite[2.17]{AuBr}, we have that $\Ext^1_R(Y, R) =0$. Therefore $\Ext^1_R(Y, R) =\Ext^2_R(Y,R)=0$. Applying (\ref{IyWe}), we conclude that $Y$ is free. Consequently, since $Y$ and $F$ is free, $X$ is free.
\end{proof}

An immediate consequence of (\ref{IWobs}) is:

\begin{chunk} Let $R$ be a rational singularity as in (\ref{IyWe}). If $X$ is an $R$-module such that $\Ext_R^n(X,R)=0$ for some $n\geq 4$, then $\pd(X)<\infty$.
\end{chunk}

\section*{Acknowledgments}
We are grateful to Luchezar Avramov, David Buchsbaum, Kamran Divaani-Aazar, Henrik Holm, Li Liang, Saeed Nasseh, Greg Piepmeyer, William Sanders, Alex Tchernev, Sean Sather-Wagstaff, Yang Zheng, Ryo Takahashi and Mark E. Walker for valuable information and discussions related to this work during its preparation.

\def\cfudot#1{\ifmmode\setbox7\hbox{$\accent"5E#1$}\else
  \setbox7\hbox{\accent"5E#1}\penalty 10000\relax\fi\raise 1\ht7
  \hbox{\raise.1ex\hbox to 1\wd7{\hss.\hss}}\penalty 10000 \hskip-1\wd7\penalty
  10000\box7}

\end{document}